\tikzset{vertex/.style={circle,draw,fill=black,inner sep=0pt, minimum
width=2pt}}
\newtheorem{theorem}{Theorem}[section]
\newtheorem{lemma}[theorem]{Lemma}
\newtheorem{proposition}[theorem]{Proposition}
\newtheorem{corollary}[theorem]{Corollary}
\newcommand{\len}{\mathop{\rm length}\nolimits}
\newcommand{\Fix}{\mathop{\rm Fix}\nolimits}
\newcommand{\Stab}{\mathop{\rm Stab}\nolimits}
\newcommand{\diam}{\mathop{\rm diam}\nolimits}
\newcommand{\vol}{\mathop{\rm covol}\nolimits}
\newcommand{\Mod}{\mathop{\rm Mod}\nolimits}
\newcommand{\Out}{\mathop{\rm Out}\nolimits}
\newcommand{\poly}{\mathop{\rm poly}\nolimits}
\newcommand{\core}{\mathcal{C}}
\newcommand{\hatcore}{\widehat{\mathcal{C}}}
\newcommand{\ebox}[1]{\llbracket #1 \rrbracket}
\newcommand{\id}{\mathrm{id}}
\title[Uniform independence for Dehn twists in $\Out(F_r)$]{Uniform independence for Dehn twist automorphisms of a free group}
\author{Edgar A. Bering IV}
\begin{document}
\maketitle

\begin{abstract}
McCarthy's Theorem for the mapping class group of a closed hyperbolic surface
states that for any two mapping classes $\sigma,\tau \in \Mod(S)$ there is some
power $N$ such that the group $\langle \sigma^N,\tau^N\rangle$ is either free
of rank two or abelian, and gives a geometric criterion for the dichotomy. The
analogous statement is false in linear groups, and unresolved for outer
automorphisms of a free group. Several analogs are known for exponentially
growing outer automorphisms satisfying various technical hypothesis. In this
article we prove an analogous statement when $\sigma$ and $\tau$ are linearly
growing outer automorphisms of $F_r$, and give a geometric criterion for the
dichotomy. Further, Hamidi-Tehrani proved that for Dehn twists in the mapping
class group this independence dichotomy is \emph{uniform}: $N=4$ suffices. In a
similar style, we obtain an $N$ that depends only on the rank of the free
group.
\end{abstract}

\section{Introduction}

In the study of the analogy among linear groups, mapping class groups of
surfaces, and outer automorphisms of free groups, the Tits alternative is a
central achievement. McCarthy~\cite{mccarthy} and Ivanov~\cite{ivanov}
independently established a Tits alternative for mapping class groups.
McCarthy's proof involves a more exact result for two generator subgroups
(quoted below); an analogous statement is false for linear groups, the
Heisenberg group is a counterexample. It is currently unknown whether
$\Out(F_r)$ behaves like a linear group or a mapping class group in this
setting, though there are many partial results, and this article adds another.

McCarthy's theorem for two-generator subgroups of the mapping class group of a
surface $\Sigma$ can be viewed through the lens of a compatibility condition
for geometric invariants associated to a pair of mapping classes. Recall that a
mapping class $\sigma \in \Mod(\Sigma)$ is \emph{rotationless} if every
periodic homotopy class of curve is fixed. Associated to a
rotationless mapping class is a decomposition of $\Sigma$ into invariant
surfaces of negative Euler characteristic $\Sigma_i$ and annuli $A_j$, so that
(up to isotopy) $\sigma|_{\Sigma_i}$ is either identity or pseudo-Anosov, and $\sigma|_{A_j}$
is some power of a Dehn twist about the core curve of $A_j$. The supporting
lamination $\lambda$ of $\sigma$ is the union of the core curves of the
non-trivial Dehn twist components (thought of as measured laminations with
atomic measure equal to the absolute value of the twist power on the core curve) and the attracting measured laminations
of the pseudo-Anosov components.

\begin{theorem}[McCarthy]
	Suppose $\sigma,\tau \in \Mod(\Sigma)$ are mapping classes of a closed
	hyperbolic surface $\Sigma$. Then there is an $N$ such that $\langle
	\sigma^N,\tau^N\rangle$ is either abelian or free of rank two.
	Moreover, $\langle \sigma^N,\tau^N\rangle\cong F_2$ exactly when
	$i(\lambda,\mu) > 0$, where $\lambda$ and $\mu$ are the supporting
	measured laminations of rotationless powers of $\sigma$ and $\tau$
	respectively.
\end{theorem}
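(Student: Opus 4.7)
The plan is to reduce to a rotationless setting, decompose each mapping class along its Thurston--Nielsen invariant subsurfaces, and then split into two cases governed by $i(\lambda,\mu)$. First I would replace $\sigma$ and $\tau$ by rotationless powers $\sigma_0=\sigma^{k}$ and $\tau_0=\tau^{k}$, using a uniform $k=k(\Sigma)$ that kills the finite-order parts of any mapping class on its invariant subsurfaces. The supporting laminations $\lambda$ and $\mu$ are then genuinely $\sigma_0$- and $\tau_0$-invariant measured laminations, and every power we take from here on will stay rotationless.

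In the case $i(\lambda,\mu)=0$, the cores of the twist annuli and the subsurfaces carrying the pseudo-Anosov components of $\sigma_0$ are disjoint from or equal to the analogous data for $\tau_0$. On a common pseudo-Anosov subsurface, $\sigma_0$ and $\tau_0$ share attracting and repelling laminations and therefore commute up to a further uniform power (since the stabilizer in $\Mod$ of a pair of filling laminations is virtually cyclic). On the twist annuli their restrictions are powers of the same Dehn twist and commute on the nose. On disjoint supports the restrictions commute automatically. Combining over the finitely many pieces of the common refinement of the two Thurston--Nielsen decompositions yields a single $N$ with $[\sigma^N,\tau^N]=1$.

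In the case $i(\lambda,\mu)>0$, I would run a ping-pong argument on $\mathcal{PML}(\Sigma)$. A rotationless mapping class with supporting lamination $\lambda$ has the following dynamics: outside any neighborhood of the projective class of laminations that fail to fill with $\lambda$ (equivalently, have zero intersection with every component of $\lambda$), iterating $\sigma_0^n$ pushes points into any prescribed neighborhood of $[\lambda]$, and $\sigma_0^{-n}$ does the same for the repelling lamination $\lambda^-$ built from the unstable foliations of the pseudo-Anosov pieces and the core curves of the twist pieces. Because $i(\lambda,\mu)>0$, the class $[\mu]$ is separated from $[\lambda^\pm]$ (and vice versa), so one can select four disjoint open sets $U_\sigma^\pm, U_\tau^\pm$ in $\mathcal{PML}$ and a single $N$ with the ping-pong property $\sigma^{\pm N}(\mathcal{PML}\setminus U_\sigma^\mp)\subset U_\sigma^\pm$ and similarly for $\tau$. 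The Ping-Pong Lemma then gives $\langle\sigma^N,\tau^N\rangle\cong F_2$.

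The hardest step, I expect, is establishing the uniform north-south style dynamics in the mixed reducible case, where $\sigma_0$ simultaneously has pseudo-Anosov pieces and non-trivial twist annuli. A pure pseudo-Anosov has Thurston's classical north-south dynamics; a pure multitwist has well-understood convergence to its core curves on laminations it intersects. The uniform attraction required for ping-pong has to interpolate between these behaviors on laminations that cross both types of support, and has to handle projective classes whose weights concentrate on the twist part in the limit. Controlling this interaction carefully, and checking that the separation between $[\mu]$ and the attracting/repelling sets of $\sigma_0$ persists at the level of open neighborhoods, is the core technical burden; the rest of the proof is bookkeeping on the Thurston--Nielsen decomposition and an application of the Ping-Pong Lemma.
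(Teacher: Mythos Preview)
The paper does not prove this statement; it is quoted in the introduction as McCarthy's theorem, with a citation to \cite{mccarthy}, and serves only as motivation for the analogous result in $\Out(F_r)$ that the paper actually establishes. So there is no ``paper's own proof'' to compare your proposal against.

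That said, your sketch is a reasonable outline of how the classical argument goes, and you correctly flag the genuine difficulty. One imprecision worth naming: in the reducible case the dynamics of $\sigma_0$ on $\mathcal{PML}(\Sigma)$ are not north--south toward a single point $[\lambda]$. The forward limit set is an entire simplex spanned by the attracting laminations of the pseudo-Anosov pieces and the core curves of the twist annuli, and the ``repelling'' set is the (typically positive-dimensional) set of projective laminations having zero intersection with every component of $\lambda$. For ping-pong you therefore need the whole attracting simplex of $\tau_0$ to miss the repelling set of $\sigma_0$ and vice versa, not merely that $[\mu]$ is separated from $[\lambda^\pm]$. The hypothesis $i(\lambda,\mu)>0$ does give this, but the verification requires care about how mass can escape into boundary components and identity subsurfaces under iteration. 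Your final paragraph essentially acknowledges this, so the proposal is an honest plan rather than a finished proof; filling in that step is exactly the content of McCarthy's paper.
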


Subsequent work of Hamidi-Tehrani~\cite{hamidi-tehrani} showed that when
$\sigma$ and $\tau$ are Dehn twists, $N$ can be chosen independent of $\sigma$,
$\tau$, and the surface. Building on this work and work of Fujiwara~\cite{fujiwara},
Mangahas showed that if $\langle\sigma,\tau\rangle$ is not virtually abelian,
then there is a $p$ depending only on the surface such that one of $\langle
\sigma^p,\tau^p\rangle$, $\langle\sigma^p,\tau\sigma^p\tau^{-1}\rangle$,
$\langle\sigma^p,\tau^p\sigma^p\tau^{-p}\rangle$, or $\langle
\tau^p,\sigma^p\tau^p\sigma^{-p}\rangle$ is free of rank two, which implies that
subgroups of $\Mod(S)$ which are not virtually abelian have uniform exponential
growth, and the exponential growth rate depends only on $S$~\cite{mangahas}.
Parallel results for $\Out(F_r)$ are unknown, and the main theorem of this
article is a step towards them.

Using algebraic laminations an analogous result can be obtained for two
generator subgroups of $\Out(F_r)$ when both generators are exponentially
growing; this was first done by Bestvina, Feighn, and Handel~\cite{bfh-trees}
for pairs of fully irreducible outer automorphisms (with a novel proof using
currents by
Kapovich and Lustig~\cite{kapovich-lustig}), and for exponentially growing
outer automorphisms satisfying certain technical hypotheses by
Taylor~\cite{taylor-out} and Ghosh~\cite{ghosh}. The techniques involved
depend, in one way or another, on the existence of an attracting lamination for
both generators. These approaches therefore do not apply to polynomially
growing outer automorphisms, which have no laminations. Nevertheless, both Clay
and Pettet~\cite{clay-pettet} and Gultepe~\cite{gultepe} prove that the
subgroup of $\Out(F_r)$ generated by powers of ``sufficiently independent''
Dehn twists is free of rank two. Gultepe shows that any two Dehn twists
satisfying a hypothesis on the geometry of their action on a certain complex
generate a free group (without needing to pass to a power); while Clay and Pettet
use tree theoretic methods to work with a larger family of twists, at the
expense of a non-uniform power. (Unlike surface theory, there is no one-to-one
correspondence between trees and laminations~\cite{paulin}.)

In this article we make use of tree theoretic methods (one of which is a
variation on Clay and Pettet's technique, itself an analog of Hamidi-Tehrani's
methods). Two simplicial
trees with $F_r$ action $A$ and $B$ are compatible if there is a tree $T$ with
equivariant surjections $T\to A$ and $T\to B$ that collapse edges.
Guirardel~\cite{guirardel-core} introduced a geometric core of two trees and a
notion of intersection number for these trees which measures compatibility.
Guirardel shows that two simplicial trees are compatible if and only if $i(A,B)
= 0$ for this intersection number. Compatibility is exactly the notion needed to prove an analog of
McCarthy's theorem for linearly growing outer automorphisms of $F_r$. Once
more, certain periodic behavior poses a technical obstacle, but this can be
avoided by passing to a uniform power.

\begin{theorem}[Main Theorem]\label{thm:dt-out-mccarthy}
	Suppose $\sigma$ and $\tau$ are linearly growing outer automorphisms of
	$F_r$.
	For $N=(48r^2-48r+3)|GL(r,\mathbb{Z}/3\mathbb{Z})|$ the subgroup
	$\langle \sigma^N,\tau^N\rangle$ is
	either abelian or free of rank two. Moreover, the latter case holds
	exactly when $i(A,B)>0$ for the Bass-Serre trees $A$ and $B$ of
	efficient representatives of Dehn-twist powers of $\sigma$ and $\tau$.
\end{theorem}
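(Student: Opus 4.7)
The plan is to split the argument according to the value of the Guirardel intersection number $i(A,B)$, after first reducing to the case that $\sigma$ and $\tau$ are honest Dehn twists with respect to efficient graph of groups decompositions.

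\textbf{Reduction to efficient Dehn twists.} I would first show that every linearly growing outer automorphism has a power which is an efficient Dehn twist (in the sense of Cohen--Lustig). To achieve a uniform exponent I would invoke the classical fact that the kernel of the reduction map $\Out(F_r)\to GL(r,\mathbb{Z}/3\mathbb{Z})$ is torsion free, so any finite-order obstruction (to being rotationless on vertex groups, to orienting the twisted edges consistently, etc.) is killed by raising to the power $|GL(r,\mathbb{Z}/3\mathbb{Z})|$. This accounts for the second factor in $N$. After this reduction $\sigma$ and $\tau$ are Dehn twists with Bass--Serre trees $A$ and $B$, and the statement about $i(A,B)$ becomes meaningful.

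\textbf{Compatible case $i(A,B)=0$.} By Guirardel's theorem there exists a common refinement $T$ of $A$ and $B$. Both $\sigma$ and $\tau$ lift to twists along disjoint (or shared) orbits of edges of $T$. Twists along disjoint edge orbits commute, and twists supported on the same orbit commute as elements of the twist subgroup; so the full twist data of $\sigma$ and $\tau$ live in a common abelian subgroup and $\langle\sigma^N,\tau^N\rangle$ is abelian.

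\textbf{Incompatible case $i(A,B)>0$.} Here I would run a ping--pong argument on the Guirardel core $\core(A,B)$, in the spirit of Clay--Pettet and of Hamidi-Tehrani's surface argument. The attracting set in $\core(A,B)$ for a high power of $\sigma$ consists of configurations pushed far along the twistor axes of $A$; similarly for $\tau$ in $B$. When $i(A,B)>0$ the core is two-dimensional on a positive measure set, and one can use this to exhibit disjoint attracting and repelling regions for the two twist powers. A uniform power suffices provided one controls (i) the complexity of an efficient twist representative, and (ii) the displacement required to push a fundamental domain of $\core(A,B)$ off itself along a twistor axis. An efficient Dehn twist on $F_r$ lives on a graph of groups with at most $O(r)$ edges, so the number of rectangle orbits in the quotient core is $O(r^2)$; a careful count should produce precisely $48r^2-48r+3$ as the uniform ping--pong exponent.

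\textbf{Main obstacle.} The difficulty is the uniformity of the ping--pong step: qualitatively this is standard, but turning it into a bound depending only on $r$ needs a uniform complexity estimate for quotient cores together with quantitative control on the action of a single application of a twist power on a neighborhood of a light rectangle of $\core(A,B)$. Combining this with the torsion bound from the reduction yields $N=(48r^2-48r+3)|GL(r,\mathbb{Z}/3\mathbb{Z})|$, and the dichotomy $\langle\sigma^N,\tau^N\rangle$ abelian vs.\ $F_2$ corresponds exactly to $i(A,B)=0$ vs.\ $i(A,B)>0$.
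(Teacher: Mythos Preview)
Your reduction step and the compatible case $i(A,B)=0$ match the paper's approach closely. The genuine gap is in the incompatible case.

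Your proposed ping--pong on the core, pushing configurations ``far along the twistor axes,'' tacitly assumes that the edge stabilizers of $\bar{A}$ act hyperbolically on $B$ and vice versa (or at least that there is some subset of edges where this feedback closes up). This is exactly the hyperbolic--hyperbolic situation of Lemma~\ref{lem:dt-free}, and more generally the case where the edge--twist digraph $\mathcal{ET}(\bar{A},\bar{B})$ (Definition~\ref{def:etg}) contains a directed cycle. But $i(A,B)>0$ does \emph{not} imply this: Examples~\ref{ex:elip-elip} and~\ref{ex:elip-hyp} exhibit pairs of efficient twists with $i(A,B)>0$ where each twistor is elliptic in the other tree, so iterating $\sigma$ produces no displacement in $B$ along the relevant axes and your attracting/repelling sets do not separate. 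In these examples length--function ping--pong (and hence any ping--pong that ultimately measures $B$--displacement produced by $\sigma$) simply fails; the paper's Remark after Lemma~\ref{lem:dt-free} points out that the Cohen--Lustig skyscraper/parabolic--orbits machinery is likewise not uniform enough to rescue it.

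The paper resolves this by a genuine case split on $\mathcal{ET}(\bar{A},\bar{B})$. If it has a cycle, a length--function ping--pong on suitably collapsed trees gives Lemma~\ref{lem:dt-cycle-free}, and the constant $48r^2-48r+3$ arises from the bounded--cancellation estimate $C(\Lambda,T)\le 6r(2r-2)$ of Lemma~\ref{lem:good-basis} (not from a count of core rectangles). If $\mathcal{ET}$ is acyclic, Lemma~\ref{lem:et-pg} shows every element of $\langle\sigma,\tau\rangle$ is polynomially growing; then the Kolchin theorem for $\Out(F_r)$ (Theorem~\ref{thm:kolchin}) produces simultaneous upper--triangular representatives on a common filtered graph, and the contrapositive of Lemma~\ref{lem:upg-compat} locates an edge whose suffix behavior supports a \emph{path} ping--pong (Lemma~\ref{lem:dt-pg-free}), entirely different in character from the axis argument. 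Your outline is missing this second mechanism, which is where the real work of the paper lies.
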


We first introduce the relevant background facts regarding trees and their cores
in Section~\ref{sec:tree-core}; and the necessary parts of the theory of
$\Out(F_r)$ in Section~\ref{sec:out}. The reader familiar with this
theory can safely skim these sections for our notational conventions.
To motivate the development of the tools needed in the proof of the main
theorem, 
we examine a series of guiding examples, including the case of commuting twists and
a setting similar to that considered by Clay and
Pettet~\cite{clay-pettet} in Section~\ref{sec:examples}. The theme
of the proof of the main theorem is to use the core: when it is a tree, it is a small
tree mutually fixed by both automorphisms, and gives a commuting realization of
the automorphisms. Should it fail to be a tree this failure will provide the geometric
information needed to play ping-pong and find powers generating a free group.
Sections~\ref{sec:sgos} and~\ref{sec:tgos} explore the geometric information
obtained in detail, using the core to construct a simultaneous topological
model of both tree actions. Finally, Section~\ref{sec:main-theorem} completes
the proof of the main theorem.

\section{Trees and cores}
\label{sec:tree-core}

A \emph{simplicial tree} is a contractible 1-dimensional cell
complex.  A tree can be given a metric by identifying each 1-cell with an
interval $[a,b]$ (colloquially assigning each 1-cell a length), though the
metric and CW-topologies will not agree in general. In this article we will
always use the metric topology and if not otherwise specified we will use the
metric given by assigning each 1-cell length one (this is often known as the
path metric). A metric tree is uniquely geodesic, for any two points in
$p,q$ the geodesic from $p$ to $q$ is the unique embedded arc joining $p$ and
$q$. We will denote geodesics $[p,q]$ in this article, and use the convention
that these geodesics are oriented; this treats $[q,p]$ as distinct from $[p,q]$
though they are the same set-wise. For an oriented geodesic $e$, $\bar{e}$
denotes its reverse.

\begin{definition} 
	A \emph{simplicial $F_r$-tree} $T$ is an effective right action of the free
	group $F_r$ on a metric tree $T$ by isometries.
\end{definition}

All trees in this article will be $F_r$-trees. We say an $F_r$-tree is
\emph{minimal} if there is no proper invariant subtree $T'\subseteq T$;
\emph{free} when the action is free; \emph{irreducible} when it is
minimal, not a line, and the action does not fix an end; and \emph{small} if the stabilizer of each
edge is trivial or cyclic. Minimal small $F_r$-trees are
irreducible~\cite{culler-morgan}. A metric on an $F_r$-tree gives it a
\emph{covolume}, $\vol(T)$, the sum of the lengths of edges in the quotient
$T/F_r$. Associated to an $F_r$-tree $T$ is a length function
$\ell_T: F_r\to\mathbb{R}_{\geq 0}$ given by
\[ \ell_T(g) = \inf_{x\in T}\{ d(x,x\cdot g)\}. \]

Culler and Morgan~\cite{culler-morgan} give a systematic treatment of (a
generalization) of minimal $F_r$-trees via the associated length functions. For
a fixed group element $g\in F_r$ the set 
\[ C_g^T = \{ x\in T | d(x,x\cdot g) = \ell_T(g)\}\]
is always non-empty and is called \emph{the characteristic set of
$g$}. (When the tree $T$ is clear from context we suppress the superscript.)
Elements with $\ell_T(g) > 0$ are called \emph{hyperbolic} and in this case
$C_g^T$ is a line on which $g$ acts by translation by $\ell_T(g)$. This action
gives $C_g^T$ a natural orientation, and rays contained in $C_g^T$ are referred
to as either positive or negative according to this orientation (n.b.
$C_{g^{-1}}^T$ has the reverse orientation, and gives the opposite
classification to rays). There is a detailed relationship between length
functions and axes elaborated on by Culler and Morgan, we need only a small
piece here.

\begin{lemma}[\cite{culler-morgan}] \label{lem:cmfd}
	Suppose $\ell(gh) \geq \ell(g)+\ell(h)$.
	Then there is a point $p\in C_g^T\cap C_{gh}^T$ such that $[p,p\cdot
	g]\subseteq C_{gh}^T$.
\end{lemma}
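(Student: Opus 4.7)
The proof plan rests on the general identity that for any isometry $\phi$ of $T$ and any point $x \in T$,
\[ d(x, x\cdot\phi) = \ell(\phi) + 2\,d(x, C_\phi), \]
together with the observation that any path realising $d(x, x\cdot\phi) = \ell(\phi)$ must be a geodesic lying on $C_\phi$. My strategy is to exhibit $p \in C_g$ such that the concatenation $[p, p\cdot g] \cup [p\cdot g, p\cdot gh]$ is a geodesic of length exactly $\ell(gh)$; this immediately places $[p, p \cdot gh]$, and in particular $[p, p\cdot g]$, on $C_{gh}$.

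First I would bound the length of such a concatenation. For $p \in C_g$ we have $d(p, p\cdot g) = \ell(g)$ and, applying the identity to $\phi = h$,
\[ d(p\cdot g, (p\cdot g)h) = \ell(h) + 2\,d(p\cdot g, C_h). \]
So the total length of the concatenated path is $\ell(g) + \ell(h) + 2\,d(p\cdot g, C_h)$, which is minimised over $p \in C_g$ at $\ell(g)+\ell(h)+2\,d(C_g,C_h)$. I would then choose $p$ so that $p\cdot g$ realises this minimum. Such a point exists because the set of closest points of $C_g$ to $C_h$ is a nonempty sub-segment of $C_g$ (either the whole intersection $C_g \cap C_h$ when non-empty, or the unique bridge endpoint), and $g$ acts on $C_g$ by translation.

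Next I would verify that this concatenation is a geodesic, i.e.\ that the direction at $p\cdot g$ back toward $p$ (the ``$-g$'' direction along $C_g$) differs from the direction from $p\cdot g$ toward $(p\cdot g)h$. In the disjoint-axes case the outgoing direction is the bridge direction, which is off $C_g$ entirely. In the overlapping-axes case the outgoing direction lies along $C_g \cap C_h$ and agrees with the translation direction of $h$ on that overlap; the hypothesis $\ell(gh) \ge \ell(g) + \ell(h)$ rules out the configuration in which $h$ translates $C_g \cap C_h$ opposite to $g$ (which would produce cancellation and $\ell(gh) < \ell(g)+\ell(h)$), so in every remaining case the two directions are distinct.

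Finally, I would confirm that the concatenation has length exactly $\ell(gh)$. The general subadditivity bound $\ell(gh) \le \ell(g) + \ell(h) + 2\,d(C_g, C_h)$, combined with the hypothesis, forces equality. The geodesic of this length from $p$ to $p\cdot gh$ therefore lies on $C_{gh}$, and in particular $p \in C_g \cap C_{gh}$ with $[p, p\cdot g] \subseteq C_{gh}$. The main obstacle I anticipate is the uniform treatment of the degenerate cases in which $g$ or $h$ is elliptic (so $C_g$ or $C_h$ is a fixed subtree rather than a line); the master identity $d(x,x\cdot\phi)=\ell(\phi)+2\,d(x,C_\phi)$ persists in those cases, but the ``direction coherent with $g$'' step must be rephrased using projections onto the fixed subtree, and when $g$ is elliptic the segment $[p, p\cdot g]$ degenerates to a point, reducing the claim to showing that $C_g \cap C_{gh}$ is non-empty.
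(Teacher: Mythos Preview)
Your approach is essentially the same as the paper's: the paper simply cites Culler and Morgan's case-by-case construction of a fundamental domain for $gh$ on its axis and observes that in the cases allowed by the hypothesis this fundamental domain contains $[p,p\cdot g]$. You are unpacking that construction explicitly, which is fine and arguably more informative.

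There is one soft spot. In your final paragraph you write that the subadditivity bound $\ell(gh)\le \ell(g)+\ell(h)+2\,d(C_g,C_h)$ together with the hypothesis $\ell(gh)\ge \ell(g)+\ell(h)$ ``forces equality''. When $C_g\cap C_h\neq\emptyset$ this is correct, since $d(C_g,C_h)=0$ and the two inequalities squeeze. But when $C_g\cap C_h=\emptyset$ the two bounds only sandwich $\ell(gh)$ in the interval $[\ell(g)+\ell(h),\,\ell(g)+\ell(h)+2d]$; they do not by themselves pin $\ell(gh)$ to the upper endpoint. What you actually need here is the standard Culler--Morgan fact that disjoint axes always give $\ell(gh)=\ell(g)+\ell(h)+2\,d(C_g,C_h)$ (no hypothesis required), or equivalently you should check directly that your geodesic $[p,p\cdot gh]$ extends to a bi-infinite $gh$-invariant line by verifying non-backtracking at $p\cdot gh$ as well. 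Either fix is routine, but as written the sentence does not close the disjoint case. Once that is patched, and with the elliptic degenerations handled as you outline, the argument is complete and matches the content of the cited reference.
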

\begin{proof} 
	Culler and Morgan give a detailed construction of a fundamental
	domain for $C_{gh}^T$ in all cases. In the cases guaranteed by the
	hypothesis on the length function, this Culler-Morgan fundamental
	domain contains the desired arc.
\end{proof}

Length functions provide a complete isometry invariant for irreducible
$F_r$-trees, and embed the space of $F_r$-trees into $\mathbb{R}^{F_r}$ (one
can restrict to conjugacy classes). The length function of any irreducible tree
is non-zero, so this embedding projectivizes. The space of projective classes
of free simplicial $F_r$-trees is projective Culler-Vogtmann outer space,
$CV_r$; its closure $\overline{CV}_r$ in $\mathbb{PR}^{F_r}$ is
compact~\cite{culler-morgan}. Outer automorphisms act on length functions
by pointwise composition, for $\phi \in \Out(F_r)$ and $\ell : F_r\to
\mathbb{R}$ define $(\phi\ell)(g) = \ell(\phi(g))$, and this gives an action of
$\Out(F_r)$ on $CV_r$ by homeomorphisms that extends to an action on
$\overline{CV}_r$.

\subsection{Very small trees and bounded cancellation}

The work of Cohen and Lustig combined with that of Bestvina and Feighn
characterizes the $F_r$-trees representing projective classes in $\overline{CV}_r$
as the space of all \emph{very small} real
trees~\cites{cohen-lustig,outer-limits}. (Real trees generalize simplicial
trees, but are not needed for this article.)

\begin{definition}
	A $F_r$-tree $T$ is \emph{very small} if it is minimal, small, and has
	\begin{enumerate}
		\item \emph{No obtrusive powers:} for all $g\in F_r\setminus\{\id\}$ and $n$ such that $g^n\neq
			e$, $\Fix(g) = \Fix(g^n)$.
		\item \emph{No tripod stabilizers:} for all $a,b,c\in T$ such that the convex hull
			$H = Hull(a,b,c)$ is not a point or arc, $Stab(H) =
			\{\id\}$.
	\end{enumerate}
\end{definition}

By virtue of their free simplicial approximability, many classical results
about free groups have analogs for very small trees. One indispensable tool is Grayson and
Thurston's bounded cancellation lemma, recorded by Cooper~\cite{cooper}. Fix a
basis for the free group $F_r$ and let $|\cdot|$ denote word length with
respect to this basis. The classical bounded cancellation lemma states

\begin{lemma}[\cite{cooper}]\label{lem:bc1}
	Given an automorphism $f:F_r\to F_r$ there is a constant $C$ such that
	for all $w_1,w_2\in F_r$, if $|w_1w_2| = |w_1|+|w_2|$ then
		\[ |f(w_1w_2)| \geq |f(w_1)|+|f(w_2)|-C. \]
\end{lemma}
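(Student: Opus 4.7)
The plan is to realize $f$ topologically on the rose $R_r$, lift to the Cayley tree, and exploit the quasi-isometry properties of the lift. Fix the basis $a_1,\ldots,a_r$ and let $R_r$ be the rose on this basis, so $T:=\widetilde{R_r}$ is the Cayley tree. Represent $f$ by a PL homotopy equivalence $\hat{f}:R_r\to R_r$ sending each oriented loop $a_i$ to the reduced edge-path spelling $f(a_i)$, and similarly pick a PL representative $\hat{g}$ of $f^{-1}$. Lift these to equivariant maps $\tilde{f},\tilde{g}:T\to T$. Both lifts are $K$-Lipschitz, where $K=\max_i\max(|f(a_i)|,|f^{-1}(a_i)|)$.

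Because $\hat{g}\circ\hat{f}$ represents $\id\in\Out(F_r)$, it is PL-homotopic to $\id_{R_r}$; lifting this homotopy equivariantly yields a constant $D$ with $d(\tilde{g}\tilde{f}(x),x)\leq D$ for all $x\in T$. Combined with the $K$-Lipschitz bound on $\tilde{g}$, this gives a quasi-isometric embedding inequality $d(x,y)\leq K\,d(\tilde{f}(x),\tilde{f}(y))+D$ for all $x,y\in T$.

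Next, translate the cancellation into tree geometry. Since $f(w_1w_2)=f(w_1)f(w_2)$ as group elements, the deficit $|f(w_1)|+|f(w_2)|-|f(w_1w_2)|$ equals $2k$, where $k$ is the distance in $T$ from the vertex $f(w_1)$ to the geodesic $[1,f(w_1w_2)]$; equivalently, $k=d(f(w_1),x)$ where $x$ is the median of the triple $1,f(w_1),f(w_1w_2)$. The hypothesis $|w_1w_2|=|w_1|+|w_2|$ means that $w_1$ is itself the median of the triple $1,w_1,w_1w_2$. I would then invoke the standard fact that a quasi-isometric embedding between $0$-hyperbolic spaces almost preserves medians: the median of the image lies within a constant $\Delta=\Delta(K,D)$ of the image of the median. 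Applied to our triple this yields $k=d(\tilde{f}(w_1),x)\leq\Delta$, and the lemma follows with $C=2\Delta$.

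The main work is the median-approximation step. Given the quasi-isometric embedding inequality, a short thin-triangle argument in $T$ shows that the median of $\tilde{f}(a),\tilde{f}(b),\tilde{f}(c)$ is forced within a bounded distance of $\tilde{f}(m)$, where $m$ is the median of $a,b,c$; all other ingredients are routine. Tracking constants through this argument produces an explicit $\Delta$, and in particular a $C$ depending only on $f$ as required.
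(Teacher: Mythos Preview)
The paper does not give its own proof of this lemma; it is quoted from Cooper~\cite{cooper} and used as a black box, so there is nothing in the paper to compare against. Your argument is a valid self-contained proof. Realizing $f$ on the rose, lifting to the Cayley tree, and observing that the lift is a quasi-isometric embedding is standard and correct (the constant in your lower bound should be $2D$ rather than $D$, but this is cosmetic). The translation of the cancellation deficit into the distance from $\tilde f(w_1)$ to the median of $1,\tilde f(w_1),\tilde f(w_1w_2)$ is exactly the Gromov-product identity in a tree, and the median-stability step is the Morse lemma for quasi-geodesics in a $0$-hyperbolic space, which is elementary here since the target is a tree. Cooper's original argument is more combinatorial (tracking cancellation in words directly), whereas your route packages the same content in coarse-geometric language; both yield a constant depending only on $f$, and yours generalizes more transparently to the setting of Lemma~\ref{lem:bc2}.
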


Let $T$ be the $F_r$-tree given by the Cayley graph of the fixed basis. An automorphism
$f:F_r\to F_r$ induces a Lipschitz equivariant map $\tilde{f}:T\to T$;
$\tilde{f}$ is the lift of some homotopy equivalence of a wedge of circles
representing $f$. With the unit length metric, $|\cdot|$ gives the arc length
for geodesics based at the identity. Lemma \ref{lem:bc1} implies that the
geodesic from the identity to $w_1w_2$ is
sent to the $\frac{C}{2}$ neighborhood of the geodesic from the identity to
$f(w_1w_2)$.
Since $\tilde{f}$ is equivariant, we conclude that for all finite geodesics
$[p,q]\subseteq T$, $f([p,q])$ is in the $\frac{C}{2}$ neighborhood of the
geodesic $[f(p),f(q)]$. This property generalizes to
equivariant maps between trees.

\begin{definition}\label{def:bc}
	An equivariant continuous map $f: S\to T$ between $F_r$-trees has
	bounded cancellation with constant $C$ if for all geodesics
	$[p,q]\subseteq S$, $f([p,q])$ is in the $C$ neighborhood of the $T$
	geodesic $[f(p),f(q)]$.
\end{definition}

In this form Bestvina, Feighn, and Handel give a bounded cancellation lemma for
very small trees.

\begin{lemma}[\cite{bfh-trees}*{Lemma 3.1}]\label{lem:bc2}
	Suppose $T_0$ is a free simplicial $F_r$-tree and $T$ a very small
	$F_r$-tree, and $f: T_0\to T$ is an equivariant Lipschitz map. Then $f$
	has a bounded cancellation constant $C(f)$ satisfying $C(f) \leq
	Lip(f)\vol(T_0)$.
\end{lemma}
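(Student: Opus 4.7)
The plan is to straighten $f$ and analyze the local excursions of its image. First, replace $f$ on each open edge of $T_0$ by the geodesic in $T$ joining the images of the endpoints; the resulting map is still equivariant (the straightening is $F_r$-equivariant), agrees with $f$ on the vertex set, and has Lipschitz constant at most $Lip(f)$. So it suffices to prove the bound for this straightened map, and I will assume below that $f$ sends each edge to a $T$-geodesic.

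Fix a geodesic $[p,q]\subseteq T_0$ and $y\in[p,q]$, and let $m\in T$ be the center of the tripod $\{f(p),f(q),f(y)\}$, so that $d:=d_T(f(y),m)=d_T(f(y),[f(p),f(q)])$ is the excursion I want to bound. Because $T$ is a tree, the continuous path $f([p,y])$ from $f(p)$ to $f(y)$ must contain the geodesic $[f(p),f(y)]$, and in particular passes through $m$; choose $p'\in[p,y]$ to be the \emph{last} preimage of $m$ along $[p,y]$, and symmetrically $q'\in[y,q]$ the first. With these choices $f$ maps the open arc $(p',q')$ into the open branch of $T\setminus\{m\}$ containing $f(y)$.

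Now $f([p',q'])$ is a loop in $T$ based at $m$ whose image reaches depth at least $d$, so it has length at least $2d$; on the other hand, by the Lipschitz hypothesis, its length is at most $Lip(f)\cdot d_{T_0}(p',q')$. Combining, $d\le\tfrac{Lip(f)}{2}\,d_{T_0}(p',q')$, so it is enough to show $d_{T_0}(p',q')\le 2\vol(T_0)$.

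For this last step the plan is a pigeonhole-plus-equivariance argument. If $[p',q']$ were longer than $2\vol(T_0)$, its projection to $G_0=T_0/F_r$ would be a locally injective immersed path of length greater than $2\vol(G_0)$, and summing edge-crossing contributions forces some edge of $G_0$ to be traversed at least three times. Two of these lifts lie on the same side of $y$ in $[p',q']$, giving distinct $s\neq t$ with $t=s\cdot g$ for a nontrivial $g\in F_r$; equivariance then gives $f(t)=f(s)\cdot g$, with both images constrained to lie in the single chosen branch of $T\setminus\{m\}$. The very small hypothesis on $T$ (the absence of tripod stabilizers and of obtrusive powers) is precisely tailored to rule out such a configuration, producing the contradiction. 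This last step---translating the geometric constraint on $f(s),f(t)$ into a genuine contradiction via the very small structure---is the main obstacle, since the pigeonhole only yields a coincidence in the $F_r$-orbit and one must work to rule out that $g$ acts in some degenerate way consistent with the picture.
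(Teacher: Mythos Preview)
The paper does not give its own proof of this lemma; it is quoted from Bestvina--Feighn--Handel with the one-line remark that their argument bootstraps from the classical bounded cancellation lemma (Lemma~\ref{lem:bc1}) by approximating the very small tree $T$ by free simplicial trees. Your approach is genuinely different: a direct geometric argument inside $T$, with no approximation.

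However, there is a real gap at the last step, and you flag it yourself. From the pigeonhole you extract a nontrivial $g\in F_r$ and points $s$, $t=s\cdot g$ in $(p',q')$ with $f(s)$ and $f(t)=f(s)\cdot g$ both lying in a single component $B$ of $T\setminus\{m\}$, and then assert that the very small hypothesis rules this out. It does not. None of the very small conditions (cyclic edge stabilizers, $\Fix(g)=\Fix(g^n)$, no tripod stabilizers) prevents a group element from carrying a point of a branch to another point of the same branch. For a concrete obstruction: if $g$ acts hyperbolically on $T$ and $m$ lies off the axis $C_g^T$, then the entire axis sits in one component of $T\setminus\{m\}$ and $g$ preserves that component; the configuration you describe is then generic, not contradictory. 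The same happens if $g$ is elliptic in $T$ with fixed point inside $B$. The extra information that $[s,t]\subseteq(p',q')$ maps into $B$ only says that a fundamental domain for $g$ on its $T_0$-axis has image in $\bar B$, which is again perfectly compatible with $T$ being very small.

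So the argument does not establish the key inequality $d_{T_0}(p',q')\le 2\vol(T_0)$, and it is not clear that this route can be completed without a substantially new idea. The approximation strategy recorded in the paper sidesteps the issue entirely: once $T$ is free simplicial the map descends to a homotopy equivalence of finite graphs, where the classical lemma applies and the explicit constant survives passage to the limit.
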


Their proof uses free simplicial approximation to bootstrap this result from
Lemma \ref{lem:bc1}. This lemma in turn implies a form of bounded cancellation
for length functions of very small trees, reminiscent of the form of Lemma
\ref{lem:bc1} (Kapovich and Lustig state a similar lemma, but with subtly
different hypotheses~\cite{kapovich-lustig-bc}).

\begin{lemma}\label{lem:bclf}
	Suppose $T$ is a very small $F_r$-tree and $\Lambda$ a basis for $F_r$.
	There is a constant $C(\Lambda, T)$ such that for all $g,h\in F_r$, if
	$|gh|_\Lambda = |g|_\Lambda+|h|_\Lambda$ and $gh$ is cyclically reduced
	with respect to $\Lambda$, then
		\[ \ell_T(gh) \geq \ell_T(g)+\ell_T(h) - C(\Lambda,T). \]
	Further, $C(\Lambda, T) \leq 6r\inf{Lip(f)}$ where the infemum is taken
	over surjective Lipshitz maps $f:S_\Lambda \to T$ from the universal
	cover of a wedge of $r$ circles marked by the basis $S_\Lambda$.
\end{lemma}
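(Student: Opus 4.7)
The plan is to deduce the estimate from Lemma~\ref{lem:bc2} applied to a surjective equivariant Lipschitz map $f : S_\Lambda \to T$. Set $L = Lip(f)$ and let $C = C(f)$ denote the bounded cancellation constant, so Lemma~\ref{lem:bc2} gives $C \leq L \vol(S_\Lambda) = rL$. Fix $p = f(1) \in T$. The strategy is to use bounded cancellation twice: once to place $p$ within $C$ of the characteristic set $C_{gh}^T$, and once to place $pg = f(g)$ within $C$ of the geodesic $[p, pgh]$. The standard identity $d_T(x, xw) = 2\, d_T(x, C_w^T) + \ell_T(w)$ (due to Culler--Morgan), together with tree geometry, then combines these estimates into the desired lower bound.

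For the first bound, since $gh$ is cyclically reduced $1 \in C_{gh}^{S_\Lambda}$, so $[(gh)^{-n}, (gh)^n]$ is a long arc of the $S_\Lambda$-axis through $1$. Applying $f$ and Lemma~\ref{lem:bc2}, $p$ lies in the $C$-neighborhood of $[p(gh)^{-n}, p(gh)^n]$; separating the hyperbolic and elliptic regimes on $T$ (in the elliptic case using that all translates $p(gh)^k$ share a common projection onto $\Fix(gh) = C_{gh}^T$, which lies on the geodesic, and the no-obtrusive-powers axiom to identify $C_{(gh)^{2n}}^T = C_{gh}^T$), one checks that the point of this geodesic nearest $p$ realizes $d_T(p, C_{gh}^T)$, so this distance is at most $C$. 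For the second bound, the hypothesis $|gh|_\Lambda = |g|_\Lambda + |h|_\Lambda$ forces $[1, gh] = [1,g] \cup [g, gh]$ in $S_\Lambda$, so $pg = f(g) \in f([1, gh])$ and Lemma~\ref{lem:bc2} applied to the geodesic $[1, gh]$ yields $d_T(pg, [p, pgh]) \leq C$.

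Combining using the tree identity $d_T(p, pg) + d_T(pg, pgh) - d_T(p, pgh) = 2\, d_T(pg, [p, pgh])$, and using $d_T(x, xw) \geq \ell_T(w)$ applied to the pairs $(p,g)$ and $(pg, h)$ (noting $(pg)h = pgh$ by equivariance), I obtain $d_T(p, pgh) \geq \ell_T(g) + \ell_T(h) - 2C$. The characteristic set identity then gives
\[ \ell_T(gh) = d_T(p, pgh) - 2\, d_T(p, C_{gh}^T) \geq \ell_T(g) + \ell_T(h) - 4C \geq \ell_T(g) + \ell_T(h) - 4rL. \]
Taking the infimum over surjective equivariant Lipschitz maps $f$ yields $C(\Lambda, T) \leq 4r\inf Lip(f) \leq 6r\inf Lip(f)$, as required.

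The main obstacle is the first bound, which must be formulated and verified uniformly across the hyperbolic/elliptic dichotomy for $gh$ acting on $T$; the elliptic case is exactly where very-smallness of $T$ enters, via no obtrusive powers, to guarantee that $C_{(gh)^n}^T = C_{gh}^T$ so that the BCC estimate on the long arc $[(gh)^{-n}, (gh)^n]$ actually captures the projection of $p$ onto $\Fix(gh)$.
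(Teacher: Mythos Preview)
Your argument is correct and in fact sharper than the paper's, yielding the constant $4r\inf Lip(f)$ rather than $6r\inf Lip(f)$. The two proofs diverge after invoking Lemma~\ref{lem:bc2}. The paper establishes the stronger pointwise inequality $d_T(f(q),f(q\cdot gh))\geq \ell_T(g)+\ell_T(h)-6B$ for \emph{every} $q\in S_\Lambda$, in three stages: first reducing an arbitrary $q$ to a point $p$ on $C_{gh}^{S_\Lambda}$ (costing $2B$), then reducing $p$ to the Culler--Morgan endpoint $c$ where $[c,c\cdot g]\subseteq C_{gh}^{S_\Lambda}$ via a convex-hull case analysis (another $2B$), and finally using the tripod $f(c),f(c\cdot g),f(c\cdot gh)$ (a last $2B$). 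You bypass all of this by working only at the single point $p=f(1)$, invoking the Culler--Morgan identity $\ell_T(gh)=d_T(p,p\cdot gh)-2d_T(p,C_{gh}^T)$ and bounding the two pieces separately: one application of bounded cancellation to the tripod $p,\,pg,\,pgh$ and one to the long axis segment $[(gh)^{-n},(gh)^n]$. The paper's route avoids any case split on whether $gh$ is hyperbolic or elliptic in $T$, whereas your first bound genuinely needs the no-obtrusive-powers axiom in the elliptic case (to ensure the common projection $q$ of the orbit $\{p(gh)^k\}$ onto $\Fix(gh)$ is actually the nearest point of $[p(gh)^{-n},p(gh)^n]$ to $p$, which requires that $(gh)^{2n}$ not fix the initial edge of $[q,p]$). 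That verification is routine, and what you gain is a shorter argument with a tighter constant.
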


\begin{proof}
	Let $S_\Lambda$ be the universal cover of a wedge of $r$ circles
	with the circles marked by the basis
	$\Lambda$ where all edges have length one. Suppose $f:S_\Lambda\to T$
	is an equivariant Lipschitz surjection. (Such maps always exist: pick
	zero cells $\ast\in S_\Lambda$ and $\star\in T$, define $f:S_\Lambda^{0} \to T$ on
	the zero skeleton by $f(\ast\cdot g) = \star\cdot g$ and extend
	linearly and equivariantly over edges. Since $S_\Lambda$ has finitely
	many edge orbits, this extension is Lipschitz. Moreover, $f$ is
	surjective since $T$ is minimal.) By Lemma~\ref{lem:bc2}, $f$ has
	bounded cancellation. Let $B$ be the bounded cancellation
	constant for $f$. Suppose $g,h\in F_r$ satisfy $|gh|_\Lambda =
	|g|_\Lambda+|h|_\Lambda$ and $gh$ is cyclically reduced. We will show that there is a constant $C$
	depending on $\Lambda$ and $T$
	such that for all $q\in S_\Lambda$,
	\begin{equation}\label{eqn:bcc-lf}
		d(f(q),f(q\cdot gh)) \geq \ell_T(g)+\ell_T(h) - C.
	\end{equation}
	Since $f$ is equivariant and surjective, this implies the conclusion.

	We will establish Equation \ref{eqn:bcc-lf} by showing that for any
	$q \in S_\Lambda$ there is a $p\in C_{gh}^{S_\Lambda}$ so that, for
	auxilliary constants $C'$ and $C''$,
	\begin{equation}
		d(f(q),f(q\cdot gh)) \geq d(f(p),f(p\cdot gh)) - C',
		 \label{eqn:bcc-lf-1}
	\end{equation}
	and for all $p\in C_{gh}^{S_\Lambda}$,
	\begin{equation}
		d(f(p),f(p\cdot gh)) \geq \ell_T(g)+\ell_T(h) - C''.
		\label{eqn:bcc-lf-2}
	\end{equation}

	\emph{Proof of Equation \ref{eqn:bcc-lf-1}.} Let $p$ be the point of
	$C_{gh}^{S_\Lambda}$ closest to $q$. The geodesic $[q,q\cdot gh]$
	contains the points $p$ and $p\cdot gh$. Consider the convex hull
	in $T$ of $f(q),f(p),f(q\cdot gh),$ and $f(p\cdot gh)$
	(Figure \ref{fig:bcc-cxh}).
	\begin{figure}
		\begin{center}
			\begin{tikzpicture}[font=\small]
	\node [vertex] (q) at (0,0) {};
	\node [vertex] (p) at (1,1) {};
	\node [vertex] (pgh) at (3,-1) {};
	\node [vertex] (qgh) at (4,0) {};

	\draw (p) -- node [midway,right] {$\leq B$} (1,0);
	\draw (pgh) -- node [midway,right] {$\leq B$} (3,0);
	\draw (q) -- (qgh);

	\node [left=2pt of q] {$f(q)$};
	\node [above=2pt of p] {$f(p)$};
	\node [below=2pt of pgh] {$f(p\cdot gh)$};
	\node [right=2pt of qgh] {$f(q\cdot gh)$};
\end{tikzpicture}
 			\caption{A convex hull in $T$.}
			\label{fig:bcc-cxh}
		\end{center}
	\end{figure}
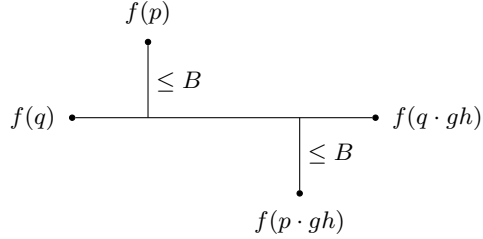
	Since the map $f$ has bounded cancellation, both $f(p)$ and $f(p\cdot
	gh)$ are in the $B$ neighborhood of the geodesic $[f(q),f(q\cdot
	gh)]\subset T$, and we have
		\[ d(f(q),f(q\cdot gh))\geq d(f(p),f(p\cdot gh)) - 2B. \]

	\emph{Proof of Equation \ref{eqn:bcc-lf-2}.} Suppose now that $p\in
	C_{gh}^{S_\Lambda}$. We claim that it suffices to establish the inequality for the
	point $c$ that is the endpoint of the Culler-Morgan fundamental domain
	(Lemma~\ref{lem:cmfd})
	for the action of $gh$ on $C_{gh}^{S_\Lambda}$. To make this claim we
	first need to know the lemma applies. Since $gh$ is reduced and cyclically reduced, the word
	length equals the translation length of $gh$ on $S_\Lambda$, so that
	\[\ell_{S_\Lambda}(gh) = |gh|_\Lambda = |g|_\Lambda+|h|_\Lambda \geq
	\ell_{S_\Lambda}(g)+\ell_{S_\Lambda}(h).\]
	Thus Lemma~\ref{lem:cmfd} applies and there is a $c\in
	C_{gh}^{S_\Lambda}$ such that $[c,c\cdot g]\subseteq
	C_{gh}^{S_\Lambda}$.

	Continuing with the claim, without loss of
	generality we may assume that $c$ is between $p$ and $p\cdot gh$ on
	$C_{gh}^{S_\Lambda}$ by translating by $gh$ as needed. Consider the
	convex hull of $f(p),f(c),f(p\cdot gh),f(c\cdot gh)$ in $T$. Let $x$ be
	the point on the geodesic $[f(p),f(c\cdot gh)]$ closest to $f(c)$ and
	$y$ the point closest to $f(p\cdot gh)$. Since $f$ has bounded
	cancellation, $d(f(c),x),d(f(p\cdot gh),y) \leq B$. We consider two
	cases, when $d(f(p),x) < d(f(p),y)$ and $d(f(p),y)\leq d(f(p),x)$. In
	both cases it will be important to note that, as $f$ is equivariant and
	the action is by isometry,
	$d(f(p),f(c)) = d(f(p\cdot gh),f(c\cdot gh))$.

	In the first case, since $d(f(p),x) + d(x,f(c)) = d(f(p\cdot
	gh),y)+d(y,f(c\cdot gh))$ and $x$ is on the geodesic $[f(c),f(c\cdot
	gh))]$ we have
	\begin{align*}
		d(f(p),f(p\cdot gh)) &= d(f(p),x)+d(x,y)+d(y,f(p\cdot gh)) \\
		&= d(f(c\cdot gh),y)-d(f(c),x)+d(x,y)+2d(y,f(p\cdot gh)) \\
		&\geq  d(f(c\cdot gh),f(c)) - 2B.\\
	\end{align*}

	In the second case, the geodesic $[f(p),f(c)]$ contains $[y,x]$ (which
	may be a point), so we have $d(f(p),y)+d(y,x)+d(x,f(c)) = d(f(p\cdot
	gh), y) + d(y,x) + d(x,f(c\cdot gh))$, and we calculate
	\begin{align*}
		d(f(p),f(p\cdot gh)) &= d(f(p),y) + d(y,f(p\cdot gh)) \\
		&= d(f(c\cdot gh),x)-d(f(c),x)+2d(y,f(p\cdot gh)) \\
		&\geq d(f(c),f(c\cdot gh)) - 2B. \\
	\end{align*}

	Hence, for any $p\in C_{gh}^{S_\lambda}$,
	\[ d(f(p),f(p\cdot gh)) \geq d(f(c),f(c\cdot gh)) - 2B \]
	and it remains to show that this is bounded below by
	translation lengths. 	
	
	By construction, $c\cdot g$ is on the geodesic $[c, c\cdot gh]$.  Consider the image of
	$c, c\cdot g,$ and $c\cdot gh$ in $T$ and the geodesic triangle they
	span. Let $x\in T$ be the midpoint of this triangle
	(Figure \ref{fig:bcc-tri}).
	\begin{figure}
		\begin{center}
			\begin{tikzpicture}[font=\small]
	\node [vertex] (p) at (-1,0) {};
	\node [vertex] (x) at (0,0) {};
	\node [vertex] (pg) at (1,1) {};
	\node [vertex] (pgh) at (1,-1) {};

	\draw (p) -- (x);
	\draw (pgh) -- (x);
	\draw (pg) -- (x);

	\node [left=2pt of p] {$f(c)$};
	\node [above right=2pt of pg] {$f(c\cdot g)$};
	\node [below right=2pt of pgh] {$f(c\cdot gh)$};
	\node [below left=2pt of x] {$x$};
\end{tikzpicture}
 			\caption{The triangle $f(c),f(c\cdot g), f(c\cdot gh)$
			in $T$.}
			\label{fig:bcc-tri}
		\end{center}
	\end{figure}
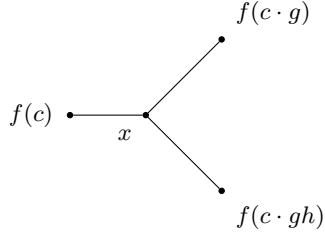
	The bounded cancellation of $f$ implies that $d(x,f(c\cdot g)) \leq B$.
	We have
	\begin{align*}
		d(f(c),f(c\cdot gh)) &= d(f(c),f(c\cdot g))+d(f(c\cdot
		g),f(c\cdot gh)) - 2 d(x,f(c\cdot g)) \\
		&\geq \ell_T(g)+\ell_T(h)-2B \\
	\end{align*}
	establishing Equation \ref{eqn:bcc-lf-2} with $C'' = 4B$.

	Combining Equations \ref{eqn:bcc-lf-1} and \ref{eqn:bcc-lf-2}, we
	have for all $q\in S_\Lambda$
		\[ d(f(q),f(q\cdot gh)) \geq \ell_T(g)+\ell_T(h) -6B \]
	and therefore,
		\[ \ell_T(gh) \geq \ell_T(g)+\ell_T(h) - 6B. \]

	Finally, we note that this proof holds for all equivariant Lipschitz
	surjections $f: S_\Lambda\to T$, and by the previous bounded
	cancellation lemma $B\leq Lip(f)\cdot
	\vol(S_\Lambda) = Lip(f)\cdot r$. Taking an infimum over equivariant Lipschitz
	surjections $f: S_\Lambda\to T$ define $ C(\Lambda,T) =
	6r\inf\{Lip(f)\}$. We conclude
		\[ \ell_T(gh) \geq \ell_T(g)+\ell_T(h) -C(\Lambda,T) \]
	where the constant $C$ depends only on the basis and the very small
	tree $T$.
\end{proof}

To apply this lemma effectively, it is useful to know when a good choice of
basis exists or otherwise obtain control over the Lipshitz maps $f :
S_\lambda\to T$. Lemma~\ref{lem:good-basis} is one example of such control.

\subsection{Bass and Serre's arboretum}
\label{sec:bs-theory}

Bass and Serre~\cite{trees} developed a detailed structure theory for groups
acting on simplicial trees that relates the tree action to a generalization of an
amalgamated product known as a graph of groups. Cohen and Lustig note that this
theory applies equally well to metric trees~\cite{cohen-lustig}. Below we recall key results of the
theory and fix notation.

A \emph{graph} $\Gamma$ is a collection of vertices $V(\Gamma)$, edges
$E(\Gamma)$, initial and terminal vertex maps $o, t : E\to V$, and an
involution $\bar{\cdot} : E\to E$, satisfying $\bar{e}\neq e$ and $o(\bar{e}) =
t(e)$.  When there is a unique edge with $u = o(e)$ and $v = t(e)$ we will
sometimes refer to $e$ as $(u, v)$. An assignment of lengths $d:E\to
\mathbb{R}_{\geq 0}$ satisfying $d(e) = d(\bar{e})$ and $d(e)> 0$ for all $e\in
E$ is a \emph{metric} on $\Gamma$.  These edges are referred to as
\emph{oriented edges}, and a graph $\Gamma$ has a metric space realization by
taking a point for each vertex, and attaching an interval of length $d(e)$
joining $o(e)$ and $t(e)$ for a set of representatives for the orbits of the
involution $\bar{\cdot}$. An \emph{orientation} of a graph $\Gamma$ is a set of
orbit representatives for the involution. When working with graphs if not
otherwise specifying a metric we will use the metric that assigns all edges
length one.

A simplicial tree $T$ can be given a graph structure by taking branch points as
vertices and adding a pair of edges $(p,q) = \overline{(q,p)}$ for each pair
of vertices $p,q\in T^{(0)}$ such that $[p,q]$ is a 1-simplex. Assigning
lengths to one-simplices induces a metric on $T$ and a metric graph
structure. The tree $T$ with this metric is
the metric realization of this metric graph structure. When it is important to do so we will distinguish between a
simplicial tree and a graph structure arising from a simplicial tree by calling
the latter a \emph{graphical tree}.  A group $G$ acting on $T$ by simplicial
isomorphism (isometry)
naturally acts on this (metric) graph structure, and we say this action is \emph{without
inversion} if for all $e\in E(T)$ and $g\in G$, $e\cdot g \neq \bar{e}$. An
action with inversion can be turned into an action without inversion by
subdividing $T$.

\begin{definition}
	A \emph{(metric) graph of groups} is a pair $(G,\Gamma)$ where $\Gamma$
	is a connected (metric) graph, and $G$ is an assignment of groups to
	the vertices and edges of $\Gamma$ satisfying $G_{e} = G_{\bar{e}}$,
	and injections $\iota_e : G_e \to G_{t(e)}$. We will often suppress
	the assignment $G$ and write $\Gamma_e, \Gamma_v,$ etc.
\end{definition}

The following applies to metric graphs of groups~\cite{cohen-lustig} equally
well, but we make only light use of metric trees and can make do without
belaboring the point.

The fundamental theorem of Bass-Serre theory gives an equivalence between
actions on graphical trees and graphs of groups. Given a group $G$ acting on a
graphical tree $T$, the quotient graph $\bar{T}$ has a graph of groups
structure as follows. Pick a maximal subtree $S\subseteq \bar{T}$ and an
orientation $Y$ of $\Gamma$. Define a section $j:\bar{T} \to T$ by first fixing
a lift of $S$, and then for each $e\in Y\setminus E(S)$, define $j(e)$ so
that $o(j(e)) = j(o(e))$; also choose elements $\gamma_e \in G$ so that $t(je)
= \gamma_e j(t(e))$ for these edges. The assignment of $\gamma_e$ is extended
to all of $E(\bar{T})$ by $\gamma_{\bar{e}} = \gamma_e^{-1}$ and $\gamma_e = 1$
for $e\in E(S)$. Let $\chi$ be the indicator function for $E(\bar{T})\setminus
Y$. The graph of groups structure on $\bar{T}$ is given by $G_v = \Stab(j(v))$,
$G_e = \Stab(j(e))$ and the inclusion maps by $\iota_e(a) =
\gamma_e^{\chi(e)-1}a\gamma_e^{1-\chi(e)}$. Different choices of lift and
maximal tree give isomorphic graphs of groups structures on the quotient, we say
two graphs of groups are \emph{equivalent} if they are different quotient
labellings of the same tree.

Starting from a graph of groups $\Gamma$ there is an inverse operation,
which recovers the group $G$ as the \emph{fundamental group} of the graph of
groups, and the tree $T$ that $G$ acts on so that the quotient is
$\Gamma$. This is the \emph{Bass-Serre tree} of $\Gamma$, the
construction depends on a choice of maximal tree, and is unique up to
equivariant isomorphism (isometry in the metric case). We will denote the quotient
graph of groups by $\bar{T}$ and its tree $T$. When working with properties
that are not conjugacy invariant the fundamental domain used will be
specified.

The construction of the fundamental group of a graph of groups sits naturally
in the context of the \emph{fundamental groupoid} of a graph of groups,
introduced by Higgins~\cite{higgins}.

\begin{definition}
	The \emph{fundamental groupoid} $\pi_1(\Gamma)$ of a graph of groups $\Gamma$ is
	the groupoid with vertex set $V(\Gamma)$, generated by the path
	groupoid of $\Gamma$ and the groups $G_v$ subject to the following
	conditions. We require that for each $v\in V(\Gamma)$ the group $G_v$
	is a sub-groupoid
	based at $v$ and that the group and groupoid structures agree.
	Further for all $e\in E(\Gamma)$ and $g\in G_e$, we have
		\[ \bar{e}\iota_{\bar{e}}(g)e = \iota_e(g). \]
	In particular this implies $\bar{e}$ and $e$ are inverse in
	$\pi_1(\Gamma)$.
\end{definition}

By taking the vertex subgroup of $\pi_1(\Gamma)$ at a vertex $v$, we get
the fundamental group $\pi_1(\Gamma,v)$. Changing basepoint results in
an isomorphic group. The group $\pi_1(\Gamma,v)$ can also be described in
terms of maximal trees. Fix a maximal tree $T$, and take the quotient of
$\pi_1(\Gamma)$ by first identifying all vertices and then collapsing all
edges of $T$. As explained by Higgins, it follows from standard results in
groupoid theory that the result is isomorphic to
$\pi_1(\Gamma,v)$~\cite{higgins}.

Let $e = (e_1,e_2,\ldots,e_n)$ be a possibly empty edge path starting at $v$
and $g = (g_0,g_1,\ldots,g_n)$ a sequence of elements $g_i \in G_{t(e_i)}$ with
$g_0 \in G_v$. These data represent an arrow of $\pi_1(\Gamma)$ from $v$
to $t(e_n)$ by the groupoid product
	\[ g_0e_1g_1\cdots e_ng_n. \]
A non-identity element of $\pi_1(\Gamma)$ expressed this way is
\emph{reduced} if either $n=0$ and $g_0\neq \id$, or $n>0$ and for all $i$ such
that $e_i = \bar{e}_{i+1}$, $g_i \notin G_{e_i}^{e_i}$. By fixing appropriate left
transversals, a normal form for arrows of $\pi_1(\Gamma)$ is obtained. For
each edge $e\in E(\Gamma)$, fix a left transversal $S_e$ of the image of $G_e$
in $G_{o(e)}$ containing the identity; by inductively applying the defining
relations a reduced arrow is equivalent to a reduced arrow of the form
	\[ s_0e_0s_1\cdots e_nh \]
with each $s_i\in S_{e_i}$ and $h\in G_{t(e_n)}$. This representation is
unique~\cite{higgins}. By specializing to $\pi_1(\Gamma,v)$ we obtain the
\emph{Bass-Serre normal form} for elements of the fundamental group based at
$v$, with $h\in G_v$. This normal form depends on the choice of
left-transversal, but the edges used do not.

For a conjugacy class $[g] \in \pi_1(\Gamma,v)$, a representative $g$ is
cyclically reduced if it is reduced, $s_0 = \id$, and $g$ has no sub-arrow $g'$
based at $v$ such that $g=cg'c^{-1}$ for $c\in\pi_1(\Gamma,v)$. In
particular, if $o(e_0) = t(e_0) = v$, we have that if $\bar{e}_n = e_0$, then
$h\notin \iota_{e_n}(G_{e_n})$.

When $\pi_1(\Gamma,v)$ is free all vertex and edge
groups are also free. A more refined normal form can be obtained
by fixing an ordered basis $\Lambda$ for $\pi_1(\Gamma,v)$. Using the
lexicographic order induced by $\Lambda$ and the
Nielsen-Schreier theorem we obtain a unique minimal basis for each $G_v$. The
induced order on the minimal bases of the $G_v$ specifies a unique minimal
left Schreier transversal for the image of each $G_e$ with $t(e) = v$. 
Further, using the minimal right Schreier transversal $R_e$ of
$G_e$ in $G_{t(e)}$ with respect to its preferred basis, we obtain a unique
expression of the form
	\[ x_0r_0e_1x_1r_1\cdots e_nx_nr_n \]
where $x_0 \in \iota_{e_n}(G_{e_n})$, each $x_i\in \iota_{e_i}(G_{e_i})$, and $r_i \in
R_{e_i}$, and $x_ir_i$ reduced words with respect to the induced bases of the
vertex groups. 
We call this \emph{the transverse Bass-Serre normal form with respect to $\Lambda$}.

\begin{definition}\label{def:gog-min}
	A graph of groups $\Gamma$ is \emph{minimal} if for every
	connected proper subgraph $\Gamma'$ and $v\in V(\Gamma')$ the
	induced map $\pi_1(\Gamma',v)\to\pi_1(\Gamma,v)$ is not
	surjective.
\end{definition}

\begin{remark}
	This implies that if $v\in V(\Gamma)$
	has valence one in a minimal graph of groups $\Gamma$, then $\iota_e(G_e)$
	is not surjective, for the unique edge $e$ satisfying
	$v=t(e)$. As long as $\pi_1(\Gamma,v) \ncong \mathbb{Z}$ or
	$D_\infty$, the resulting tree $T$ is then an irreducible
	$\pi_1(\Gamma,v)$-tree.
\end{remark}

\begin{proposition}[\cite{cohen-lustig}*{Proposition 9.2}]
	A graph of groups $\Gamma$ is minimal if and only if
	its Bass-Serre tree $T$ is a minimal $\pi_1(\Gamma,v)$ tree.
\end{proposition}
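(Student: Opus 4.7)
The plan is to establish a dictionary between connected $G$-invariant subtrees $T'\subseteq T$ with $G=\pi_1(\Gamma,v)$, and connected subgraphs $\Gamma'\subseteq\Gamma$ for which $\pi_1(\Gamma',v')\to\pi_1(\Gamma,v)$ is surjective. Under this correspondence proper subtrees match proper subgraphs, so minimality of $\Gamma$ as a graph of groups matches minimality of $T$ as a $G$-tree.

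For the forward direction I argue the contrapositive. Given a proper $G$-invariant subtree $T'\subsetneq T$, set $\Gamma'=T'/G$. Then $\Gamma'\subsetneq\Gamma$: if every $G$-orbit in $T$ met $T'$ then $G$-invariance of $T'$ would force $T'=T$. The restricted action of $G$ on $T'$ is again simplicial and without inversion, so the fundamental theorem of Bass-Serre theory equips $\Gamma'$ with a graph of groups structure whose fundamental group is $G$. Since $G$-orbits of cells in $T'$ coincide with the $G$-orbits of those cells in $T$, the edge and vertex groups of this structure agree with the restriction to $\Gamma'$ of those of $\Gamma$. The inclusion $\Gamma'\hookrightarrow\Gamma$ therefore induces an isomorphism, in particular a surjection, on fundamental groups, contradicting minimality of $\Gamma$.

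For the reverse direction, suppose $\Gamma'\subsetneq\Gamma$ is a proper connected subgraph with $\pi_1(\Gamma',v')\twoheadrightarrow G$ surjective. I would choose a maximal subtree of $\Gamma'$, extend it to a maximal subtree of $\Gamma$, lift the extended tree to $T$, and then lift each remaining edge of $\Gamma'$ at its chosen origin to obtain a connected lift $\widetilde{\Gamma}'\subseteq T$ mapping homeomorphically to $\Gamma'$. The principal technical step is to identify the setwise stabilizer $H=\{g\in G : g\widetilde{\Gamma}'=\widetilde{\Gamma}'\}$ with the image of $\pi_1(\Gamma',v')$ in $G$: using the Bass-Serre normal form based at a lift of $v'$, an element of $G$ lies in $H$ exactly when it admits a normal form supported on edges of $\Gamma'$ and elements of its vertex groups. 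The surjectivity hypothesis then forces $H=G$, so $\widetilde{\Gamma}'$ is a $G$-invariant subtree; and because $\Gamma'\subsetneq\Gamma$, at least one $G$-orbit of cells of $T$ avoids $\widetilde{\Gamma}'$, so $\widetilde{\Gamma}'\subsetneq T$ is a proper invariant subtree. The identification of $H$ with the image of $\pi_1(\Gamma',v')$ is the main obstacle; the remainder of the argument is bookkeeping with orbits and lifts.
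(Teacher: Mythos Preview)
Your overall strategy matches the paper's: argue both directions by contrapositive, passing from invariant subtrees to subgraphs by quotienting and from subgraphs back to invariant subtrees by lifting. Your quotient direction is fine and is exactly what the paper does.

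The lift direction, however, contains a genuine error. Your $\widetilde{\Gamma}'$ is a \emph{finite} subtree of $T$ (a fundamental-domain lift of $\Gamma'$), and for such an object the setwise stabilizer cannot be the image of $\pi_1(\Gamma',v')$ in general. For a concrete failure, take $\Gamma$ to be a lollipop: a vertex $u$ with vertex group $\langle a\rangle$, a loop $e$ at $u$ with trivial edge group, and an edge $f$ to a valence-one vertex $v$ with trivial vertex group. Then $\pi_1(\Gamma)\cong F_2$ and $\Gamma'=\{u,e\}$ has $\pi_1(\Gamma')\to\pi_1(\Gamma)$ an isomorphism. Your $\widetilde{\Gamma}'$ is a single edge lying over $e$; since edge stabilizers are trivial and the action is without inversion, its setwise stabilizer is trivial, not $F_2$. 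So the claimed identification of $H$ with the image of $\pi_1(\Gamma',v')$ is false, and $\widetilde{\Gamma}'$ is not $G$-invariant.

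The fix is exactly what the paper does: rather than the finite lift, take the Bass--Serre tree $T'$ of $\Gamma'$ (with its induced graph-of-groups structure) and map it into $T$ via the morphism induced by the inclusion $\Gamma'\hookrightarrow\Gamma$. The image is by construction $\pi_1(\Gamma',v')$-invariant, hence $G$-invariant by surjectivity; it is proper because its quotient by $G$ is $\Gamma'\subsetneq\Gamma$. Equivalently, replace your $\widetilde{\Gamma}'$ by its orbit under the image of $\pi_1(\Gamma',v')$. Once you do this, the normal-form analysis you proposed becomes unnecessary: invariance is immediate from the construction.
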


\begin{proof}
	Cohen and Lustig leave this proof to the reader. We include it here.
	Suppose $\Gamma'\subseteq \Gamma$ is a connected proper
	subgraph and $\pi_1(\Gamma',v)\to \pi_1(\Gamma,v)$ is
	surjective. Take a lift of $T'$ (the tree of $\Gamma'$) to $T$. This
	is a $\pi_1(\Gamma',v)$ invariant subtree by construction, and the
	action of $\pi_1(\Gamma,v)$ is induced by inclusion, so
	$T_{\Gamma'}$ is a $\pi_1(\Gamma,v)$ invariant subtree, since
	the inclusion is surjective. Conversely, if $T'\subseteq T$
	is proper and $\pi_1(\Gamma,v)$ invariant, then
	$T'/\pi_1(\Gamma,v)$ is a connected proper subgraph with
	graph of groups fundamental group $\pi_1(\Gamma,v)$, the induced
	inclusion map is an isomorphism.
\end{proof}

To ensure that two minimal graphs of groups with equivariantly isometric
Bass-Serre trees are isomorphic as graphs of groups a certain pathology must be
excluded.

\begin{definition}\label{def:gog-visible}
	Let $\Gamma$ be a graph of groups. A valence two vertex $v\in
	V(\Gamma)$ with $v = t(e_1) = t(e_2)$ is \emph{invisible} if
	$\iota_{e_1}$ and $\iota_{e_2}$ are isomorphisms. If $\Gamma$ has no
	invisible vertices it is a \emph{visible} graph of groups.
\end{definition}

Invisible vertices are readily created by barycentric subdivision of edges and
result in non-isomorphic simplicial structures on the Bass-Serre tree without
changing the equivariant isometry class.

\subsection{Topological models}

Several authors give, in varying stages of development, an approach to building
a topological model of a graph of
groups~\cites{scott-wall,tretkoff,chipman,althoen}. The treatment given by
Scott and Wall is the popular reference~\cite{scott-wall}, though Tretkoff's
account includes a significantly more extensive discussion of the topological
basis of normal forms~\cite{tretkoff}. The definitions given by the various
authors are equivalent in the cellular category, though the language is quite
variable. This section will most closely follow Tretkoff's account.

\begin{definition}
	A \emph{graph of spaces} $\mathcal{X}$ over a graph $\Gamma$ is a collection of cell
	complexes $\mathcal{X}$ indexed by the vertices and edges of $\Gamma$,
	such that $\mathcal{X}_e^m = \mathcal{X}_{\bar{e}}^m$, and cellular
	inclusions $\iota_e: \mathcal{X}_e^m\to \mathcal{X}_{t(e)}$. The
	\emph{total space} of
	$\mathcal{X}$, denoted $X$ is the quotient of the disjoint union 
		\[ \sqcup_{v\in V(\Gamma)} \mathcal{X}_v \sqcup_{e\in
		E(\Gamma)} \mathcal{X}_e\times [0,1] \]
	by the identifications
	\begin{gather*}
		\mathcal{X}_e^m\times [0,1] \to \mathcal{X}_{\bar{e}}^m\times [0,1]
		\qquad (x,t) \mapsto (x,1-t) \\
		\mathcal{X}_e^m\times 1 \to \mathcal{X}_v \qquad (x,1) \mapsto
		\iota_e(x)
	\end{gather*}
\end{definition}

The total space $X$ of a graph of spaces over $\Gamma$ comes with a map
$q:X\to\Gamma$ to the topological realization of $\Gamma$ by $q(\mathcal{X}_v)
= v$ and $q(\mathcal{X}_e^m\times\{t\}) = e(t)$, the point of $e$ at coordinate
$t$ realizing $e$ as the one-cell $[0,1]$. If $X$ is a cell complex with
cellular map $q:X\to \Gamma$ such that the preimages of vertices and midpoints
of edges gives a graph of spaces structure with $X$ as the total space, we say
$q$ induces a graph of spaces structure on $X$. Note that the image of
$\mathcal{X}_e^m\times [0,1]$ in $X$ is the double mapping cylinder on the two
inclusion maps, we denote this image $\mathcal{X}_e$. (Indeed, some authors only require the maps be $\pi_1$
injective and construct the total space with the double mapping cylinder.)
The spaces $\mathcal{X}_e^m$ naturally include into the total space $X$ via the
map $\mathcal{X}_e^m \to \mathcal{X}_e^m\times \{\frac{1}{2}\}$, hence the
superscript $m$ for midpoint.

By taking fundamental groups of the vertex and edge spaces of a graph of spaces
we obtain an associated graph of groups assignment $G$ on $\Gamma$, and with
$x\in \mathcal{X}_v$, $\pi_1(X,x) \cong
\pi_1(\Gamma,v)$. This operation of course has an inverse, given a graph
of groups $\Gamma$ a natural graph of spaces over $\Gamma$ can be constructed from
$K(\Gamma_v,1)$ and $K(\Gamma_e,1)$ spaces. The group of deck transformations of the universal cover
$\tilde{X}$ gives a definition of the fundamental group of
$\Gamma$ that does not require a choice of basepoint or maximal tree.

Tretkoff gives a topological normal form for the homotopy class of a path
relative to the endpoints in a graph of spaces, taking advantage of a
classification of edges in the one skeleton. For a graph of spaces structure
$\mathcal{X}$ with total space $X$, an edge in $X^{(1)}$ is
\emph{$\mathcal{X}$-nodal} if it lies in a vertex space, and
\emph{$\mathcal{X}$-crossing} otherwise. Tretkoff's form makes use of a fixed topological realization of the left
transversals to ensure uniqueness, we need only the topological taxonomy of
edges in the path, as formulated by Bestvina, Feighn, and Handel~\cite{bfh-ii}.

\begin{lemma}[\citelist{\cite{tretkoff}\cite{bfh-ii}*{Section 2.7}}]
	\label{lem:gos-nf}
	Every path in a graph of spaces $X$ is homotopic relative to the
	endpoints to a path of the form (called \emph{normal form})
		\[ v_0H_1v_1H_2\cdots H_nv_n \]
	where each $v_i$ is a (possibly trivial) tight edge path of
	$\mathcal{X}$-nodal edges, each $H_i$ is $\mathcal{X}$-crossing,
	and for all $1\leq i \leq n-1,$ $H_iv_iH_{i+1}$ is not homotopic
	relative to the endpoints to an $\mathcal{X}$-nodal edge path. Any
	two representatives of the homotopy class of a path in normal form have
	the same $n$. A similar statement holds for free homotopy classes of
	loops.
\end{lemma}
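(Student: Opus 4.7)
The plan is to prove existence by an inductive tightening procedure and invariance of $n$ by projecting to the Bass-Serre tree $T$ of the associated graph of groups.

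For existence, I would start with an arbitrary edge path $\gamma$ representing the given path homotopy class, subdividing as needed so that each edge of $\gamma$ is either $\mathcal{X}$-nodal or $\mathcal{X}$-crossing (a crossing edge being a single one-cell running across some $\mathcal{X}_e^m \times [0,1]$). This immediately writes $\gamma$ in the shape $v_0 H_1 v_1 \cdots H_n v_n$. Next tighten each $v_i$ within its ambient vertex space to a backtrack-free edge path; this uses only that each $\mathcal{X}_v$ is a subcomplex. Then iterate the following reduction: while some $H_i v_i H_{i+1}$ is homotopic rel endpoints to an $\mathcal{X}$-nodal edge path, replace it by such a path and retighten the resulting merged nodal block. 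Each reduction decreases $n$ by two, so the process terminates in a path satisfying the normal form conditions.

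For the invariance of $n$, I would lift to the universal cover $p : \tilde X \to X$. The space $\tilde X$ inherits a graph of spaces structure over the Bass-Serre tree $T$, and the composition $q \circ p : \tilde X \to T$ carries nodal edges to vertices and crossing edges to nondegenerate subedges of $T$. Consequently, for any lift $\tilde\gamma$ of $\gamma$, the image $q(p(\tilde\gamma))$ is an edge path in $T$ with exactly $n$ edges. If $\gamma$ is in normal form, this edge path is tight: a backtrack at some pair $H_i, H_{i+1}$ would mean these two crossing edges lift to a pair of mutually inverse edges of $T$ joined by $v_i$, and since the restriction of the graph of spaces structure over an edge of $T$ is a double mapping cylinder on the corresponding edge space, such a backtrack forces $H_i v_i H_{i+1}$ to be path-homotopic to a nodal edge path, contradicting normal form. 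Because the tight edge path in a tree between two vertices is unique, $n$ equals the combinatorial distance in $T$ between the endpoints of any lift, which is an invariant of the path homotopy class in $X$. For the free loop version, the conjugacy class corresponds to an element of the deck group whose action on $T$ is either elliptic (in which case a cyclically reduced representative lies in a single vertex space and $n = 0$) or hyperbolic with a unique axis, in which case $n$ equals its translation length on $T$.

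The main obstacle is the reduction step in the existence argument: one must verify that whenever $H_i v_i H_{i+1}$ is homotopic rel endpoints to a nodal edge path, this homotopy can be realized as a local move inside $\mathcal{X}_{e_i} \cup \mathcal{X}_v \cup \mathcal{X}_{e_{i+1}}$ which only alters the displayed subpath. This is the topological manifestation of the defining groupoid relation $\bar e \, \iota_{\bar e}(g) \, e = \iota_e(g)$ from $\pi_1(\Gamma)$, realized by the double mapping cylinder structure on each $\mathcal{X}_e$; once this locality is in hand, the inductive termination and the backtracking argument in the covering space both go through routinely.
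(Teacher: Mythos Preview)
Your proposal is correct. The paper does not supply a proof of this lemma; it is stated with citations to Tretkoff and Bestvina--Feighn--Handel, followed only by the remark that normal form is reached by iteratively ``erasing a pair of crossing edges,'' which is exactly your existence procedure. Your invariance argument via projection of a lift to the Bass-Serre tree is the standard route and is correct; the paper gestures instead toward the equivalence with the fundamental-groupoid normal form, but these are two faces of the same fact.

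One remark on the obstacle you flag. For the existence reduction you do not need the homotopy carrying $H_i v_i H_{i+1}$ to a nodal path to be \emph{local} in $\mathcal{X}_{e_i}\cup\mathcal{X}_v\cup\mathcal{X}_{e_{i+1}}$: any homotopy rel endpoints in $X$ justifies the replacement, and the merged block $v_{i-1} w\, v_{i+1}$ automatically lies in a single vertex space because adjacent nodal edges share a vertex and distinct vertex spaces are disjoint. The genuinely nontrivial point sits in the \emph{converse} direction inside your tree argument---that a backtrack in $T$ forces $H_i v_i H_{i+1}$ to be homotopic to a nodal path---and there the work is done not by any locality in $X$ but by simple connectivity of $\tilde X$ together with connectedness of the vertex fibers of $\tilde X\to T$, both of which you already invoke.
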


The proof of this lemma also illustrates that an edge path can be taken to
normal form by iteratively \emph{erasing a pair of crossing edges}; if
$H_iv_iH_{i+1}$ is homotopic relative to the endpoints to a nodal edge path
$v_i'$ then the subpath $v_{i-1}H_iv_iH_{i+1}v_{i+1}$ is homotopic relative to
endpoints to $v_{i-1}v_i'v_{i+1}$ which can subsequently be tightened. Note
that a path is in normal form if and only if every sub-path is. This should be
compared to the normal form for arrows in the fundamental groupoid of a graph
of groups, indeed one proof of the groupoid normal form is to prove this normal
form and then apply the natural map from the fundamental groupoid of the total
space $X$ to the fundamental groupoid of the graph of groups in question.
\subsection{A core sampler}

Guirardel introduced the core of two real trees with group action to unify and
generalize several intersection and compatibility phenomena in group theory.

\begin{definition}[\cite{guirardel-core}]
	\label{def:core}
	The \emph{core} of two simplicial $F_r$-trees $A$ and $B$, $\core(A,B)$
	is the minimal non-empty closed subset of $A\times B$ with convex
	fibers invariant under the diagonal action of $G$. The \emph{augmented
	core} $\hatcore(A,B) \supseteq \core(A,B)$ is the minimal closed
	connected subset of $A\times B$ with convex fibers invariant under the
	diagonal action.
\end{definition}

\begin{remark}
	If $A$ and $B$ have minimal subtrees $A'$ and $B'$ then the core must
	be contained in $A'\times B'$.
\end{remark}

Guirardel works in the much more general setting of group actions on real
trees, but in this article we do not need to leave the cellular category;
Guirardel shows if $A$ and $B$ are simplicial $G$-trees then $\core(A,B)$ is a
square subcomplex of $A\times B$~\cite{guirardel-core}*{Proposition 2.6}.
Further, for irreducible trees, the core is always non-empty, though it is not
always connected.

The diagonal action of $F_r$ on $\core(A,B)$ induces a notion of covolume,
while this notion is not well behaved in general, in the simplicial setting $\vol(\core)$ is the total
metric area of $\core/F_r$ (the number of squares when all edges of $A$ and $B$
have length one). Without a condition on the edge stabilizers of $A$
and $B$ this may be infinite, but we are concerned with the other extreme.

\begin{definition}\label{def:isect}
	The \emph{intersection number} of two simplicial $F_r$-trees $A$ and $B$ is
	\[ i(A,B) = \vol(\core(A,B)).\]
\end{definition}

For simplicial $F_r$-trees, the intersection number quantifies the (non)-existence of a
common refining tree. Given two simplicial $F_r$ trees $A$ and $B$, we say that
$T$ is a \emph{common refinement} of $A$ and $B$ if there are equivariant
surjections $f_A: T\to A$ and $f_B: T\to B$ that \emph{preserve alignment}, the image of
every geodesic $[p,q]$ is $[f_S(p),f_S(q)]$ with $S$ either $A$ or $B$. These
maps arise from equivariantly collapsing edges. 

\begin{theorem}[\cite{guirardel-core}*{Theorem 6.1}]\label{thm:core-refine}
	Simplicial $F_r$-trees $A$ and $B$ have a common refinement if and only
	if $i(A,B) = 0$. In this case $\hatcore(A,B)$ is a common refinement.
\end{theorem}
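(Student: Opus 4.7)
The plan is to prove each direction by leveraging the minimality clause in the definition of $\hatcore(A,B)$. For the forward implication, suppose $T$ is a common refinement with equivariant alignment-preserving surjections $f_A\colon T\to A$ and $f_B\colon T\to B$. I would form the equivariant continuous diagonal map $\phi\colon T\to A\times B$ by $\phi(x)=(f_A(x),f_B(x))$, and observe that its image $\phi(T)$ is closed, connected, and $F_r$-invariant. Alignment-preservation forces the point-preimages of $f_A$ and $f_B$ to be subtrees of $T$ (for if $a,b$ lie in a common preimage, then $f_\bullet([a,b])=[f_\bullet(a),f_\bullet(b)]$ is a single point), so the two coordinate projections of $\phi(T)$ have connected, hence convex, fibers. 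The minimality defining $\hatcore$ then forces $\hatcore(A,B)\subseteq\phi(T)$, and since $\phi(T)$ is one-dimensional, $\core(A,B)\subseteq\hatcore(A,B)$ carries no 2-cells; thus $i(A,B)=\vol(\core(A,B))=0$.

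For the reverse implication, suppose $i(A,B)=0$, so $\core(A,B)$ contains no square. I would proceed in three steps. First, verify that $\hatcore(A,B)$ is a simplicial tree: it is one-dimensional because the cells added to $\core$ to form $\hatcore$ lie inside horizontal lines $A\times\{b\}$ and vertical lines $\{a\}\times B$, and any loop in the 1-complex $\hatcore\subseteq A\times B$ must project to nullhomotopic loops in each of the trees $A$ and $B$, so is itself nullhomotopic in the product $A\times B$. Second, identify the coordinate projections $p_A,p_B\colon\hatcore\to A,B$ as the candidate refinement maps; equivariance is immediate from the diagonal action, while the image of $p_A$ (respectively $p_B$) is a nonempty, $F_r$-invariant subtree of $A$ (respectively $B$), which equals $A$ (respectively $B$) once one has restricted to the minimal subtree. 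Third, verify alignment preservation, which reduces to checking that the fiber of each $p_\bullet$ is a subtree of $\hatcore$; this is exactly the convex-fiber condition built into the definition of $\hatcore$, so any geodesic $[p,q]\subseteq\hatcore$ traverses $p_A([p,q])$ monotonically and thus maps onto the geodesic $[p_A(p),p_A(q)]$.

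The main obstacle is showing that $\hatcore$ is genuinely a tree once $i(A,B)=0$. The delicate point is that $\core(A,B)$ need not be connected, so one must engage with Guirardel's prescription for augmenting $\core$ by horizontal and vertical connector arcs and verify that this minimal augmentation produces no cycles, even while passing between disconnected components. Once that is in hand, the remaining verifications that $p_A$ and $p_B$ are alignment-preserving surjections become formal consequences of the convex-fiber condition and the minimality of $A$ and $B$, so $\hatcore(A,B)$ serves tautologically as a common refinement.
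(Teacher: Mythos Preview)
The paper does not supply a proof of this theorem; it is quoted from Guirardel~\cite{guirardel-core}*{Theorem 6.1} as a black box and invoked later (e.g.\ in Lemmas~\ref{lem:core-bs} and~\ref{lem:upg-compat}). So there is no ``paper's own proof'' to compare against, and your proposal must stand on its own.

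Your forward direction is sound. The diagonal map $\phi\colon T\to A\times B$ built from the two collapse maps has image a $1$-complex (on each edge of $T$ the map is either constant, horizontal, vertical, or a diagonal segment), and alignment preservation does give convex fibers for both coordinate projections. Minimality of $\hatcore$ then yields $\hatcore\subseteq\phi(T)$, hence no squares.

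In the reverse direction there is a genuine gap in the sketch you give for simple connectivity. You argue that a loop in $\hatcore$ projects to nullhomotopic loops in $A$ and $B$, hence is nullhomotopic in $A\times B$. That is true but proves nothing about $\hatcore$: a circle in $\mathbb{R}^2$ also has nullhomotopic projections. What is actually needed is that a $1$-dimensional subcomplex of $A\times B$ with \emph{convex fibers over both factors} contains no embedded circle. This is where the convex-fiber hypothesis does work, not merely the contractibility of the ambient product: an embedded circle would have two distinct points with the same $A$-coordinate, and the two arcs of the circle between them would both have to lie in the (convex, hence tree-shaped) fiber, forcing them to coincide. You correctly flag this step as the main obstacle, but the mechanism you propose for it is the wrong one; the right mechanism is already present in the defining properties of $\hatcore$ and does not require revisiting Guirardel's connector-arc construction in detail. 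Once $\hatcore$ is known to be a tree, your remaining steps (surjectivity from minimality, alignment preservation from convex fibers) are correct.
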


In a previous paper~\cite{compat-trees} we give some equivalent
characterizations of compatibility for irreducible $F_r$-trees that are useful for explicit computations
(one of these generalizes a criterion of Behrstock, Bestvina, and
Clay~\cite{bbc}).
Let $e\subset T$ be an oriented edge in a simplicial $F_r$-tree. Let
$\delta_e^+$ be the connected component of $T\setminus e^\circ$ containing
$t(e)$. The \emph{asymptotic horizon} of $e$ is the set of group elements
\[ \ebox{e} = \{ g\in F_r | C_g^T\cap \delta_e^+ \text{ is a positive ray}\} \]

\begin{lemma}[\cite{compat-trees}]
	\label{lem:compat-tree}
	Suppose $A$ and $B$ are irreducible simplicial $F_r$-trees. The following are
	equivalent.
	\begin{enumerate}
		\item $A$ and $B$ are not compatible.
		\item There are edges $a\in E(A)$ and $b\in E(B)$ such that the
			four sets
			\[ \ebox{a}\cap\ebox{b}, \ebox{\bar{a}}\cap\ebox{b},
			\ebox{a}\cap\ebox{\bar{b}},\ebox{\bar{a}}\cap\ebox{\bar{b}}
			\]
			are all non-empty.
		\item There are group elements $g, h\in F_r$ such that 
			\begin{align*} 
				\ell_A(gh) = \ell_A(gh^{-1}) >
				\ell_A(g)+\ell_A(h)&\qquad\text{and}\qquad
				\ell_B(gh)\neq \ell_B(gh^{-1}) \\
				&\text{or}\\
				\ell_B(gh) = \ell_B(gh^{-1}) >
				\ell_B(g)+\ell_B(h)&\qquad\text{and}\qquad
				\ell_A(gh)\neq \ell_A(gh^{-1}). \\
			\end{align*}
	\end{enumerate}
\end{lemma}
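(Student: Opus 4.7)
The plan is to prove the equivalences $(1) \Leftrightarrow (2)$ and $(2) \Leftrightarrow (3)$, the first by reorganizing Guirardel's core theorem and the second by translation-length calculations. For $(1) \Leftrightarrow (2)$, I would invoke Theorem~\ref{thm:core-refine}: $A$ and $B$ are incompatible iff $\vol(\core(A,B)) > 0$. Since $\core(A,B)$ is a square subcomplex of $A\times B$, this is equivalent to the core containing a closed 2-cell $a\times b$ for some $a \in E(A), b\in E(B)$. Guirardel's characterization identifies $\core(A,B)$ with the closure of $\bigcup_{g\in F_r}C_g^A\times C_g^B$, and the square $a\times b$ lies in the core iff each of the four open quadrants $\delta_a^{\pm}\times\delta_b^{\pm}$ of $A\times B$ meets some $C_g^A\times C_g^B$. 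A product axis enters $\delta_a^+\times\delta_b^+$ along positive rays exactly when $g\in\ebox{a}\cap\ebox{b}$; the other three sign combinations translate analogously, yielding (2).

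For $(2) \Rightarrow (3)$, one picks witnesses in each of the four asymptotic horizon intersections, replaces them with high powers (which preserve asymptotic horizon membership via the no-obtrusive-powers property of very small trees), and assembles a product word so that the resulting group elements $g$ and $h$ have axes in $A$ that are disjoint and arranged symmetrically across $a$, while their axes in $B$ share a positive ray to one side of $b$. Lemma~\ref{lem:cmfd} then yields $\ell_A(gh) = \ell_A(gh^{-1}) > \ell_A(g)+\ell_A(h)$ from the symmetric $A$-configuration, while the orientation-sensitive cancellation along the shared $B$-ray forces $\ell_B(gh)\neq\ell_B(gh^{-1})$. Lemma~\ref{lem:bclf} bounds the error terms once the exponents are chosen large enough relative to the bounded cancellation constants of $A$ and $B$.

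For $(3) \Rightarrow (2)$, the equality $\ell_A(gh)=\ell_A(gh^{-1})>\ell_A(g)+\ell_A(h)$ forces $C_g^A$ and $C_h^A$ to be disjoint with a positive-length bridge, so any edge $a$ in the interior of this bridge witnesses $g^{\pm 1}\in\ebox{a}$ and $h^{\pm 1}\in\ebox{\bar{a}}$ (after relabeling orientations), populating all four $\ebox{\pm a}$ possibilities. The $B$-inequality rules out the same symmetric-disjoint configuration in $B$, and via Culler-Morgan fundamental domains it locates an edge $b$ on $C_{gh}^B$ (or $C_{gh^{-1}}^B$) across which the four elements $g^{\pm 1}, h^{\pm 1}$ distribute asymmetrically, so that all four sign intersections $\ebox{\pm a}\cap\ebox{\pm b}$ are non-empty.

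\textbf{The main obstacle} is the $(3)\Rightarrow(2)$ direction, where the $B$-inequality must be unpacked according to whether $C_g^B$ and $C_h^B$ are equal, share a ray, share only a bounded segment, or are disjoint: the location of the witnessing edge $b$ depends on the case, and one must verify uniformly across configurations that all four asymptotic horizon intersections are populated. The $(2)\Rightarrow(3)$ step is largely computational but still requires the word used to build $g$ and $h$ be optimized so that the estimate of Lemma~\ref{lem:bclf} separates the two $B$-length computations cleanly rather than absorbing the asymmetry into the cancellation error.
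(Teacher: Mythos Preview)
The paper does not prove this lemma; it is quoted from the author's companion paper~\cite{compat-trees} and stated without proof here. So there is no in-paper argument to compare against, only the student's attempt to supply one.

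Your outline for $(1)\Leftrightarrow(2)$ is essentially the right idea, though the sentence identifying $\core(A,B)$ with the closure of $\bigcup_g C_g^A\times C_g^B$ is not Guirardel's characterization; what you want is his light/heavy quadrant criterion, which is precisely the asymptotic-horizon reformulation you give in the next sentence. Your $(3)\Rightarrow(2)$ sketch is also on the right track: the $A$-equality forces disjoint axes with a bridge, and the edge $a$ sits on that bridge; the case analysis on the $B$-side configuration is genuinely where the work lies, as you note.

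The problematic part is $(2)\Rightarrow(3)$. You invoke Lemma~\ref{lem:bclf} and the no-obtrusive-powers property of very small trees, but the hypothesis here is only that $A$ and $B$ are irreducible simplicial, not very small, so neither tool is available in the stated generality. More to the point, neither is needed: condition (3) involves exact length-function identities, not estimates up to a cancellation constant, so bounded cancellation is the wrong mechanism. The correct engine is the Culler--Morgan axis calculus: once you arrange $C_g^A\cap C_h^A=\emptyset$ (which you can do by choosing witnesses $g\in\ebox{a}\cap\ebox{b}$ and $h\in\ebox{\bar a}\cap\ebox{b}$ and, if their $A$-axes happen to meet, conjugating one by a suitable element fixing the relevant $B$-quadrant), the equality $\ell_A(gh)=\ell_A(gh^{-1})=\ell_A(g)+\ell_A(h)+2d(C_g^A,C_h^A)$ is exact. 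On the $B$-side, both axes cross $b$ coherently, and the Culler--Morgan formulas for overlapping coherently-oriented axes give $\ell_B(gh)\neq\ell_B(gh^{-1})$ directly (one is $\ell_B(g)+\ell_B(h)$, the other is strictly smaller). Replacing by high powers is harmless---powers share the same axis, so horizon membership is automatic without any stabilizer hypothesis---but it does not by itself produce the disjointness in $A$; that step needs a separate argument you have not supplied.
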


The third condition is called \emph{incompatible combinatorics} because of its
implications about the combinatorial arrangement of axes and $A$ and $B$.	

\subsection{The Bass-Serre case}

While not all useful stabilizer restrictions are retained by the core of
compatible trees, when $A$ and $B$ are compatible Bass-Serre trees for graph of
groups decompositions of $G$ the structure theory of the core permits a
very explicit description of the augmented core.

\begin{lemma}\label{lem:core-bs}
	Suppose $\bar{A}$ and $\bar{B}$ are minimal visible graphs of
	groups with fundamental group $G\ncong \mathbb{Z}$ or
	$\mathbb{Z}/2\mathbb{Z}\ast\mathbb{Z}/2\mathbb{Z}$,
	and compatible Bass-Serre trees $A$ and $B$.
	The augmented core $\hatcore(A,B)$ is then then
	the Bass-Serre tree for a graph of groups $\Gamma$ with fundamental
	group $G$, and the
	edge groups of $\Gamma$ are in the set of conjugacy classes of the
	edge groups of $\bar{A}$ and $\bar{B}$. Moreover, $\bar{A}$
	and $\bar{B}$ are equivalent
	to graphs of groups $\bar{A}'$ and $\bar{B}'$ so that
	\[\xymatrix{
			& \Gamma\ar[ld]_{\pi_{\bar{A}'}}\ar[rd]^{\pi_{\bar{B}'}} & \\
			\bar{A}' & & \bar{B}'
	}\]
	where $\pi_{\bar{A}'}$ and $\pi_{\bar{B}'}$ are quotient maps
	that collapse edges.
\end{lemma}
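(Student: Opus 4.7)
The plan is to push the entire statement through Theorem~\ref{thm:core-refine}. Since $A$ and $B$ are compatible, $i(A,B)=0$, so Theorem~\ref{thm:core-refine} gives that $\hatcore(A,B)$ is a common refinement of $A$ and $B$ via equivariant alignment-preserving surjections $f_A\colon \hatcore(A,B) \to A$ and $f_B\colon \hatcore(A,B) \to B$ that collapse edges. In particular $\hatcore(A,B)$ is a simplicial $G$-tree, so once we verify the action has no inversions (after a mild subdivision argument if necessary, exploiting that $A$ and $B$ have none), it is the Bass-Serre tree of the graph of groups $\Gamma := \hatcore(A,B)/G$, and the hypothesis that $G$ is not $\Z$ or $\Z/2\ast \Z/2$ rules out the degenerate cases flagged in the remark following Definition~\ref{def:gog-min}.

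For the claim on edge groups, I would classify the edges of $\hatcore(A,B)$ as either horizontal (those $f_A$ sends to an edge of $A$ and $f_B$ collapses) or vertical (the symmetric situation). Because $f_A$ is an equivariant surjection that preserves alignment and collapses only certain edges, the preimage in $\hatcore(A,B)$ of an open edge $a\subset A$ is a single open horizontal edge $e$. Any $g\in\Stab_A(a)$ must permute $f_A^{-1}(a)=\{e\}$, so (assuming no inversions) $\Stab_{\hatcore}(e)=\Stab_A(a)$; hence the stabilizer of $e$ is conjugate to an edge group of $\bar{A}$. The same argument for vertical edges gives edge groups conjugate to those of $\bar{B}$.

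To obtain the commuting diagram, the equivariant collapse maps $f_A$ and $f_B$ descend to graph maps $\pi_A\colon \Gamma \to A/G$ and $\pi_B\colon \Gamma \to B/G$ that collapse edges; the Bass-Serre graph of groups structure on $\Gamma$ pushes forward along these collapses to graph of groups structures $\bar{A}'$ on $A/G$ and $\bar{B}'$ on $B/G$ whose Bass-Serre trees are $A$ and $B$ respectively. Because $\bar{A}$ is visible and minimal, any two graph of groups structures on $A/G$ with Bass-Serre tree equivariantly isometric to $A$ are equivalent (this is the purpose of the visibility hypothesis: invisible vertices are the only obstruction to uniqueness up to equivalence), so $\bar{A}'$ is equivalent to $\bar{A}$, and likewise for $\bar{B}'$.

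The main obstacle I anticipate is bookkeeping the distinction between equality and equivalence of graphs of groups: specifically, checking that the edge and vertex group assignments produced by $\Gamma = \hatcore(A,B)/G$ agree (up to the choice of fundamental domain and the associated $\gamma_e$ elements) with $\bar{A}$ and $\bar{B}$, rather than merely representing trees equivariantly isometric to $A$ and $B$. This is exactly where the visibility assumption must be used with care; the rest of the argument is a direct unpacking of the common-refinement property provided by Theorem~\ref{thm:core-refine}.
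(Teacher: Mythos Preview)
Your overall structure matches the paper's: invoke Theorem~\ref{thm:core-refine} to get $\hatcore(A,B)$ as a tree with collapse maps to $A$ and $B$, compute edge stabilizers, then quote Bass-Serre theory for the quotient diagram. The difference lies in the stabilizer computation. The paper exploits the embedding $\hatcore(A,B)\subseteq A\times B$ directly: for an edge $e$ one has $\Stab_{\hatcore}(e)=\Stab_A(\pi_A(e))\cap\Stab_B(\pi_B(e))$ a priori, and then a convexity-of-fibers contradiction shows this intersection equals $\Stab_A(\pi_A(e))$ whenever $\pi_A(e)$ is an edge (if some $g$ fixed $\pi_A(e)$ but moved $\pi_B(e)$, the fiber over the midpoint of $\pi_A(e)$ would fail to be convex). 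Your route---observing that the preimage of an open edge under an equivariant edge-collapse map is a single open edge, hence $\Stab_A(a)$ must carry it to itself---is more elementary and sidesteps the product structure entirely; it buys you a proof that would work for any common refinement, not just the core.

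One correction: your horizontal/vertical dichotomy is not exhaustive. The paper records that edges of $\hatcore$ come in three flavors, $\{v_A\}\times e_B$, $e_A\times\{v_B\}$, and diagonals $\Delta\subseteq e_A\times e_B$; the diagonals are collapsed by neither projection. This does not break your argument, since a diagonal edge still has $f_A$ mapping it onto an edge of $A$, and your single-preimage reasoning applies verbatim (and incidentally forces $\Stab_A(e_A)=\Stab_B(e_B)$ for such edges). Just rephrase the classification as ``$f_A$ sends $e$ to an edge'' or ``$f_B$ sends $e$ to an edge'', and note that every edge of $\hatcore$ satisfies at least one since an edge collapsed by both projections would have both coordinates constant and hence be a point in $A\times B$.
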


\begin{proof}
	Guirardel proves that in this case the core is a common refinement and
	so $\hatcore(A,B)$ is a simplicial $F_r$-tree (Theorem
	\ref{thm:core-refine}). Moreover, by the convexity of the fibers of the
	projection maps, the edges of $\hatcore$ are of three forms
		\[ \{ v_A\} \times e_B, e_A\times
		\{v_B\}, \mbox{ or } \Delta\subseteq
		e_A\times e_B. \]
	where $v_T$ and $e_T$ are vertices and edges in the trees $A$ and
	$B$. Further, using the equivariant projections from the core $\pi_A$
	and $\pi_B$, we calculate stabilizers for each edge,
	$e\in \hatcore(A,B)$
		\[ \Stab_{\hatcore(A,B)}(e) =
		\Stab_{A}(\pi_{A}(e))\cap
		\Stab_{B}(\pi_{B}(e)). \]
	Suppose $\pi_{A}(e) = a \in E(A)$. We claim
		\[ \Stab_{\hatcore(A,B)}(e) =
		\Stab_{A}(a). \]
	Indeed, suppose there is some $g\in \Stab_{A}(a)$ but not
	in $\Stab_{B}(\pi_B(e))$. Let $p\in a$ be the midpoint and let
	$q\in \pi_{B}(e)$ be any point. The point $(p,q)$ is in the
	interior of $e$, and since $g$ is not in the stabilizer, $(p\cdot g,
	q\cdot g) = (p,q\cdot g)$ is disjoint from $e$. Both $(p,q), (p,g\cdot
	q)\in \pi^{-1}_{A}(p)$, which is convex. However, the
	path in $\hatcore(A,B)$ must pass through
	$o(e)$ or $t(e)$, neither of which is in $\pi^{-1}_{A}(p)$,
	a contradiction. Symmetrically, if $\pi_{B}(e) = b \in
	E(B)$ we find
		\[ \Stab_{\hatcore(A,B)}(e) =
		\Stab_{B}(b). \]
	
	The remainder of the lemma is then immediate from standard facts in
	Bass-Serre theory, with $\hatcore(A,B)$ the
	Bass-Serre tree of the desired graph of groups $\Gamma$. The graphs of
	groups $\bar{A}'$ and $\bar{B}'$ come from choosing a maximal tree and
	lift in $\Gamma$ and $\hatcore(A,B)$, and projecting.
\end{proof}

\begin{remark}
	This characterizes the edge groups of compatible graphs of groups: An
	edge group $\bar{A}_e$ is either conjugate to some $\bar{B}_e$ or contained within
	a conjugate of some $\bar{B}_v$, and vise-versa.
\end{remark}
\section{Outer automorphisms}
\label{sec:out}

By definition, the outer automorphism group $\Out(F_r) =
\mathrm{Aut}(F_r)/\mathrm{Inn}(F_r)$ of a free group $F_r$
is the automorphism group modulo the inner automorphisms. We briefly review
various topological perspectives on elements of $\Out(F_r)$, the classification
by growth, and some details about representatives of outer automorphisms of
linear growth.

\subsection{Topological representatives and growth}

Let $\Gamma$ be the realization of a graph with $\pi_1(\Gamma,v) = F_r$. An immersed
path $\gamma: [0,1]\to \Gamma$ is \emph{tight} if any lift
$\tilde{\gamma}:[0,1]\to\tilde{\Gamma}$ is an embedding. Since $\tilde{\Gamma}$
is a tree, it is immediate that every immersed path is homotopic relative to
the endpoints to a unique tight path, called its \emph{tightening}. Given a
path $\gamma$ we denote the tightening $[\gamma]$. Similarly, a closed loop is
tight if it is tight for every choice of basepoint, and is freely homotopic to
a unique tightening (a fundamental domain for the action of $\gamma_\ast\in
\pi_1(\Gamma)$ on the universal cover $\tilde{\Gamma}$, with basepoint chosen
on the axis of $\gamma_\ast$), the tightening of a loop $\gamma$ is denoted
$[[\gamma]]$. Two paths $\gamma$ and $\delta$ are composable if the end of
$\gamma$ equals the start of $\delta$, and their composition is denoted
$\gamma\delta$; if $\gamma$ is a based loop $\gamma^{-1}$ denotes its reverse
and $\gamma^m$ its $m$-fold concatenation for $m\in \mathbb{Z}$ (when $m=0$
this is a constant path at the basepoint of $\gamma$). A loop $\gamma$ is primitive if there is no
$\gamma'$ such that $[\gamma] = [\gamma'^m]$ for some $m>1$.  We will assume
from here on that all paths have endpoints at the vertices of $\Gamma$.

Given an outer automorphism $\sigma\in\Out(F_r)$, we can realize $\sigma$ as a
homotopy equivalence $\hat{\sigma}:\Gamma\to\Gamma$. Such a realization is
referred to as a \emph{topological representative}; particularly nice
topological representatives are indispensable in the analysis of outer
automorphisms. 

The growth of an outer automorphism is measured in terms of a topological
representative. We say $\sigma$ is \emph{exponentially growing} if there is
some loop $\gamma\subseteq \Gamma$ such that
$\ell_{\Gamma}([[\hat{\sigma}^n(\gamma)]])$ is bounded below by an exponential
function, and that $\sigma$ is \emph{polynomially growing} if there is some $d$
such that $\ell_{\Gamma}([[\hat{\sigma}^n(\gamma)]]) \in O(n^d)$ for all
loops $\gamma\subseteq \Gamma$. This classification does not depend on the
choice of topological representative, as demonstrated by Bestvina, Feighn, and
Handel~\cite{bfh-i}; the choice does matter for the details of the exponent in
the exponentially growing case, however we are not concerned with exponentially
growing outer automorphisms in this article.

Polynomially growing outer automorphisms can exhibit a certain amount of
finite-order periodic behavior which results in significant technical
headaches. These phenomena can be removed by passing to a uniform power. A
polynomially growing outer automorphism $\sigma$ is \emph{unipotent} if the
induced action on the first homology $H_1(F_r, \mathbb{Z})$ is a unipotent
matrix. Bestvina, Feighn, and Handel proved that any polynomially growing outer
automorphism that acts trivially on $H_1(F_r,\mathbb{Z}/3\mathbb{Z})$ is
unipotent~\cite{bfh-ii}*{Proposition 3.5}, so all polynomially growing outer
automorphisms have a unipotent power.

\subsection{Upper triangular representatives and the Kolchin theorem}

Unipotent polynomially growing outer automorphisms have particularly nice
topological representatives. A homotopy equivalence $\hat{\sigma}:\Gamma\to
\Gamma$ is \emph{filtered} if there is a filtration $\emptyset = \Gamma_0\subsetneq
\Gamma_1\subsetneq \cdots \subsetneq \Gamma_k = \Gamma$ preserved by
$\hat{\sigma}$.

\begin{definition}
	A filtered homotopy equivalence $\hat{\sigma}$ is \emph{upper triangular} if
	\begin{enumerate}
		\item $\hat{\sigma}$ fixes the vertices of $\Gamma$,
		\item Each stratum of the filtration
			$\Gamma_i\setminus\Gamma_{i-1} = E_i$ is a single
			topological edge,
		\item Each edge $E_i$ has a preferred orientation and with this
			orientation there is a tight closed path
			$u_i\subseteq\Gamma_{i-1}$  based
			at $t(E_i)$ so that $\hat{\sigma}(E_i) = E_iu_i$.
	\end{enumerate}
\end{definition}

The path $u_i$ is called the suffix associated to $u_i$, and when working with
an upper triangular homotopy equivalences we will always refer to edges of the
filtered graph with the preferred orientation.  Just as paths have tightenings,
if $\hat{\sigma}$ is a filtered homotopy equivalence that satisfies the above
definition except that some $u_i$ is not tight, $\hat{\sigma}$ is
homotopic to an upper triangular homotpy equivalence, also called its
tightening. A filtration assigns to each
edge a \emph{height}, the integer $i$ such that $E\in
\Gamma_i\setminus\Gamma_{i-1}$, and by taking a maximum this definition extends to
tight edge paths. An upper-triangular homotopy equivalence preserves the height of
each edge path.

Every upper triangular homotopy equivalence of a fixed filtered graph evidently
induces a unipotent polynomially growing outer automorphism, and using relative train tracks
Bestvina, Feighn, and Handel show the converse, every unipotent polynomially
growing outer automorphism has an upper triangular
representative~\cite{bfh-i}*{Theorem 5.1.8}. Moreover, for a given filtered
graph $\Gamma$ the upper-triangular homotopy equivalences taken up to homotopy
relative to the vertices form a group under composition. The suffixes for the
inverse are defined inductively up the filtration by $\hat{\sigma}^{-1}(E_i) =
E_iv_i$ where $v_i = \overline{\hat{\sigma}^{-1}(u_i)}$.

A nontrivial path $\gamma\subseteq \Gamma$ is a \emph{periodic Nielsen path}
for $\hat{\sigma}$ if for some $m > 0$, we have $[\hat{\sigma}^m(\gamma)] =
[\gamma]$. If $m = 1$ we call $\gamma$ a \emph{Nielsen path}. An \emph{exceptional
path} in $\Gamma$ is a path of the form $E_i\gamma^m\bar{E}_j$, where $\gamma$ is
a primitive Nielsen path, and $\hat{\sigma}(E_i) = E_i\gamma^p$ and
$\hat{\sigma}(E_j) = E_j\gamma^q$ for $p,q > 0$ and any $m$. For a unipotent
polynomially growing automorphism, every closed periodic Nielsen path is Nielsen~\cite{bfh-ii}*{Proposition
3.16}. If $p\neq q$ we say the exceptional path is \emph{linearly growing}, otherwise
it is an \emph{exceptional Nielsen path}.

Every path $\gamma \subseteq \Gamma$ has a \emph{canonical decomposition} with
respect to an upper triangular $\hat{\sigma}$ into single edges and maximal
exceptional paths~\cite{bfh-ii}*{Lemma 4.26}.

For all of the terms in the previous two paragraphs, when we are dealing with
more than one upper-triangular homotopy equivalence we will specify which
homotopy equivalence is involved, e.g. ``a path $\gamma$ is
$\hat{\sigma}$-Nielsen'' or ``consider the $\hat{\tau}$-canonical decomposition of
$\gamma = \gamma_1\gamma_2\cdots\gamma_k$''.

The analogy between unipotent polynomially growing outer automorphisms and unipotent matrices
stretches beyond having an upper-triangular basis. The classical Kolchin
theorem for linear groups~\cite{kolchin} states that if a subgroup $H\leq GL(n,\mathbb{C})$
consists of unipotent matrices then there is a basis so that with respect to
this basis every element of $H$ is upper triangular with 1's on the diagonal.
There is an analogous theorem for unipotent polynomially growing outer
automorphisms, due to Bestvina, Feighn, and Handel.

\begin{theorem}[\cite{bfh-ii}*{Main Theorem}]\label{thm:kolchin}
	Suppose $H\leq \Out(F_n)$ is a finitely generated subgroup with every
	element unipotent polynomially growing. Then there is a filtered graph $\Gamma$ and a fixed
	preferred orientation such that every $\sigma\in H$ is upper triangular
	with respect to $\Gamma$.
\end{theorem}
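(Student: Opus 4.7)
The plan is to proceed by induction on the rank $r$, in analogy with the classical proof of the Kolchin theorem for linear groups. The base case $r = 1$ is trivial since $\Out(F_1)$ is finite, and the only unipotent polynomially growing element is the identity. For the inductive step, the strategy has three moves: locate an $H$-invariant conjugacy class of proper free factors, apply the inductive hypothesis to it, and extend the resulting filtered graph by a single top stratum that simultaneously serves every element of $H$.

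For the first move, I would exploit the unipotent hypothesis to produce a proper free factor fixed (up to conjugacy) by every element of $H$. Each $\sigma \in H$ acts trivially on $H_1(F_r, \Z)$, so $H$ preserves every flag of subgroups of $F_r$ that descends to a flag on the abelianization, with trivial induced action on each subquotient. Each individual $\sigma \in H$ admits an upper triangular representative by the existence theorem for such representatives (BFH Theorem 5.1.8), and the bottom stratum of such a representative exhibits an invariant free factor for $\sigma$ alone; the content of this step is to coordinate these across the finitely many generators of $H$ using the common fixed flag on homology to pick out a free factor $F' \leq F_r$ of rank $r-1$ that works for all of $H$ at once.

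Given such an $F'$, the inductive hypothesis yields a filtered graph $\Gamma_0$ with $\pi_1(\Gamma_0) \cong F'$ on which every element of $H|_{F'}$ is upper triangular. I would form $\Gamma$ by attaching a single edge $E$ to $\Gamma_0$ at a vertex $v$ fixed by every element of $H$, then, for each $\sigma \in H$, choose a representative with $\hat{\sigma}|_{\Gamma_0}$ equal to the inductive upper triangular representative and $\hat{\sigma}(E) = \alpha_\sigma E \beta_\sigma$ for tight paths $\alpha_\sigma, \beta_\sigma \subseteq \Gamma_0$. The unipotent hypothesis rules out the orientation-reversing possibility $\hat{\sigma}(E) = \alpha_\sigma \bar{E} \beta_\sigma$, since this would contribute $-1$ to a diagonal entry of the action on $H_1$.

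The main obstacle is to simultaneously arrange $\alpha_\sigma = \id$ for every $\sigma \in H$, so that $\hat{\sigma}(E) = E u_\sigma$ with $u_\sigma = \beta_\sigma$ a tight closed suffix based at $t(E) = v$. For a single $\sigma$ this reduces to a routine inner-automorphism adjustment, but coordinating across $H$ is the crux: conjugating to normalize one generator can spoil another. I expect to analyze the family $\{\alpha_\sigma\}$ as a crossed-homomorphism-type cocycle on $H$ valued in $\pi_1(\Gamma_0, v)$ and show that it is a coboundary, using the finiteness of the generating set together with further equivariant moves on $\Gamma_0$ (subdivision, change of maximal tree, blow-ups of fixed free factors) and a single inner automorphism adjustment. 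This is the technical heart of the argument, and the genuine work I expect to have to do.
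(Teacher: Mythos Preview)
The paper does not prove this theorem; it is quoted from \cite{bfh-ii} and used as a black box, so there is no proof here to compare against.

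Your proposal has a genuine gap at the first move. A flag of direct summands in $H_1(F_r,\Z)$ does not determine a flag of free factors of $F_r$: a primitive sublattice need not lift to a free factor at all (for instance $\langle a^2b\rangle \leq F_2$ surjects onto a rank-one summand of $\Z^2$ but is not a free factor), and when lifts exist they are far from unique. So the step ``use the common fixed flag on homology to pick out a free factor $F'$ of rank $r-1$ that works for all of $H$ at once'' does not go through. The existence of a common $H$-invariant proper free factor system is in fact one of the hard outputs of the Kolchin theorem, not an input available in advance; and there is no reason in general to expect an invariant free factor of corank one even when an invariant proper free factor system exists.

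For orientation, the actual argument in \cite{bfh-ii} is not an induction on rank of this shape. It is a fixed-point argument: using relative train tracks and the absence of exponentially growing strata, one shows that a finitely generated UPG subgroup fixes a point in (a suitable subdivision of) the spine of outer space, and the fixed simplex supplies the common marked filtered graph $\Gamma$. Your final cocycle discussion is loosely related to verifying that the action near the fixed point has the desired upper-triangular form, but that stage cannot be reached without first producing the fixed point, and that is where the substantial work in \cite{bfh-ii} lies.
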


\begin{remark}
	Bestvina, Feighn, and Handel use a different definition of
	upper-triangular, allowing that $\sigma(E_i) = v_iE_iu_i$, however our
	definition can be obtained by subdividing each edge and doubling the
	length of the filtration. 
\end{remark}

\subsection{Dehn twists and linear growth}
\label{sec:dtback}

Let $\Sigma$ be a closed hyperbolic surface. Given $\gamma\subseteq\Sigma$ an
essential simple closed curve, consider a homeomorphism $\tau_\gamma
:\Sigma\to\Sigma$ that is the identity outside an annular neighborhood of
$\gamma$ and performs a twist of $2\pi$ on the annulus. Such a homeomorphism is
known as a \emph{Dehn twist}. The induced map $\tau_{\gamma\ast} :
\pi_1(\Sigma)\to\pi_1(\Sigma)$ can be expressed in terms of the graph of groups
decomposition of $\pi_1(\Sigma)$ induced by $\gamma$, and this expression
motivates the following definition for general graphs of groups.

\begin{definition}
	Suppose $\Gamma$ is a graph of groups. Given a fixed 
	collection of edges $\{e_i\} \subseteq E(\Gamma)$ closed under
	the edge involution and $z_{e_i}\in
	Z(G_{e_i})$ satisfying $z_{\bar{e}_i} = z_{e_i}^{-1}$, the \emph{Dehn twist
	about $\{e_i\}$ by $\{z_i\}$}, $D_{z} \in \Out(\pi_1(\Gamma,v))$, is the
	outer automorphism induced by $\tilde{D}_z$ on the fundamental groupoid of
	$\Gamma$, given by
	\begin{align*}
		\tilde{D}_z(e_i) &= e_iz_i^{e_i} \\
		\tilde{D}_z(g) &= g, &g\in G_v, v\in V(\Gamma) \\
		\tilde{D}_z(e) &= e, &e\notin\{e_i\}
	\end{align*}
	The induced outer automorphism does not depend on the choice of
	basepoint.
\end{definition}

Note that $D_z^n = D_{z^n}$, defining $z^n = \{ z_{e_i}^n\}$ for any $n$, and that
any two twists on a fixed graph of groups $\Gamma$ commute. The requirement
that each $z_{e_i} \in Z(G_{e_i})$ is necessary to ensure that the defining
relations of the fundamental groupoid are respected. In turn, when
$\pi_1(\Gamma,v)$ is free a Dehn twist can only twist around edges with cyclic
stabilizers.

\begin{example}
	Let $\Gamma$ be the graph of groups associated to the amalgamated
	product $A\ast_C B$ and $z\in Z(C)$. The twist of $\Gamma$ about its
	edge by $z$ can be represented by $D_z(a) = z^{-1}az$, $a\in A$,
	$D_z(b) = b, b\in B$. Since $A\cup B$ generates $\pi_1(\Gamma,v)$
	this fully specifies the automorphism.

	Let $\mathcal{H}$ be the graph of groups associated to the HNN extension
	$A \ast_C$ and pick $z\in Z(C)$. The twist of $\mathcal{H}$ about its
	one edge by $z$ is represented by $D_z(a) = a$ and $D_z(t) = tz$ with
	$a\in A$ and $t$ the edge of the extension.

	Specializing these examples to splittings of $\pi_1(\Sigma)$ given by
	an essential closed curve in a closed hyperbolic surface $\gamma
	\subseteq \Sigma$, this gives the previously mentioned algebraic
	representation of $\tau_{\gamma\ast}$ as the Dehn twist about the edge
	of the splitting corresponding to $\gamma$ by
	$\gamma_\ast\in\pi_1(\Sigma)$.
\end{example}

\begin{example}[Nielsen automorphisms of $F_r$]
	Consider the graph of groups $\Gamma$ in Figure \ref{fig:gog-nielsen}.
	\begin{figure}
		\begin{center}
			\begin{tikzpicture}[thick,font=\small,every text node
	part/.style={align=center}]
	\node [vertex,label=below:{$v$ \\ $G_{v} = \langle
	a_1,\ldots,a_n\rangle$}] (P) at (0,-1) {};
	\draw (0,0) circle (1cm)  [postaction={decorate},decoration={
    markings,
	mark=at position 0.25 with {\arrow{>}}}];
	\coordinate (T) at (0,1);
	\node [above =2pt of T] {$G_t = \langle z\rangle$ \\ $t$};
\end{tikzpicture}
 			\caption{The graph of groups used to represent Nielsen
			automorphisms.}
			\label{fig:gog-nielsen}
		\end{center}
	\end{figure}
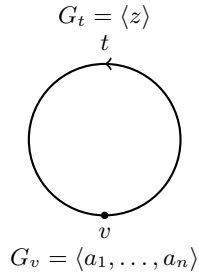
	The edge morphisms for the single edge are given by $\iota_t(z) = a_j$
	and $\iota_{\bar{t}}(z) = a_k$.
	The map $F:\langle x_1,\ldots, x_n\rangle \to \pi_1(\Gamma,v)$ given by
	$F(x_i) = a_i$, $i\neq j$, and $F(x_j) = t$ gives a realization of the
	Nielsen automorphism $\phi(x_i) = x_i, \phi(x_j) = x_kx_j$
	as the Dehn twist about the single edge by $z$.
\end{example}

A Dehn twist outer automorphism has many graph of groups representatives, most
of which are not well suited to analysis using the Guirardel core, due to lots
of extra information. Certain ill-behaved stabilizers, non-minimal graphs,
invisible vertices, and unused edges all cause trouble. Cohen and Lustig
identified a particularly useful class of representatives, called efficient
twists. 

\begin{definition}\label{def:eff}
	A Dehn twist $D$ on a graph of groups $\Gamma$ is \emph{efficient} if
	\begin{enumerate}
		\item $\Gamma$ is minimal, small, and visible,
		\item $D$ twists about every edge (every $z_e\neq \id$),
		\item (\emph{no positively bonded edges}) there is no pair of
			edges $e_1,e_2 \in E(\Gamma)$ such that $v=t(e_1)
			= t(e_2)$, and integers $m,n\neq 0$ with $mn > 0$, such
			that $z_{e_1}^m$ is conjugate in $G_v$ to $z_{e_2}^n$.
	\end{enumerate}
\end{definition}

Cohen and Lustig remark that it is a consequence of these three properties that
$\Gamma$ is necessarily very small. Returning our attention to
$\Out(F_r)$ a Dehn twist outer automorphism $D\in\Out(F_r)$ is one that can be
represented as a Dehn twist of some graph of groups decomposition of $F_r$
(such a decomposition necessarily only twists about those edges with cyclic
edge groups). These outer automorphisms have linear growth (and all outer
automorphisms with linear growth are roots of Dehn
twists~\cite{margalit-schleimer}).

By assigning each edge of a graph of groups $\Gamma$ a positive length, the Bass-Serre
tree $T$ of $\Gamma$ becomes a metric $F_r$-tree. Given a very small graph of
groups $\Gamma$ with fundamental group $F_r$, the
collection of projective classes of all choices of metric on $T$ determines an open
simplex $\Delta(\Gamma)\subseteq\overline{CV}_r$ in projectivized outer space.
If $\Gamma$ is visible and minimal, this simplex is of dimension
$|E(\Gamma)|-1$. When $D$ is an efficient Dehn twist on $\Gamma$, the simplex
$\Delta(\Gamma)$ is completely determined by the dynamics of the action of $D$
on $CV_r$, as shown by Cohen and Lustig~\cite{cohen-lustig}.

\begin{theorem}[\cite{cohen-lustig}*{Theorem 13.2}]\label{thm:parabolic-orbits}
	Suppose $D$ is a Dehn twist in $\Out(F_r)$ with an efficient
	representative on a graph of groups $\Gamma$. Then for all $[T]\in
	CV_r$,
	\[ \lim_{n\to\infty} D^n([T]) = \lim_{n\to\infty} D^{-n}([T]) \in
	\Delta(\Gamma). \]
\end{theorem}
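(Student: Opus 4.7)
The plan is to work entirely with length functions, using the embedding $\overline{CV}_r \hookrightarrow \mathbb{P}\mathbb{R}^{F_r}$. For each conjugacy class $g \in F_r$ I would compute
\[ \lim_{n\to\infty} \frac{\ell_T(D^n g)}{n} \;=\; \sum_{e} n_e(g)\, \ell_T(z_e), \]
where $n_e(g)$ counts the occurrences of $e$ in the cyclically reduced Bass-Serre normal form of $g$ with respect to $\Gamma$. The right hand side is exactly the translation length of $g$ in the Bass-Serre tree of $\Gamma$ with the edge metric $d(e) = \ell_T(z_e)$. Because $T$ is free simplicial and efficiency forces $z_e \neq \id$ for every edge, each $\ell_T(z_e) > 0$, so this metric defines a bona fide point of the open simplex $\Delta(\Gamma)$, and that projective class is the claimed limit of $D^n[T]$.

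To prove the displayed formula, I would first fix a cyclically reduced Bass-Serre representative of $g$, say $g = e_1 s_1 e_2 s_2 \cdots e_k s_k$. The groupoid formula $\tilde D(e_i) = e_i \iota_{e_i}(z_{e_i})$ gives a cyclically reduced representative of the form $D^n(g) = e_1 z_{e_1}^n s_1 \cdots e_k z_{e_k}^n s_k$, since $z_{e_i}^n \in \iota_{e_i}(G_{e_i})$ and the $s_i$ were chosen outside the edge groups at points of backtracking. The upper bound $\ell_T(D^n g) \leq n \sum_i \ell_T(z_{e_i}) + O_g(1)$ follows immediately from the subadditivity of translation along a path in $T$. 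For the lower bound I would choose a Schreier basis $\Lambda$ of $F_r$ adapted to $\Gamma$ so that every concatenation between adjacent pieces of this representative is reduced and cyclically reduced in $\Lambda$, and then iterate Lemma~\ref{lem:bclf} across each of the $2k$ breaks to obtain
\[ \ell_T(D^n g) \;\geq\; n \sum_i \ell_T(z_{e_i}) \;+\; \sum_i \ell_T(e_i s_i) \;-\; 2k\,C(\Lambda,T). \]
Dividing by $n$ and letting $n \to \infty$ squeezes out the desired formula. Since $D^{-1}$ is the twist by $\{z_e^{-1}\}$ and $\ell_T(z_e^{-1}) = \ell_T(z_e)$, running the same argument on $D^{-n}$ produces the identical limiting metric, and hence the same point of $\Delta(\Gamma)$.

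The main obstacle is manufacturing the basis $\Lambda$ that supports this bounded-cancellation estimate with a constant uniform in $n$---precisely the sort of control on Lipschitz maps $f: S_\Lambda \to T$ the author flags immediately after Lemma~\ref{lem:bclf}. The transverse Bass-Serre normal form from Section~\ref{sec:bs-theory} is built for exactly this situation: a Schreier basis of $F_r$ adapted to $\Gamma$ induces compatible minimal bases and preferred right transversals for the vertex and edge groups, so by construction each $s_i$ and each $z_{e_i}$ is already a reduced $\Lambda$-word, and every vertex-group product $z_{e_i}^n s_i$ appearing in $D^n(g)$ remains reduced for all $n$. Efficiency of $\Gamma$, in particular the absence of positively bonded edges, is the ingredient that prevents successive twisters $z_{e_i}^n$ and $z_{e_{i+1}}^n$ sharing a common vertex group from interacting across a backtrack in a way that would produce super-linear cancellation. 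Once these two features are in hand the proof is a direct application of Lemma~\ref{lem:bclf} and a reading off of the Bass-Serre metric.
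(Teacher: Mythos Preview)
The paper does not prove this theorem; it is quoted from Cohen and Lustig and used as a black box, so there is no argument in the paper to compare against. Your approach is genuinely different from Cohen and Lustig's original, which proceeds through their skyscraper construction rather than through bounded cancellation. Your route is shorter and meshes well with the tools of Section~\ref{sec:tree-core}---indeed the displayed estimates in Lemmas~\ref{lem:dt-free} and~\ref{lem:dt-cycle-free} already use the same idea in a special case---while Cohen and Lustig's argument extracts finer metric information along the way.

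There is, however, a real gap in the step ``iterate Lemma~\ref{lem:bclf} across each of the $2k$ breaks.'' Lemma~\ref{lem:bclf} bounds $\ell_T(gh)$ for a product of \emph{two} factors under the hypothesis that $gh$ is cyclically reduced; once you peel off the first factor, the remaining product $p_2\cdots p_{2k}$ is typically not cyclically reduced, so the hypothesis fails and you cannot re-apply the lemma. What does iterate is the displacement estimate hidden in its proof, i.e.\ Lemma~\ref{lem:bc2} applied directly to the map $f:S_\Lambda\to T$. If $q_0,\ldots,q_{2k}$ are the successive breakpoints along the axis $C^{S_\Lambda}_{D^n g}$, bounded cancellation places each $f(q_i)$ within $B$ of the geodesic $[f(q_0),f(q_{2k})]$, and the tree inequality $d(x,z)\geq d(x,y)+d(y,z)-2B$ (valid whenever $y$ lies in the $B$-neighbourhood of $[x,z]$) telescopes to give
\[ d\bigl(f(q_0),f(q_{2k})\bigr)\ \geq\ \sum_i \ell_T(p_i)\ -\ 2(2k-1)B. \]
Combining this with the reduction in the proof of Lemma~\ref{lem:bclf} from arbitrary $q$ to points on the axis yields the translation-length lower bound you need, with an error term independent of $n$. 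You should state and prove this $m$-factor version rather than invoke the two-factor lemma; once that is done, the rest of your outline goes through.
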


\begin{corollary}\label{cor:et-uniq}
	If $D\in\Out(F_r)$ has an efficient Dehn twist representative,
	then the simplicial structure of the Bass-Serre tree of the
	representative is unique.
\end{corollary}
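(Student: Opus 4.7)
The plan is to deduce uniqueness of the simplicial Bass-Serre tree directly from the dynamical characterization provided by Theorem~\ref{thm:parabolic-orbits}, using the standard fact that open simplices in $\overline{CV}_r$ associated to distinct visible minimal simplicial structures are pairwise disjoint.

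Suppose, toward a contradiction, that $D \in \Out(F_r)$ admits two efficient Dehn twist representatives on graphs of groups $\Gamma_1$ and $\Gamma_2$, whose Bass-Serre trees $T_1$ and $T_2$ are not equivariantly isomorphic as simplicial $F_r$-trees. The projective classes of metrics on $T_i$ sweep out the open simplex $\Delta(\Gamma_i) \subseteq \overline{CV}_r$, and by visibility and minimality of $\Gamma_i$ (which are part of efficiency, Definition~\ref{def:eff}), each $\Delta(\Gamma_i)$ has dimension $|E(\Gamma_i)|-1$ and is determined by the equivariant simplicial isomorphism type of $T_i$. In particular, distinct simplicial types yield disjoint open simplices, so under our assumption $\Delta(\Gamma_1) \cap \Delta(\Gamma_2) = \emptyset$.

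Now pick any $[T] \in CV_r$ and apply Theorem~\ref{thm:parabolic-orbits} to $D$ via each of the two efficient representatives. The theorem asserts that $\lim_{n \to \infty} D^n([T])$ exists in $\overline{CV}_r$ and lies in $\Delta(\Gamma_1)$ using the first representative, while the very same limit lies in $\Delta(\Gamma_2)$ using the second. Hence $\Delta(\Gamma_1) \cap \Delta(\Gamma_2) \neq \emptyset$, contradicting the previous paragraph. Therefore $T_1$ and $T_2$ are equivariantly isomorphic as simplicial $F_r$-trees, proving the corollary.

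I do not expect serious obstacles here; the argument is a one-line consequence of Theorem~\ref{thm:parabolic-orbits} once one recalls that visible minimal graph-of-groups structures are recovered from their Bass-Serre trees (the role of visibility, Definition~\ref{def:gog-visible}, is precisely to prevent barycentric-subdivision ambiguities so that the open simplex $\Delta(\Gamma)$ uniquely encodes the simplicial type). The only point to be careful about is that the conclusion is about the \emph{simplicial} structure of the Bass-Serre tree, not the graph of groups labelling, so no claim is being made that the edge and vertex group data of $\Gamma_1$ and $\Gamma_2$ agree beyond what is forced by the simplicial tree; this is exactly what the corollary asserts, and the argument delivers exactly this.
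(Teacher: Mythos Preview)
Your argument is correct and is essentially the paper's own proof, phrased contrapositively: both use Theorem~\ref{thm:parabolic-orbits} to produce a common point of $\Delta(\Gamma_1)$ and $\Delta(\Gamma_2)$, then invoke the fact that open simplices in $\overline{CV}_r$ for visible minimal graphs of groups either coincide or are disjoint. The paper states this as ``two open simplices which share a point are equal,'' while you argue by contradiction from disjointness; the content is identical.
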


\begin{proof}
	Suppose $D$ has efficient representatives $D_1$ on $\Gamma_1$ and $D_2$
	on $\Gamma_2$. By the theorem, $\Delta(\Gamma_1) = \Delta(\Gamma_2)$
	since two open simplices which share a point are equal.
	This completes the claim.
\end{proof}

The efficient graph of groups representative of a Dehn twist can be constructed from an
upper-triangular representation. Bestvina, Feighn, and Handel give this
construction in the metric category, using a particular upper-triangular
representation that permits them to compute metric information about the
limit in $\overline{CV}_r$, but the uniqueness of the algebraic structure
permits the calculation from any upper-triangular representation. First note
that an upper-triangular homotopy equivalence grows linearly if and only if
each suffix is Nielsen, and that each edge is either fixed or grows linearly.

To construct the efficient representative from an upper-triangular
representative we need the notion of folding in a tree or graph, due to
Stallings~\cite{stallings-folds}. In a simplicial $F_r$-tree $T$, a \emph{fold}
of two edges $u,v\in T$ with $o(u) = o(v)$ for a linear homeomorphism $\phi:
u\to v$ is the quotient of $T$ by the smallest equivalence relation satisfying
$x\sim \phi(x)$ for all points $x\in u$ and if $x\sim y$ and $g\in F_r$ then
$x.g\sim y.g$. The quotient map of this equivalence $\tilde{f}: T\to T/\sim$ is
called the \emph{folding map}, and the resulting space $T/\sim$ is a $F_r$-tree
(it may be necessary to subdivide to ensure that the action is without
inversions). When the action on the folded tree $T/\sim$ is without inversions,
we get a graph of groups morphism on the quotient $f:\bar{T}\to
\overline{T/\sim}$.

Let $q: T\to \bar{T}$ be a graph of groups quotient map. There is a particular
type of fold we treat in detail. Suppose there is an element $g\in G$ such that
the folding homeomorphism $\phi: u\to v$ is induced by the $g$ action. In this
case $g\in \bar{T}_{o(u)}$ and $g$ conjugates $\Stab(u)$ to $\Stab(v)$. The folded
graph of groups $\overline{T/\sim}$ has the same combinatorial structure as
$\bar{T}$, however $\Stab(u/\sim) = \langle \Stab(u), g\rangle$, so that
$u/\sim$ has a larger edge group. This is
referred to as ``pulling an element in a vertex group over an edge".

By subdividing an edge we may perform a \emph{partial fold} of the
first half of $u$ over $v$. (Partial folding can be discussed in much greater
generality; we require only the midpoint version.) We will often specify a fold
by a pair of edges $u$ and $v$ with $o(u)=o(v)$ in the quotient graph of groups, it is understood that we mean the
equivariant fold of all pairs of lifts $\tilde{u},\tilde{v}$ with $o(\tilde{u})
= o(\tilde{v})$. The
definition of folding generalizes to allow $v$ to be an edge path, and we use
this more general definition.

\begin{lemma}\label{lem:efficient-tt}
	Suppose $\hat{\sigma}:\Gamma\to\Gamma$ is a linearly growing
	upper-triangular homotopy equivalence of a filtered graph $\Gamma$.
	Then there is an $F_r$-tree $T$ and a composition of folds and
	collapses $f : \tilde{\Gamma}\to T$ which
	realizes the outer automorphism represented by
	$\hat{\sigma}$ as an efficient Dehn twist on the graph of groups
	quotient $\bar{T}$.
\end{lemma}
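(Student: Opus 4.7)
The plan is to build $T$ from $\tilde\Gamma$ by folds guided by the filtration, finish with equivariant collapses, and then verify that the resulting graph of groups is efficient. First I would record what linear growth gives. Since $\hat\sigma$ is linearly growing and upper triangular, each suffix $u_i$ is Nielsen, and because $t(E_i)$ is a fixed vertex and $u_i\subseteq\Gamma_{i-1}$ the suffix represents a well-defined element $u_{i\ast}\in F_r$ fixed by $\hat\sigma_\ast$. Classify $E_i$ as \emph{fixed} if $u_i$ is trivial and \emph{twisted} otherwise, and note that $\hat\sigma$ then reads as the composition of unit twists $E_i\mapsto E_iu_i$ on twisted edges with the identity on fixed edges.

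Processing up the filtration, suppose inductively that a tree $T_{<i}$ and an equivariant map $\tilde\Gamma_{i-1}\to T_{<i}$ have been constructed. For a twisted edge $E_i$, pick a lift $\tilde E_i$; since $u_i\subseteq\Gamma_{i-1}$ the element $u_{i\ast}$ already acts on $T_{<i}$, and I would perform the equivariant fold recalled after Definition~\ref{def:gog-visible} that pulls $u_{i\ast}$ over $E_i$, producing a cyclic edge stabilizer $\langle u_{i\ast}\rangle$ on the image of $\tilde E_i$. Fixed edges are passed through unchanged and collapsed equivariantly at the end. The composite $f\colon\tilde\Gamma\to T$ of all these folds and the final collapse yields a graph of groups $\bar T$ with one edge per twisted $E_i$, edge group $\langle u_{i\ast}\rangle$, and vertex groups the free subgroups of $F_r$ carried by the components of collapsed fixed edges. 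The Dehn twist $D$ on $\bar T$ defined by $z_{e_i}=u_{i\ast}$ twists each image edge by the same element that $\hat\sigma$ does, so a normal-form comparison via Lemma~\ref{lem:gos-nf} shows $D$ and $\hat\sigma$ determine the same outer automorphism.

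Finally I would verify the efficiency conditions of Definition~\ref{def:eff}. Smallness and nontriviality of each twist are built into the construction, while visibility and minimality can be achieved by a cleanup pass that collapses any invisible vertices and prunes edges left unused. The hard part is the no-positively-bonded-edges condition: if twisted edges $E_i$ and $E_j$ meet at a common terminal vertex with $u_{i\ast}^m$ conjugate in the vertex group to $u_{j\ast}^n$ for some $mn>0$, an additional equivariant fold must be introduced to merge them into a single edge. I expect this to be the main obstacle, and I would attack it by downward induction on filtration height using the canonical decomposition of each $u_i$ in $\Gamma_{i-1}$ to show that only finitely many corrective folds are required before no positively bonded configurations remain. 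Absorbing these folds into $f$ yields the desired efficient Dehn twist representative.
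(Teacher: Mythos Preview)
Your outline has the right ingredients but two concrete problems prevent it from working as stated.

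First, the order of operations: pulling $u_{i\ast}$ over $E_i$ is only defined when $u_{i\ast}$ lies in the vertex stabilizer at $t(\tilde E_i)$, i.e.\ when $u_{i\ast}$ is already elliptic in the tree you are folding. In your scheme the fixed edges are collapsed only at the end, so at the moment you process $E_i$ the suffix $u_i$ may still be supported on uncollapsed free edges and $u_{i\ast}$ is hyperbolic; the fold you invoke is then unavailable. The paper handles this by collapsing all fixed edges \emph{before} pulling any suffixes over (its Step~3 begins with the collapsed graph of groups $\bar T^0$ and only then pulls elements over edges, working up the filtration). Relatedly, you pull $u_{i\ast}$ itself rather than its primitive root; this produces obtrusive powers and the resulting tree is not very small, so you must pull the primitive Nielsen path $\eta_i$ instead.

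Second, and more seriously, your treatment of positive bonding is where the real content lies, and ``merge them into a single edge'' does not do the job: identifying two positively bonded edges does not in general preserve the outer automorphism, and your sketched downward induction has no mechanism guaranteeing termination or that corrective folds do not create new bonded pairs elsewhere. The paper's solution is to \emph{preprocess the upper-triangular representative on the free graph $\Gamma$ before introducing any edge stabilizers}. Step~1 folds each $E_i$ over its conjugating path $\bar\gamma_i$ so that every suffix becomes an honest power $[\eta^{k}]$ of a primitive Nielsen path rather than a conjugate of one; Step~2 then folds within each linear family (half of $E_i'$ over the next-lower $E_j'$) so that the new primitive Nielsen paths $\eta_i''=E_{i_1}''\cdots E_{i_l}''[f''(\eta)]\bar E_{i_l}''\cdots\bar E_{i_1}''$ begin with distinct edges and are therefore pairwise non-conjugate in the vertex groups by construction. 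Only after these two rounds of folding does the paper collapse and pull primitive roots over, and at that point no positive bonding can arise. Your post-hoc cleanup would need to reproduce essentially this preprocessing to succeed, and until you specify it the proof has a genuine gap.
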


\begin{proof}
	The strategy of the proof is to collapse every fixed edge; in the
	resulting graph of groups, the suffix of the lowest linear edge is in a
	vertex group, and so the suffix can be folded over that edge. Working up the
	filtration in this fashion the result is a graph of groups with cyclic
	edge stabilizers, and by twisting on every edge by the twister
	specified by its suffix; the result is a Dehn twist on this graph which
	represents $\hat{\sigma}$.
	
	The problem with this construction, as just described, is that the
	result may not be efficient: there may be obtrusive powers, and there
	may be positively bonded edges. The first problem is solved by using
	the primitive root of the suffix, but the second requires some work. 
	One could use Cohen and Lustig's algorithm to remove positive bonding,
	however we give a different construction similar to that of Bestvina,
	Feighn, and Handel~\cite{bfh-ii} useful when considering more than one
	Dehn twist. In this construction we first fold certain edges with
	related suffix so that when we carry out the sketch above no positive
	bonding results.

	We assume without loss of generality $\Gamma$ is minimal (that is, the
	quotient of a minimal tree under the $F_r$ action).

	\emph{Step 1: Fold Conjugates}. We construct a series of folds by
	working up the filtration from lowest
	edge to highest. Start with $\Gamma^0 = \Gamma$.
	Suppose the suffix $u_i$ of $E_i$ is of the form
	$\gamma_i[\eta_j^k]\bar{\gamma}_i$ with $k\neq 0$, where $u_j = [\eta_j^{k'}]$ so that
	$\eta_j$ is the primitive Nielsen path associated to $u_j$, $j < i$ and
	$\gamma_i$ a closed path of height at most $i-1$. Since $u_i$ is
	Nielsen and tight we must have $[\hat{\sigma}_{i-1}(\gamma_i)] =
	\gamma_i\eta_j^m$ for some $m\in \mathbb{Z}$ (possibly zero).
	In this case fold the terminal half of $E_i$ over $\bar{\gamma}_i$. Let
	$f_i:\Gamma^{i-1}\to \Gamma^i$ be the folding map in this step. We
	claim the induced homotopy equivalence satisfying $\hat{\sigma}_if_i =
	f_i\hat{\sigma}_{i-1}$ has an upper triangular tightening.  Let $E_i'$ denote
	the unfolded initial half of $E_i$, and filter $\Gamma^i$ by the
	filtration of $\Gamma^{i-1}$ where the $i$th stratum is now $E_i'$. It
	suffices to check that $\hat{\sigma}_i(E_i') = E_i'u_i'$. Indeed, using
	the equation
		\[ f_i\hat{\sigma}_{i-1}(E_i\gamma_i) = \hat{\sigma}_i(E_i')\]
	we have for some $m\in\mathbb{Z}$
		\[ f_i\hat{\sigma}_{i-1}(E_i\gamma_i) =
		f_i(E_i\gamma_i\eta_j^k\bar{\gamma}_i\gamma_i\eta_j^m) =
		E_i'\bar{\gamma}_i\gamma_i\eta_j^k\bar{\gamma}_i\gamma_i\eta_j^m \]
	and so the tightening of $\hat{\sigma}_i$ gives $E_i'$ suffix
	$[\eta_j^{k+m}]$ (for edges other than $E_i'$ the suffix is the same as
	that of $\hat{\sigma}_{i-1}$, which already has upper triangular
	tightening). If the
	suffix $u_i$ of $E_i$ is not of the above form, take $\Gamma^i =
	\Gamma^{i-1}$ and $f_i = \id$.

	Denote the total folding map $f_k\cdots f_0 = f': \Gamma\to \Gamma'$,
	and the tightening of the induced
	automorphism $\hat{\sigma}'$. By construction $\hat{\sigma}'$ is upper
	triangular and has the property that for every two edges $E_i$ and
	$E_j$ with common terminal vertex, if their suffixes have conjugate
	roots then they are of the form $u_i = [\eta^{k_i}]$, $u_j =
	[\eta^{k_j}]$ for positive powers of a primitive Nielsen path $\eta$.

	\emph{Step 2: Fold Linear Families.} Starting now with $\hat{\sigma}'$,
	we perform another sequence of folds to ensure that twisters will not be
	positively bonded. For a primitive Nielsen path $\eta$, the linear
	family associated to $\eta$ is all edges of $\Gamma'$ with suffix
	$[\eta^k]$ for some $k \neq 0$. We now work down the filtration of
	$\Gamma'$. Set $\Gamma_k' = \Gamma'$. If $E_i'$ is in the linear family
	associated to some primitive Nielsen path $\eta$, let
	$E_j$ be the next edge lower than $E_i'$ in the linear family, and fold
	half of $E_i'$ over all of $E_j'$. Denote the fold
	$f_i':\Gamma_i'\to\Gamma_{i-1}'$ in this case; otherwise set
	$\Gamma_{i-1}' =\Gamma_i$ and $f_i' = \id$. Let $\Gamma'' = \Gamma_0'$ be the total result
	of this folding, with total folding map $f_0'\cdots f_k' = f'':\Gamma'\to\Gamma''$, and denote
	the unfolded halves of edges by $E_i''$. (If an edge is not folded we
	will also use $E_i''$ for the edge as an edge of $\Gamma''$). The graph
	$\Gamma''$ is naturally filtered, with the filtration induced by $f''$.
	We claim that the induced homotopy equivalence $\hat{\sigma}'' =
	f''\hat{\sigma}'{f''}^{-1}$ is again upper triangular. Indeed, as in
	the previous case we can calculate the suffixes. For $E_i'$ denote by
	$E_{i_1}',\ldots E_{i_l}'$ the edges in the linear family of $E_i'$ below
	$E_i'$ in descending order, so that $f''(E_i') = E_i''E_{i_1}''\cdots
	E_{i_l}''$. Working inductively up the linear family, a calculation
	similar to the previous step finds
	$\hat{\sigma}''(E_i'') = E_i'' E_{i_1}''\cdots
	E_{i_l}''[f''(\eta)^{k_i-k_{i_1}}]\bar{E}_{i_l}''\cdots\bar{E}_{i_1}'$, and the
	associated primitive Nielsen path to $E_i''$ is $\eta_i'' =
	E_{i_1}''\cdots E_{i_l}''[f''(\eta)]\bar{E}_{i_l}''\cdots\bar{E}_{i_1}''$.

	\emph{Step 3: Collapse and Fold Edge Stabilizers.} From
	$\hat{\sigma}''$ and $\Gamma''$ we can now construct a graph of groups;
	the previous two steps will ensure that no twisters in the result are
	positively bonded. We work up the filtration once more. Let
	$\bar{T}^0$ be the graph of groups constructed from $\Gamma''$ by
	collapsing all edges with trivial suffix. Obtain $\bar{T}^i$ from
	$\bar{T}^{i-1}$ as follows. If $\hat{\sigma}''(E_i'') = E_i''$, set
	$\bar{T}^i = \bar{T}^{i-1}$. If $\hat{\sigma}''(E_i'') =
	E_i''[{\eta_i''}^{k_i''}]$ then obtain $\bar{T}^i$ from $\bar{T}^i$
	by pulling $\eta_i''$ over $E_i''$. By construction $\eta_i''$
	represents an element in a vertex group at some lift $t(E_i'')$. The result is
	$\bar{T}$. The composition of folding maps $f'':
	\Gamma''\to\bar{T}$ induces a Dehn twist $\tilde{\sigma}$ on
	$\bar{T}$ where the system of twisters is given by $z_{E_i''} =
	{\eta_i''}^{k_i''}$. By construction, this twist represents
	$\hat{\sigma}''$ and so $\hat{\sigma}$; moreover the edge stabilizers are not
	conjugate in the vertex groups, as a result of the first two steps;
	therefore the resulting twist is efficient except
	for the possibility of invisible vertices. Invisible vertices are an
	artifact of the graph of groups; removing them gives the desired efficient
	twist.
\end{proof}

\begin{remark}
	It is possible that $\hat{\sigma}$ is upper triangular with respect to
	several different filtrations of $\Gamma$. By fixing a filtration a
	choice is being made, but the choices made do not matter because of Corollary~\ref{cor:et-uniq}.
\end{remark}

\begin{example}
	To illustrate the procedure in Lemma \ref{lem:efficient-tt} we
	calculate the efficient representative of $\sigma\in\Out(F_4)$ given by
	\begin{align*}
		a&\mapsto adbcb^{-1}d^{-1} \\
		b&\mapsto bc \\
		c&\mapsto c \\
		d&\mapsto d.\\
	\end{align*}
	We will start with the upper triangular representative
	$\hat{\sigma}:\Gamma\to\Gamma$ on the rose on 4 petals with topological
	edges named $a, b, c, d$ filtered by reverse alphabetical order and the
	images of edges under $\hat{\sigma}$ given as above. This
	representative has a single linear family $\{a, b\}$ with associated
	primitive Nielsen path $c$.

	\emph{Step 1: Fold Conjugates.} Working up the filtration we find that
	the only edge that needs folding is $a$, we fold half of $a$ over
	$\bar{b}\bar{d}$. This gives the folding map $f': \Gamma\to \Gamma'$
	where $\Gamma'$ is a rose on four petals with edges $(a', b, c, d)$,
	$f'(a) = a' \bar{b}\bar{d}$, and $f'(e) = e$ for $e\neq a$. The induced
	upper triangular representative $\hat{\sigma}' : \Gamma'\to\Gamma'$ is
	given by
	\begin{align*}
		a'&\mapsto a'c^2 \\
		b&\mapsto bc \\
		c&\mapsto c \\
		d&\mapsto d.\\
	\end{align*}
	Indeed, we can verify that $\hat{\sigma}'(a') = a'c^2$ by calculating:
		\[ \hat{\sigma}'(a') = \hat{\sigma}'(a'\bar{b}\bar{d}db)
		= \hat{\sigma}'(f(adb)) = [f(\hat{\sigma}(adb))] =
		[a'\bar{b}\bar{d}dbc^2] = a'c^2. \]

	\emph{Step 2: Fold Linear Families.} Working down the filtration, the
	only edge that requires folding is $a'$: we fold the terminal half over
	$b$. This defines $f'': \Gamma'\to\Gamma''$ where $\Gamma''$ is the
	four petals with edges $(a'',b,c,d)$, $f''(a') = a''b$, and $f''(e) =
	e$ for $e\neq a'$. Calculating $\hat{\sigma}''(a'')$ in a similar
	fashion:
	\[ \hat{\sigma}''(a'') = [f''(\hat{\sigma}'(a'b))] =
	[a''bc^2\bar{c}\bar{b}] = a''bc\bar{b}. \]
	The action of $\hat{\sigma}''$ on the remaining edges is the same as
	that of $\hat{\sigma}'$.

	\emph{Step 3: Collapse and Fold Edge Stabilizers.} Once more working up
	the filtration we first collapse the edges $c$ and $d$ with trivial
	suffix, which gives the graph of groups $\bar{T}^0$ which has two free
	edges $a''$ and $b$, and a vertex with stabilizer $\langle c, d\rangle$.
	\begin{center}
		\begin{tikzpicture}[thick,font=\small,every text node
	part/.style={align=center}]
	\node [vertex] (V) at (0,0) {};
	\draw (-1,0) circle (1cm)  [postaction={decorate},decoration={
    markings,
	mark=at position 0.5 with {\arrow{>}}}];
	\draw (1,0) circle (1cm)  [postaction={decorate},decoration={
    markings,
	mark=at position 0.0 with {\arrow{>}}}];
	\coordinate (A) at (-2,0);
	\coordinate (B) at (2,0);
	\node [left =2pt of A] {		$a''$};
	\node [right =2pt of B] {		$b$};
	\node [below =1.2cm of V] {$v$ \\ $G_v = \langle c, d 		\rangle$};
\end{tikzpicture}
 	\end{center}
	Next, we pull $c$ over $b$, and (using the orientation
	$\{\bar{a},\bar{b}\}$ and keeping in mind that in this article we are
	using right actions) obtain $\bar{T}^1$.
	\begin{center}
		\begin{tikzpicture}[thick,font=\small,every text node
	part/.style={align=center}]
	\node [vertex] (V) at (0,0) {};
	\draw (-1,0) circle (1cm)  [postaction={decorate},decoration={
    markings,
	mark=at position 0.5 with {\arrow{>}}}];
	\draw (1,0) circle (1cm)  [postaction={decorate},decoration={
    markings,
	mark=at position 0.0 with {\arrow{>}}}];
	\coordinate (A) at (-2,0);
	\coordinate (B) at (2,0);
	\node [left =2pt of A] {		$a''$};
	\node [right =2pt of B] {$G_{b} = \langle c\rangle$ \\ 
		$b$};
	\node [below =1.2cm of V] {$v$ \\ $G_v = \langle c, d , bcb^{-1}
		\rangle$};
\end{tikzpicture}
 	\end{center}
	Finally, we pull $bc\bar{b}$ over $a''$ and using the same orientation
	for labels arrive at $\bar{T}$.
	\begin{center}
		\begin{tikzpicture}[thick,font=\small,every text node
	part/.style={align=center}]
	\node [vertex] (V) at (0,0) {};
	\draw (-1,0) circle (1cm)  [postaction={decorate},decoration={
    markings,
	mark=at position 0.5 with {\arrow{>}}}];
	\draw (1,0) circle (1cm)  [postaction={decorate},decoration={
    markings,
	mark=at position 0.0 with {\arrow{>}}}];
	\coordinate (A) at (-2,0);
	\coordinate (B) at (2,0);
	\node [left =2pt of A] {$G_{a''} = \langle bcb^{-1}\rangle$ \\ 
		$a''$};
	\node [right =2pt of B] {$G_{b} = \langle c\rangle$ \\ 
		$b$};
	\node [below =1.2cm of V] {$v$ \\ $G_v = \langle c, d , bcb^{-1},
	a''bcb^{-1}{a''}^{-1} \rangle$};
\end{tikzpicture}
 	\end{center}
	The Dehn twist representative $\tilde{\sigma}$ is given by the system
	of twisters $z_{a''} = bcb^{-1}$ $z_b = c$. Observe $a'' = ad$.
\end{example}

The upper triangular representative constructed in the previous lemma provides
us with a basis of $F_r$ with small bounded cancellation constant for the
length function on the Bass-Serre tree $T$.

\begin{lemma}\label{lem:good-basis}
	Suppose $\sigma$ is an efficient Dehn twist on the very small graph of
	groups $\bar{T}$, and let $T$ be the Bass-Serre tree. Then there is a
	basis $\Lambda$ for $F_r$ such that the bounded cancellation constant
	constant $C(\Lambda, T)$ from Lemma \ref{lem:bclf} satisfies
	\[ C(\Lambda, T) \leq 6r(2r-2). \]
\end{lemma}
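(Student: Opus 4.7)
The strategy is to produce a free basis $\Lambda$ of $F_r$ and an equivariant Lipschitz surjection $f\colon S_\Lambda\to T$ with $\mathrm{Lip}(f)\leq 2r-2$, so that Lemma~\ref{lem:bclf} gives $C(\Lambda,T) = 6r\inf\mathrm{Lip}(f) \leq 6r(2r-2)$ directly. I would construct $\Lambda$ adapted to the graph-of-groups structure $\bar T$ of the efficient representative. Fix a maximal subtree $\bar S\subseteq \bar T$, a basepoint $v\in V(\bar T)$, and a free basis $B_u$ of each vertex group $G_u$; let $p_u$ denote the unique $\bar S$-path from $v$ to $u$. Using the transverse Bass--Serre normal form from Section~\ref{sec:bs-theory}, the $r+V-1$ vertex-conjugate elements $\{p_u\iota_u(b)p_u^{-1} : b \in B_u\}$ and the $E_g - V + 1$ loop elements $\{p_{o(e)}ep_{t(e)}^{-1} : e\in E(\bar T)\setminus E(\bar S)\}$ together generate $F_r\cong \pi_1(\bar T,v)$, subject to exactly $E_g$ edge-group relations; Tietze reduction produces a free basis $\Lambda$.

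Define $f$ by sending the wedge vertex of $S_\Lambda$ to a lift $\star\in T$ of $v$ and extending equivariantly so that each petal labeled $\lambda$ maps linearly onto the $T$-geodesic from $\star$ to $\star\cdot\lambda$. The image of $f$ is a closed $F_r$-invariant connected subtree of $T$, hence all of $T$ by minimality. The Lipschitz constant equals $\max_{\lambda\in\Lambda} d_T(\star,\star\cdot\lambda)$: a vertex-conjugate basis element $p_u\iota_u(b)p_u^{-1}$ traces $p_u$, fixes the lift of $u$, and returns, contributing length $2|p_u|$; a loop element contributes length $|p_{o(e)}|+1+|p_{t(e)}|$. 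Both are bounded above by $2\,\diam(\bar S) + 1$.

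The technical heart, and main obstacle, is showing $2\,\diam(\bar S) + 1 \leq 2r-2$. I would proceed from the Euler-characteristic rank identity $\sum_u \mathrm{rank}(G_u) = r + V - 1$ together with the constraints imposed by efficiency: since every edge group is nontrivial cyclic, every vertex group is nontrivial, and the no-positively-bonded-edges condition forces each cyclic vertex group to have valence at most $2$, as any two edges at such a vertex whose twister powers are both positive powers of the vertex-group generator would be positively bonded. Combined with minimality and visibility, these constraints bound $V$ and $\diam(\bar S)$ sharply. A case analysis separating the situation $\bar T = \bar S$ (where no loop generators appear and the relevant bound reduces to $2(V-1)$) from $\bar T\neq \bar S$ (where the extra $+1$ must be absorbed) should complete the estimate.
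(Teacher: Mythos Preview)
Your overall shape---produce a basis $\Lambda$ together with an equivariant Lipschitz surjection $S_\Lambda\to T$ and invoke Lemma~\ref{lem:bclf}---matches the paper's strategy, but the way you manufacture $\Lambda$ is different and contains a genuine gap.

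You build a generating set for $\pi_1(\bar T,v)$ out of the $(r+V-1)$ vertex-conjugate elements $p_u b p_u^{-1}$ and the $(E-V+1)$ loop elements, correctly note that there are $E$ edge-group relations, and then say ``Tietze reduction produces a free basis $\Lambda$.'' The problem is that after Tietze reduction you no longer know what the elements of $\Lambda$ look like. Each relation $p_{o(e)}\iota_{\bar e}(z_e)p_{o(e)}^{-1}=p_{t(e)}\iota_e(z_e)p_{t(e)}^{-1}$ equates two \emph{powers} of vertex-basis elements (edge-group generators need not be primitive in either endpoint vertex group), so eliminating a generator typically replaces others by words of uncontrolled length. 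Your displacement bound $d_T(\star,\star\cdot\lambda)\leq 2\diam(\bar S)+1$ was computed for the \emph{unreduced} generators; it does not survive the reduction, and so the Lipschitz estimate for $f$ is lost exactly where you need it.

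The paper sidesteps this entirely. Rather than working with the graph of groups $\bar T$ (whose nontrivial vertex groups create the Tietze problem), it invokes Lemma~\ref{lem:efficient-tt} to produce a \emph{free} simplicial $F_r$-tree $\Gamma''$ together with a composition of folds and collapses $f:\Gamma''\to T$, which is automatically $1$-Lipschitz. The quotient $\Gamma''/F_r$ is then an ordinary rank-$r$ graph with trivial vertex groups, so one can collapse a maximal tree $K$ to obtain a rose $R$ and a homotopy inverse $g:R\to\Gamma''/F_r$; the bound on $\mathrm{Lip}(g)$ comes from the standard fact that a rank-$r$ graph with no valence-one or -two vertices has at most $3r-3$ edges, hence $\diam(K)\leq 2r-2$. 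The basis $\Lambda$ is the one read off the petals of $R$, and no Tietze step is needed. Your case analysis of $\bar T$ via efficiency (no positively bonded edges, visibility, minimality) is therefore aimed at the wrong object: the relevant combinatorial bound lives on the free graph $\Gamma''/F_r$, not on $\bar T$ itself.
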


\begin{proof}
	From Lemma \ref{lem:bclf}, we know that $C(\Lambda, T) \leq
	6r Lip(f)$ for any Lipshitz surjection $f : S_\Lambda\to T$, where
	$S_\Lambda$ is the universal cover of a wedge of circles marked by
	$\Lambda$. Therefore it suffices to produce an $F_r$-tree $S$ with
	quotient a wedge of $r$ circles and a map $f:S\to T$ so that
	$ Lip(f)\leq 2r-2$. The basis corresponding to the circles in the
	quotient of $S$ is then the desired basis.

	By Lemma~\ref{lem:efficient-tt} there is a simplicial tree $\Gamma''$
	and a map $f:\Gamma''\to T$ that is a composition of folds and
	collapses.  Thus the map $f:\Gamma''\to T$ has Lipshitz constant 1. The
	tree $\Gamma''$ is equivalent to one with no valence one or two
	vertices so $\Gamma''/F_r$ has at most $3r-3$ edges. By fixing a
	maximal tree $K\subseteq \Gamma''/ F_r$, the collapse of this maximal
	tree gives a wedge of circles $R$ with $r$ edges, and a homotopy
	equivalence $g:R\to \Gamma''/F_r$ with Lipshitz constant at most
	$\diam(K)\leq 2r-2$. The composition of the lift $\tilde{g}$ with $f$
	gives $f\circ\tilde{g}:\tilde{R}\to T$, which is the desired map.
\end{proof}
\section{Guiding examples}
\label{sec:examples}

When the Guirardel core of two Bass-Serre trees has no rectangles, its quotient
provides a simultaneous resolution of the two graphs of groups. This
construction immediately gives us a sufficient condition for two Dehn twists to
commute.

\begin{lemma}\label{lem:dt-com}
	Suppose $\tilde{\sigma}$ and $\tilde{\tau}$ are efficient Dehn twists based
	on graphs of groups $\bar{A}$ and $\bar{B}$ covered by $F_r$-trees $A$
	and $B$ respectively, representing $\sigma,\tau\in\Out(F_r)$. If
	$i(A,B) = 0$ then $[\sigma,\tau] = 1$ in $\Out(F_r)$.
\end{lemma}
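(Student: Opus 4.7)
The plan is to use the hypothesis $i(A,B)=0$ to get a common refinement on which both Dehn twists can be simultaneously realized, and then invoke the observation (made right after the definition of a Dehn twist) that any two Dehn twists on the same graph of groups commute. More precisely, since $\tilde\sigma$ and $\tilde\tau$ are efficient, $\bar A$ and $\bar B$ are minimal, visible, and very small, and $A$, $B$ are irreducible. Theorem~\ref{thm:core-refine} gives that $\hatcore(A,B)$ is a common refinement of $A$ and $B$, and Lemma~\ref{lem:core-bs} identifies $\hatcore(A,B)$ as the Bass-Serre tree of a graph of groups $\Gamma$ together with edge-collapse maps $\pi_{\bar A'}\colon \Gamma\to \bar A'$ and $\pi_{\bar B'}\colon \Gamma\to \bar B'$, where $\bar A'$ and $\bar B'$ are equivalent to $\bar A$ and $\bar B$.

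Next I would promote each of $\tilde\sigma,\tilde\tau$ to a Dehn twist on $\Gamma$. By Lemma~\ref{lem:core-bs}, each edge $a$ of $\bar A'$ is the image under $\pi_{\bar A'}$ of a unique edge $e_a\in E(\Gamma)$ carrying the same edge group. Define a system of twisters $z^\sigma$ on $\Gamma$ by $z^\sigma_{e_a}:=z^{\tilde\sigma}_a$ for $a\in E(\bar A')$ and $z^\sigma_e:=\id$ on the remaining edges of $\Gamma$; since edge groups match, each $z^\sigma_{e_a}$ lies in the center of the corresponding edge group of $\Gamma$, so the data determines a Dehn twist $\tilde\sigma_\Gamma$ on $\Gamma$. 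Construct $\tilde\tau_\Gamma$ analogously using $\pi_{\bar B'}$ and the twisters of $\tilde\tau$. Because $\tilde\sigma_\Gamma$ and $\tilde\tau_\Gamma$ are Dehn twists on the same graph of groups $\Gamma$, they commute as automorphisms of the fundamental groupoid $\pi_1(\Gamma)$; passing to induced outer automorphisms of $F_r=\pi_1(\Gamma,v)$ yields $[\sigma,\tau]=1$.

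The one point that requires verification, and which I expect to be the main technical obstacle, is the claim that $\tilde\sigma_\Gamma$ actually represents $\sigma\in\Out(F_r)$ (and similarly for $\tau$). This is a bookkeeping argument using the groupoid presentation: applying $\pi_{\bar A'}$ to a Bass-Serre normal form in $\pi_1(\Gamma)$ crushes every edge carrying a trivial $z^\sigma$-twister to a vertex and leaves the edges $e_a$ intact with the same twister $z^{\tilde\sigma}_a$, so the groupoid action of $\tilde\sigma_\Gamma$ pushes down to exactly the groupoid action of $\tilde\sigma$ on $\pi_1(\bar A',v)$. Since $\pi_{\bar A'}$ is a refinement map it induces the identity on $F_r=\pi_1(\bar A',v)=\pi_1(\Gamma,v)$, and so the induced outer automorphism is the same, completing the proof.
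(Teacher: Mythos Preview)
Your proposal is correct and follows essentially the same approach as the paper: use Theorem~\ref{thm:core-refine} and Lemma~\ref{lem:core-bs} to obtain the common refinement $\Gamma$, lift each twist to $\Gamma$ by pulling back twisters along the collapse maps (setting them trivial on collapsed edges), verify via the groupoid that the lifts represent the same outer automorphisms, and conclude using the fact that two Dehn twists on a common graph of groups commute. The paper handles the verification step with the one-line observation $\pi_{\bar A}\hat\sigma=\tilde\sigma\pi_{\bar A}$ on the fundamental groupoid (citing Cohen--Lustig for the fact that a collapse induces the identity on the fundamental group), which is exactly the bookkeeping you outline.
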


\begin{proof}
	Since $A$ and $B$ are simplicial,
	$i(A,B) = 0$ implies that
	$\hatcore(A,B)$ is a tree. Therefore, by Lemma
	\ref{lem:core-bs},
	$\hatcore(A,B)$ is the Bass-Serre tree of a
	graph of groups $\Gamma$, and we may without loss of generality assume
	$\bar{A}$ and $\bar{B}$ fit into the
	following diagram, where $\pi_{\bar{A}}$ and $\pi_{\bar{B}}$ are quotient
	graph of groups morphisms that collapse edges.
	\[\xymatrix{
		& \Gamma\ar[ld]_{\pi_{\bar{A}}}\ar[rd]^{\pi_{\bar{B}}} & \\
			\bar{A} & & \bar{B}
	}\]
	Moreover (and this is still the content of Lemma \ref{lem:core-bs}),
	the edge groups of $\Gamma$ are edge groups of either 
	$\bar{A}$ or $\bar{B}$. 
	
	Define $\hat{\sigma}$ on $\Gamma$ by the system of twisters
	\[ z_e = \left\{ \begin{matrix} z_{\pi_{\bar{A}}(e)} &
		\pi_{\bar{A}}(e) \in E(\bar{A}) \\
		1 & \mbox{otherwise.}
	\end{matrix}\right.\]
	By construction, $\pi_{\bar{A}}\hat{\sigma} =
	\tilde{\sigma}\pi_{\bar{A}}$ at the level of the fundamental groupoid, so that
	$\hat{\sigma}$ is also a representative of $\sigma$. (The induced
	automorphism on the fundamental group coming from a graph of groups
	collapse is the identity~\cite{cohen-lustig}.) Similarly define
	$\hat{\tau}$, thus simultaneously realizing $\sigma$ and $\tau$ as Dehn
	twists on $\Gamma$, whence $[\sigma,\tau] = 1$.
\end{proof}

Towards a converse, Clay and Pettet give a partial result, using the notion of
a filling pair of Dehn twists~\cite{clay-pettet}. The other key tool is the
ping-pong lemma, which we use in the following formulation similar to the form
used by Clay and Pettet and Hamidi-Tehrani~\cites{clay-pettet,hamidi-tehrani}.

\begin{lemma}[Ping-Pong]
	Suppose $G = \langle a,b\rangle$ acts on a set $P$, and there is a
	partition $P = P_a \sqcup P_b$ into disjoint subsets such that $a^{\pm n}(P_b) \subseteq P_a$
	and $b^{\pm n}(P_a) \subseteq P_b$ for all $n > 0$. Then $G\cong F_2$.
\end{lemma}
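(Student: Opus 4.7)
The plan is to show that every non-trivial reduced word in the generators $a$ and $b$ acts non-trivially on $P$, and hence represents a non-trivial element of $G$; by the universal property of the free group this forces $G \cong F_2$.

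Write a non-trivial reduced word as $w = x_1^{n_1} x_2^{n_2} \cdots x_k^{n_k}$ with $x_i \in \{a,b\}$ alternating and each $n_i \neq 0$. First I would handle the ``easy'' case where $x_1 = x_k$, say $x_1 = x_k = a$. Pick any $p \in P_b$. Reading the word from right to left and repeatedly applying the hypothesis, $a^{n_k}(p) \in P_a$ since $a^{\pm n}(P_b) \subseteq P_a$; then $b^{n_{k-1}}$ applied to this lands in $P_b$; and a straightforward induction on the position shows that the final image $w(p)$ lies in $P_a$. Because $P_a \cap P_b = \emptyset$, we have $w(p)\neq p$, so $w$ is non-trivial in $G$. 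If instead $x_1 = x_k = b$, swap the roles of $a$ and $b$ (the hypotheses are fully symmetric) and apply the same argument with a starting point in $P_a$.

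Next I would reduce the mixed cases $x_1 \neq x_k$ to the previous case by conjugation. If $x_1 = a$ and $x_k = b$, then $a w a^{-1}$ is conjugate to $w$ in $G$, is non-trivial in $G$ iff $w$ is, and its reduced form starts with $a^{n_1 + 1}$ (or, if $n_1 = -1$, with $x_2^{n_2}$, in which case one conjugates by a higher power of $a$ to avoid the cancellation) and ends with $a^{-1}$. In either case the resulting reduced word starts and ends with powers of $a$ and so falls under the first case. The symmetric case $x_1 = b, x_k = a$ is handled identically.

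The only real thing to check is that the conjugated word remains a non-trivial reduced word with the required endpoint configuration, which is immediate from the alternation of $a$ and $b$ in $w$ together with choosing the conjugating power large enough to prevent total cancellation of the first or last syllable. Once this bookkeeping is in place, the ping-pong argument above shows that every non-trivial reduced word in $a,b$ is non-trivial in $G$, which is precisely the statement that $G$ is free on $a$ and $b$.
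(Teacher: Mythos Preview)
Your proof is correct and follows essentially the same approach as the paper: reduce to the case of a word beginning and ending with a power of the same generator (via conjugation for the mixed cases), then track a point under the alternating action to see it lands in the wrong half of the partition. The paper's version is slightly terser, observing in one line that every non-trivial reduced word is either a power of $a$ or conjugate to one of the form $a^{n_1}w'a^{n_2}$ with $w'$ beginning and ending in $b$, but the content is the same.
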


\begin{proof}
	Any non-trivial reduced word is either a power of $a$ or
	conjugate to one of the form $w=a^{n_1}w'a^{n_2}$ for non-zero integers
	$n_1, n_2$ and $w'$ reduced starting and ending with a power of $b$.
	For $w$ in this form, $w(P_b)\cap P_b \subseteq P_a\cap P_b = \emptyset$, so
	$w\neq \id$.
\end{proof}

\begin{definition} 
	Let $X$ be a finitely generated group acting on $T$ a simplicial tree.
	The \emph{free $T$ volume of $X$}, $\vol_T(X)$ is the number
	of edges with trivial stabilizer in the graph of groups quotient of
	the minimal subtree $T^X\subset T$.
\end{definition}

Note that $\vol_T(\langle g\rangle) = \ell_T(g)$ for $g\in X$.

\begin{definition}
	Two graphs of groups $\bar{A}$ and $\bar{B}$ associated to $F_r$-trees
	$A$ and $B$ \emph{fill} if for every proper free factor or infinite
	cyclic subgroup $X\leq F_r$,
		\[ \vol_A(X) +\vol_B(X) > 0. \]
\end{definition}

\begin{definition}
	Suppose $\tilde{\sigma},\tilde{\tau}$ are representatives of Dehn
	twists based on $\bar{A}$ and $\bar{B}$, where both graphs of
	groups have one edge and fundamental group $F_r$. If $\bar{A}$ and
	$\bar{B}$ fill then we call the induced outer automorphisms
	$\sigma$ and $\tau$ a \emph{filling pair}.
\end{definition}

This definition is a close parallel to the notion of a pair of filling simple
closed curves, and Clay and Pettet strengthen this parallel to a theorem.

\begin{theorem}[\cite{clay-pettet}*{Theorem 5.3}]
	Suppose $\sigma, \tau\in \Out(F_r)$ are a filling pair of Dehn twists.
	Then there is an $N$ such that
	\begin{enumerate}
		\item $\langle \sigma^N,\tau^N\rangle\cong F_2$			\footnote{This conclusion holds under the weaker
			assumption that the two twists are
			\emph{hyperbolic-hyperbolic}
			(Definition~\ref{def:hyp-hyp}).}
		\item If $\phi\in\langle \sigma^N,\tau^N\rangle$ is not
			conjugate to a generator then $\phi$ is an atoroidal
			fully irreducible outer automorphism.
	\end{enumerate}
\end{theorem}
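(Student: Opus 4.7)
The approach is classical ping-pong on the compactified outer space $\overline{CV}_r$, driven by Cohen--Lustig's north--south dynamics for efficient Dehn twists (Theorem~\ref{thm:parabolic-orbits}). The plan has three stages.

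\emph{Stage 1: Disjoint attracting sets.} I would first show that the filling hypothesis produces disjoint open neighborhoods $U_A \supset \Delta(\bar A)$ and $U_B \supset \Delta(\bar B)$ in $\overline{CV}_r$. If the closures $\overline{\Delta(\bar A)}$ and $\overline{\Delta(\bar B)}$ were not disjoint, a shared limit tree would have conjugates of both $\bar A$- and $\bar B$-vertex groups acting elliptically. A careful accounting of these elliptic subgroups (including edge stabilizers, which are infinite cyclic since $\bar A$ and $\bar B$ are small) should produce a proper free factor or infinite cyclic subgroup $X \leq F_r$ with $\vol_A(X) = \vol_B(X) = 0$, contradicting the filling hypothesis. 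This is the central technical ingredient.

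\emph{Stage 2: Ping-pong.} By Theorem~\ref{thm:parabolic-orbits}, $\sigma^{\pm n}[T] \to \Delta(\bar A)$ for every $[T] \in CV_r$, and the convergence is uniform on compact subsets of $\overline{CV}_r \setminus \overline{\Delta(\bar A)}$. Since $\overline{CV}_r \setminus U_A$ is compact and disjoint from $\overline{\Delta(\bar A)}$, some $N_1$ satisfies $\sigma^{\pm n}(\overline{CV}_r \setminus U_A) \subset U_A$ for all $n \geq N_1$; in particular $\sigma^{\pm n}(U_B) \subset U_A$. Symmetrically choose $N_2$, and set $N = \max\{N_1, N_2\}$. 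The ping-pong lemma with $P_\sigma = U_A$ and $P_\tau = U_B$ yields $\langle \sigma^N, \tau^N\rangle \cong F_2$.

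\emph{Stage 3: Atoroidal and fully irreducible.} Any $\phi \in \langle \sigma^N, \tau^N\rangle$ not conjugate to a power of a generator has, after cyclic reduction, a representative beginning and ending with powers of different generators; the ping-pong dynamics then equip $\phi$ with a unique attracting fixed point $T^+_\phi \in \overline{CV}_r$ sitting strictly inside $U_A$ or $U_B$ but outside $\overline{\Delta(\bar A)} \cup \overline{\Delta(\bar B)}$. Any non-trivial element of $F_r$ that were elliptic in $T^+_\phi$, and any $\phi$-invariant proper free factor system, would produce a bounded $\phi$-orbit in $\overline{CV}_r$ contradicting north--south dynamics; these preclusions yield atoroidality and full irreducibility of $\phi$.

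The principal obstacle is Stage 1. Translating the purely combinatorial filling condition into geometric disjointness in $\overline{CV}_r$ requires a careful handling of how elliptic subgroups behave in the boundary of a simplex; in practice one likely wants to replace the topological argument by a length-function estimate using Lemma~\ref{lem:bclf}, tracking how $\ell_A(X)$ and $\ell_B(X)$ interact for candidate proper free factors. Once Stage 1 is in hand, Cohen--Lustig's dynamics and the ping-pong lemma do the rest.
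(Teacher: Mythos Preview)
The paper does not prove this theorem; it is quoted from Clay and Pettet~\cite{clay-pettet} as background. The paper does, however, prove the closely related Lemma~\ref{lem:dt-free} (conclusion~(1) under the weaker hyperbolic--hyperbolic hypothesis), and immediately afterward addresses precisely the strategy you propose. So let me compare your outline to that.

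Your Stage~2 contains a genuine gap. Theorem~\ref{thm:parabolic-orbits} (Cohen--Lustig's parabolic orbits theorem) is stated only for $[T]\in CV_r$, not for $[T]\in\overline{CV}_r$, and even on $CV_r$ it asserts \emph{pointwise} convergence of projective length functions, with no rate. You assert that ``the convergence is uniform on compact subsets of $\overline{CV}_r\setminus\overline{\Delta(\bar A)}$,'' but this is exactly what the theorem does not supply. The paper flags this explicitly in the remark following Lemma~\ref{lem:dt-free}: the parabolic orbits theorem ``gives pointwise convergence of length functions on conjugacy classes but does not control the rate of convergence,'' and the examples of Bestvina, Feighn, and Handel~\cite{bfh-ii}*{Remark 4.24} show the rate can be arbitrarily bad. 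Without uniformity on the compact set $\overline{CV}_r\setminus U_A$, you cannot produce a single $N$ that pushes all of $U_B$ into $U_A$.

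The approach that actually works---both in Clay--Pettet and in the paper's Lemma~\ref{lem:dt-free}---abandons topological ping-pong on $\overline{CV}_r$ in favor of ping-pong on conjugacy classes, partitioned by the sign of $\ell_A(w)-\ell_B(w)$. The uniform control there comes not from dynamics but from bounded cancellation (Lemma~\ref{lem:bclf}) applied to the Bass--Serre normal form, which is what allows an explicit $N$. Your instinct at the end of Stage~1, that one ``likely wants to replace the topological argument by a length-function estimate using Lemma~\ref{lem:bclf},'' is correct---but that replacement is the entire proof of part~(1), not a technical patch on the topological argument.
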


In developing their definition of free volume, Clay and Pettet use the
Guirardel core as motivation, but give a form suited explicitly to the proof of
their theorem. The definition of filling is indeed noticed by the core.

\begin{proposition}\label{prop:fill-core}
	Suppose $\bar{A}$ and $\bar{B}$ are small graphs of groups with one
	edge and fundamental group $F_r$ that fill. Then the Bass-Serre trees
	$A$ and $B$ have $i(A,B) > 0$ and the action of $F_r$ on $\core(A,B)$ is free.
\end{proposition}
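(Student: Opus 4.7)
The plan is to establish both conclusions by the contrapositive: in each case I produce a subgroup $X$ that is a proper free factor or infinite cyclic subgroup satisfying $\vol_A(X)=\vol_B(X)=0$, contradicting filling.

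I begin with the freeness of the action on $\core(A,B)$. Suppose for contradiction that $g\in F_r$ is nontrivial and fixes a point $(p,q)\in\core(A,B)\subseteq A\times B$. Then $g$ fixes $p$ in $A$ and $q$ in $B$, so the minimal $\langle g\rangle$-invariant subtree inside each of $A$ and $B$ is pointwise fixed by $g$. Every edge of such a subtree has stabilizer containing $g$, so no edge contributes to the count of trivial-stabilizer edges, forcing $\vol_A(\langle g\rangle)=\vol_B(\langle g\rangle)=0$. Since $\langle g\rangle$ is a nontrivial infinite cyclic subgroup, this contradicts filling.

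For $i(A,B)>0$, assume $i(A,B)=0$. By Theorem~\ref{thm:core-refine} and Lemma~\ref{lem:core-bs}, $\hatcore(A,B)$ is the Bass-Serre tree of a common refinement $\bar\Gamma$, and $\bar A$, $\bar B$ arise from $\bar\Gamma$ via equivariant edge-collapse maps $\pi_{\bar A}$, $\pi_{\bar B}$ (up to equivalence). Since each of $\bar A$, $\bar B$ has a single edge, exactly one edge orbit of $\bar\Gamma$ survives each collapse; call these orbits $e_A^*$ and $e_B^*$. If $e_A^*\neq e_B^*$, then $e_A^*$ is collapsed by $\pi_{\bar B}$, so its stabilizer $G_{e_A^*}=G_{e_A}$ is contained in a vertex group of $\bar B$ and fixes a vertex of $B$; combined with $G_{e_A}$ fixing the edge in $A$, this yields $\vol_A(G_{e_A})=\vol_B(G_{e_A})=0$, and when $G_{e_A}$ is nontrivial cyclic (as is automatic in the Dehn twist setting) it supplies the desired proper infinite cyclic subgroup. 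If instead $e_A^*=e_B^*$, then the same complementary subgraph is collapsed in both cases, so $\bar A=\bar B$ and $A=B$; here the edge group, if nontrivial, supplies an infinite cyclic subgroup with vanishing volume in both trees, and if trivial, a vertex group is a proper free factor of the resulting free splitting with the same property, again contradicting filling.

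The main obstacle is guaranteeing that the subgroup produced is genuinely a proper free factor or infinite cyclic subgroup rather than an arbitrary subgroup of $F_r$. In the Dehn twist setting that underlies this section, the nontriviality of the edge groups makes $G_{e_A}$ and $G_{e_B}$ immediately yield infinite cyclic subgroups; in the trivial edge case one must instead exploit that a one-edge graph of groups with trivial edge stabilizer is a free splitting, whose vertex groups are proper free factors and so remain viable candidates for $X$.
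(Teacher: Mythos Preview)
Your freeness argument is the paper's, recast contrapositively: both note that a nontrivial $g$ fixing a point of $\core(A,B)$ is elliptic in $A$ and in $B$, so $\vol_A(\langle g\rangle)+\vol_B(\langle g\rangle)=0$, contradicting filling.

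For $i(A,B)>0$ you take a genuinely different route. The paper works directly: it takes a generator $c$ of $\bar A_e$, uses filling to get $\ell_B(c)>0$, builds a hyperbolic element $ab$ whose $A$-axis contains the arc fixed by $c$, and then observes that $C_{ab}^B$ and $C_{c^{-n}abc^n}^B$ become disjoint for large $n$ (smallness of $B$) while $C_{ab}^A\cap C_{c^{-n}abc^n}^A$ always contains that arc, so $A$ and $B$ have incompatible combinatorics in the sense of Lemma~\ref{lem:compat-tree}. Your argument is structural: assuming $i(A,B)=0$, Lemma~\ref{lem:core-bs} gives a common refinement $\Gamma$ with at most two edge orbits, and the edge group of the one collapsed when passing to the other tree is elliptic in both, violating filling. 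Your version is shorter and avoids the axis geometry; the paper's version is constructive and anticipates the hyperbolic--hyperbolic machinery used immediately afterward.

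One caution about your closing remark. Your suggested fallback for trivial edge groups---using a vertex group of the free splitting $\bar A$---does not work: such a vertex group need not be elliptic in $B$. Passing to a vertex group of the refinement $\Gamma$ would give ellipticity in both trees, but that vertex group can be trivial. Indeed the proposition is false for trivial edge groups: for $F_2=\langle a,b\rangle$, take $\bar A$ the HNN extension of $\langle a\rangle$ with stable letter $b$ and trivial edge group, and $\bar B$ the symmetric splitting; this pair fills, yet the Cayley tree of $F_2$ is a common refinement, so $i(A,B)=0$. The paper's argument also tacitly assumes $c\neq 1$ (it needs $\ell_B(c)>0$ from filling), so in the intended Dehn-twist context both proofs are complete---the trivial-edge case is simply outside the scope of the statement.
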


\begin{proof}
	First, for any $(p,q)\in \core$, and $x\neq\id \in F_r$,  we have
	$\ell_A(x) = \vol_A(<x>)$ and $\ell_B(x) = \vol_B(<x>)$. Since
	$\bar{A}$ and $\bar{B}$ fill,
		\[ \ell_A(x)+\ell_B(x) = \vol_A(<x>) + \vol_B(<x>) > 0\]
	and therefore $(p,q)\cdot x \neq (p,q)$.

	To see that the core contains a rectangle we will show that the two
	trees have incompatible combinatorics~(Lemma \ref{lem:compat-tree}). To fix notation let $e$ be the
	edge of $\bar{A}$ and $f$ be the edge of $\bar{B}$. Let $\bar{A}_e =
	\langle c\rangle$. If $o(e)\neq t(e)$, let $a\in \bar{A}_{o(e)}$ be
	an element with no power conjugate into $\iota_{\bar{e}}(\bar{A}_e)$,
	and $\beta\in
	\bar{A}_{t(e)}$
	be an element not conjugate into $\iota_e(\bar{A}_e)$. Set 
	$b=e\beta e^{-1}$ in $\pi_1(\bar{A},o(e))$. If $o(e) = t(e)$ take $a$
	as before and $b = e$ in $\pi_1(\bar{A},o(e))$. 
	
	By construction,
	$\ell_A(ab) > 0$, and so
	$ab$ is not conjugate to $\iota_{\bar{e}}(c)$. Again, by the
	filling property, since $\ell_A(c) = 0$,
	$\ell_B(c) > 0$. Since $ab$ and $c$ are not
	conjugate, the characteristic sets of $ab$ and $c$ in $B$
	meet in at most a finite number of edges of $C_{c}^B$,
	since $B$ is small. Thus there is some $n > 0$ such that
	$C_{ab}^B\cap C_{c^{-n}ab c^n}^B = \emptyset$. However, by construction
	$C_{ab}^A$ contains the arc in
	$A$ stabilized by $c$, so
	$C_{ab}^A\cap C_{c^{-n}ab c^n}^A$
	contains this arc for all $n$. Therefore the two Bass-Serre trees are
	incompatible, the core contains a rectangle, and since both trees are
	simplicial this implies that the intersection number is positive, as
	required.
\end{proof}

This proposition motivates a variation of Clay and Pettet's result, in pursuit of a
converse to Lemma~\ref{lem:dt-com}. This variation cannot make the stronger
assertion that the generated group contains an atoroidal fully irreducible
element. Indeed, take $\sigma$ and $\tau$ to
be a filling pair of Dehn twists for $F_k$ and consider the automorphism
$\sigma\ast \id_m$ and $\tau\ast\id_m$ acting on $F_k\ast F_m$. This is a pair
of Dehn twists of $F_{k+m}$ that has powers generating a free group, but does
not fill, and every automorphism in $\langle \sigma\ast\id_m,\tau\ast\id_m\rangle$
fixes the conjugacy class of the complementary $F_m$ free factor, so all elements of the
generated group represent reducible outer automorphisms. Nevertheless, there is a partial converse to
Lemma~\ref{lem:dt-com}, finding free groups generated by 
pairs of Dehn twists based on one-edge graphs of groups using a variation on
their argument.

\begin{definition}\label{def:hyp-hyp}
	Suppose $\bar{A}$ and $\bar{B}$ are minimal visible small graphs of
	groups with one edge and associated $F_r$-trees $A$ and $B$. The pair
	is \emph{hyperbolic-hyperbolic} if both for the edge $e\in E(\bar{A})$,
	a generator $z_e$ of $\bar{A}_e$ acts hyperbolically on $B$; and for
	the edge $f\in E(\bar{B})$, a generator $z_f$ of $\bar{B}_f$ acts
	hyperbolically on $A$.
\end{definition}

\begin{proposition}
	Suppose $\bar{A}$ and $\bar{B}$ are minimal visible small graphs
	of groups with one edge. If $\bar{A}$ and $\bar{B}$ are hyperbolic-hyperbolic, then
	$i(A,B) > 0$.
\end{proposition}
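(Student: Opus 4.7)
The plan is to argue by contradiction, using Lemma~\ref{lem:core-bs} to show that compatibility of $A$ and $B$ is inconsistent with the hyperbolic-hyperbolic hypothesis. Suppose $i(A,B)=0$. By Theorem~\ref{thm:core-refine}, $\hatcore(A,B)$ is then a common refinement of $A$ and $B$, and by Lemma~\ref{lem:core-bs} it is the Bass-Serre tree of a graph of groups $\Gamma$ fitting into a commuting diagram
\[\xymatrix{
	& \Gamma\ar[ld]_{\pi_{\bar{A}'}}\ar[rd]^{\pi_{\bar{B}'}} & \\
	\bar{A}' & & \bar{B}'
}\]
of edge-collapse maps, where $\bar A'$ and $\bar B'$ are equivalent as graphs of groups to $\bar A$ and $\bar B$ respectively. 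The crucial extra information is that every edge stabilizer of $\Gamma$ is conjugate in $F_r$ to either $\langle z_e\rangle$ or $\langle z_f\rangle$.

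The next step would be to exploit the single-edge hypothesis on both sides. Since $\bar B'$ has a single edge, the collapse $\pi_{\bar B'}$ leaves exactly one orbit of edges of $\Gamma$ uncollapsed, and these uncollapsed edges carry stabilizer conjugate to $\langle z_f\rangle$. Fix any such edge $e'\in E(\Gamma)$, with $\Stab(e')$ a conjugate of $\langle z_f\rangle$. I would then trace $e'$ through the other collapse $\pi_{\bar A'}:\Gamma\to\bar A'$ and split into two cases. If $e'$ survives $\pi_{\bar A'}$, then $\Stab(e')$ must equal the unique edge stabilizer $\langle z_e\rangle$ of $\bar A'$ up to conjugacy, forcing $z_f$ to be conjugate to a power of $z_e$ and thereby to fix an edge of $A$. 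If instead $e'$ is collapsed by $\pi_{\bar A'}$, its stabilizer lies inside a vertex group of $\bar A'$, so $z_f$ fixes a vertex of $A$. In either case $z_f$ is $A$-elliptic, contradicting the hypothesis that $z_f$ acts hyperbolically on $A$.

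Essentially all of the geometry has been absorbed by Lemma~\ref{lem:core-bs}, so the only real work is the edge-stabilizer bookkeeping across the two collapses simultaneously. The main obstacle is just to verify that the one-edge hypothesis on $\bar B$ genuinely produces a ``$z_f$-labelled'' edge of $\Gamma$ and that the one-edge hypothesis on $\bar A$ leaves no alternative under $\pi_{\bar A'}$ besides the two listed above. One should also verify that the hypotheses of Lemma~\ref{lem:core-bs} are met (the interesting case is $r \geq 2$, where $F_r$ is neither $\Z$ nor $\Z/2\Z*\Z/2\Z$; for $r=1$ the hyperbolic-hyperbolic hypothesis cannot be satisfied). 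Once this case analysis is in hand, the contradiction with $z_f$ being hyperbolic on $A$ is immediate.
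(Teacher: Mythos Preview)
Your argument is correct, but it differs from the paper's approach. The paper simply observes that the incompatibility half of the proof of Proposition~\ref{prop:fill-core} goes through verbatim: one constructs elements $a,b$ with $\ell_A(ab)>0$, uses $\ell_B(c)>0$ for the generator $c$ of $\bar A_e$ (this is exactly the hyperbolic-hyperbolic hypothesis replacing the filling hypothesis), and then exhibits conjugates $ab$ and $c^{-n}abc^n$ whose axes overlap in $A$ but are disjoint in $B$, invoking Lemma~\ref{lem:compat-tree} to conclude incompatibility. Your route is instead structural: assume compatibility, apply Lemma~\ref{lem:core-bs} to the refining tree, and trace an edge with stabilizer conjugate to $\langle z_f\rangle$ through the collapse to $\bar A'$ to force $z_f$ to be $A$-elliptic. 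Both arguments are short and both in fact use only one half of the hyperbolic-hyperbolic hypothesis, matching the paper's subsequent remark. The paper's proof is more hands-on and avoids the overhead of Lemma~\ref{lem:core-bs}; yours is more conceptual and makes the role of the edge-stabilizer constraint transparent. One small point: your sentence about ``exactly one orbit of edges of $\Gamma$ uncollapsed'' is correct (an alignment-preserving map of trees is injective on uncollapsed edges, so the induced map on quotients gives a bijection with $E(\bar B')$), but you do not actually need it---any single edge of $\Gamma$ surviving $\pi_{\bar B'}$ suffices for the contradiction.
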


\begin{proof}
	The proof of Proposition~\ref{prop:fill-core} applies immediately to
	show that the two Bass-Serre trees are not compatible. The construction
	used only the positive translation length of $\ell_B(c)$
	for a generator $c$ of an edge group of $\bar{A}_e$ and that $\bar{B}$ is
	small.
\end{proof}

\begin{remark}
	As noted in the proof, the above proposition is much more general,
	giving a sufficient condition for incompatibility: for any two minimal,
	visible, small graphs of groups, if there is an edge of one with a
	generator hyperbolic in the other then the core of the Bass-Serre trees
	has a rectangle.
\end{remark}

The hyperbolic-hyperbolic condition is sufficient to give a length function
ping-pong argument similar to Clay and Pettet's.

\begin{lemma}\label{lem:dt-free}
	Suppose $\tilde{\sigma}$ and $\tilde{\tau}$ are efficient Dehn twist
	representatives of $\sigma,\tau \in \Out(F_r)$, on one-edge graphs of
	groups $\bar{A}$ and $\bar{B}$ respectively. If $\bar{A}$
	and $\bar{B}$ are hyperbolic-hyperbolic, then for any $n \geq N = 48r^2-48r+3$ the
	group $\langle\sigma^n,\tau^n\rangle \cong F_2$.
\end{lemma}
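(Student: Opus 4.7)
The plan is to apply the ping-pong lemma to the action of $\langle \sigma^n, \tau^n\rangle$ on the set of non-trivial conjugacy classes of $F_r$, partitioned by dominance of the length functions $\ell_A$ and $\ell_B$. I will first apply Lemma~\ref{lem:good-basis} to $\bar{A}$ and $\bar{B}$ separately to obtain bases $\Lambda_A, \Lambda_B$ of $F_r$ whose Lemma~\ref{lem:bclf} bounded cancellation constants are each at most $6r(2r-2) = 12r^2 - 12r =: C$. With threshold $K := 3C$, define
\begin{align*}
P_\sigma &:= \{ [g] \in [F_r]\setminus\{[1]\} : \ell_B(g) > K\ell_A(g)\}, \\
P_\tau &:= \{ [g] \in [F_r]\setminus\{[1]\} : \ell_A(g) > K\ell_B(g)\},
\end{align*}
which are disjoint. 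The hyperbolic-hyperbolic hypothesis yields $\ell_B(z_e^A) \geq 1$ while $\ell_A(z_e^A) = 0$, so $[z_e^A] \in P_\sigma$ (and symmetrically $[z_f^B]\in P_\tau$), giving non-empty starting sets for the argument.

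The technical heart of the proof will be the length-function estimate: for every non-zero integer $n$ and every $g \in F_r$,
\[
\ell_A(\sigma^n g) = \ell_A(g) \quad \text{and} \quad \ell_B(\sigma^n g) \geq (|n| - C)\ell_A(g) - \ell_B(g),
\]
with the symmetric statement for $\tau$ swapping the roles of $A$ and $B$. The equality is immediate: $\tilde{\sigma}$ being a Dehn twist on $\bar{A}$ lifts to an equivariant isometry of $A$, so its induced outer automorphism preserves $\ell_A$. For the inequality I plan to write $g$ in cyclically reduced Bass-Serre normal form with respect to the one-edge graph $\bar{A}$, giving $g = w_0 e^{\epsilon_1} w_1 \cdots e^{\epsilon_k} w_k$ with $k = \ell_A(g)$. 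Applying the Dehn twist yields $\sigma^n(g)$ as a word containing $k$ copies of $z_e^{\pm n}$; each such insertion contributes $|n|\ell_B(z_e^A) \geq |n|$ to the translation length on $B$, while Lemma~\ref{lem:bclf} applied with the good basis $\Lambda_B$ bounds the cancellation at each of the $k$ junctions by $C$.

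With the estimate in hand, the ping-pong inclusions follow by a short computation. Suppose $[g] \in P_\tau$, so $\ell_B(g) < \ell_A(g)/K \leq \ell_A(g)$ and $\ell_A(g) \geq 1$. For $|n| \geq N = 4C + 3 = 48r^2 - 48r + 3$,
\[
\ell_B(\sigma^n g) \geq (|n| - C)\ell_A(g) - \ell_B(g) > (|n|-C-1)\ell_A(g) \geq K\ell_A(g) = K\ell_A(\sigma^n g),
\]
using $|n| - C - 1 \geq K$. Hence $\sigma^n[g] \in P_\sigma$, and symmetrically $\tau^n$ maps $P_\sigma$ into $P_\tau$ whenever $|n| \geq N$; the ping-pong lemma then delivers $\langle\sigma^n,\tau^n\rangle \cong F_2$. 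The principal obstacle is the length-function estimate itself: the linear growth $|n|\ell_B(z_e^A)$ from each of the $\ell_A(g)$ twister insertions must uniformly overpower the accumulated cancellation, which is exactly what the good-basis bound $C \leq 12r^2 - 12r$ furnishes.
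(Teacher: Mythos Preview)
Your approach is the paper's approach: ping-pong on conjugacy classes partitioned by dominance of $\ell_A$ versus $\ell_B$, with Lemma~\ref{lem:bclf} and Lemma~\ref{lem:good-basis} supplying the uniform cancellation bound. The gap is in your central estimate
\[
\ell_B(\sigma^n g)\;\geq\;(|n|-C)\,\ell_A(g)-\ell_B(g),
\]
which you justify by saying there are ``$k$ junctions'' each costing $C$. That count is too optimistic on two fronts. First, in the transverse Bass--Serre normal form $g = e_1 a^{k_1}w_1\cdots e_\ell a^{k_\ell}w_\ell$ there are $2\ell_A(g)$ reduced pieces (the $a^{k_i'}$ and the $w_i'$ in the basis $\Lambda_B$), so iterated application of Lemma~\ref{lem:bclf} costs $2C\ell_A(g)$, not $C\ell_A(g)$. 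Second, the Dehn twist does not insert a fresh block $z_e^{\pm n}$; it changes $a^{k_i}$ to $a^{k_i+\epsilon_i sn}$, so there is internal cancellation $|k_i+\epsilon_i sn|\geq |sn|-|k_i|$ that must also be controlled. Bounding $\sum|k_i'|$ requires a \emph{second} application of bounded cancellation, this time to $g$ itself, yielding $\sum|k_i'|\ell_B(a)\leq \ell_B(g)+2C\ell_A(g)$. Combining the two steps one gets
\[
\ell_B(\sigma^n g)\;\geq\;(|n|-4C)\,\ell_A(g)-\ell_B(g),
\]
with $4C$ in place of your $C$.

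With this corrected constant your threshold $K=3C$ forces $|n|\geq 7C+2$, which overshoots the target $N=4C+3$. The fix is simply to take $K=1$, i.e.\ $P_\sigma=\{\ell_A<\ell_B\}$ and $P_\tau=\{\ell_B<\ell_A\}$, exactly as the paper does: then for $[g]\in P_\tau$ one needs only $(|n|-4C)\ell_A(g)-\ell_B(g)>\ell_A(g)$, and since $\ell_B(g)<\ell_A(g)$ this follows from $|n|>4C+2$, i.e.\ $|n|\geq 4C+3=N$. So your outline is right and your endpoint is right, but the cancellation bookkeeping in the middle needs to be done as above, and the threshold $K$ should be $1$ rather than $3C$.
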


\begin{proof}
	Let $e$ denote the edge of $\bar{A}$, $\bar{A}_e = \langle a\rangle$, $f$
	the edge of $\bar{B}$ and $\bar{B}_f = \langle b \rangle$. Let $s, t$ be
	nonzero integers so that the twisters of $\tilde{\sigma}$ and
	$\tilde{\tau}$ are $z_e = a^s$ and $z_f = b^t$ respectively. We will
	conduct a ping-pong argument similar to Clay and Pettet's free factor
	ping pong technique. Consider the partitioned subset of conjugacy classes
	$P = P_\sigma \sqcup P_\tau$ defined by,
	\begin{align*}
		P_\sigma &= \{ [w]\in P |  \ell_A(w) <
		\ell_B(w) \} \\
		P_\tau &= \{ [w]\in P | \ell_B(w) <
		\ell_A(w) \}. \\
	\end{align*}
	This is a non-trivial partition, $a\in P_\tau$ and
	$b\in P_\sigma$ by hypothesis.

	Our goal then is to find a power $N$ depending only on the rank 
	such that for all $n \geq N$, $\sigma^{\pm n} (P_\tau)
	\subseteq P_\sigma$ and $\tau^{\pm n}(P_\sigma) \subseteq P_\tau$. By 
	the ping-pong lemma, this implies $\langle
	\sigma^n,\tau^n\rangle \cong F_2$, as required. The argument will be
	symmetric.

	Suppose $[w]\in P_\tau$, so that $\ell_A(w) > 0$. Fix a
	cyclically reduced representative in transverse Bass-Serre normal form with
	respect to an ordered basis $\Lambda$ of $F_r$ based at a vertex of $\bar{A}$:
	\[ w = e_1a^{k_1}w_1e_2a^{k_2}w_2\cdots e_\ell a^{k_\ell}w_\ell \]
	where $\ell = \ell_A(w)$, $e_i\in\{e,\bar{e}\}$, we are suppressing the different edge morphisms sending $a$ into
	relevant vertex groups, and each $w_i$ is in the right transversal of
	the image of $a$ in the vertex group involved.
	Let $C$ be the bounded cancellation constant for the fixed basis of
	$F_r$ basis into $B$. With respect to this basis, after an
	appropriate conjugation we have the cyclically reduced conjugacy class
	representative $w'$ satisfying
		\[ |w'| = |a^{k_1'}|+\cdots+|w_{\ell-1}'|+|a^{k_\ell'}|+|w_\ell'| \]
	where $w_i'$ is the reduced word in this basis for the group element
	represented by the arrow $a^{\pm 1}w_ie_{i+1}a^{\pm 1}$ after
	collapsing a maximal tree, and $k_i'$ differs from $k_i$ by a fixed
	amount depending only on the edges $e_i$ and $e_{i+1}$, as each where each
	$w_i'$ might might disturb a fixed number of adjacent copies of
	conjugates of $a$ depending on the particular spelling (this follows
	from the minimality of the Schreier transversals used in transverse
	normal form). We have
	\[ \ell_A(w)  > \ell_B(w) \geq \left(\sum
	|k_i'|\right)\ell_B(a) - 2C(\Lambda,B)\ell_A(w). \]
	Re-writing, we conclude
	\begin{equation}\label{eqn:df-free}
		\sum |k_i'| <
		\left(\frac{1+2C(\Lambda,B)}{\ell_B(a)}\right)\ell_A(w).
		\tag{$\dag$}
	\end{equation}
	
	Using the Dehn twist representative of $\sigma$, we calculate 
		\[ \tilde{\sigma}^n(w) = e_1a^{\epsilon_1s
		n}a^{k_1}w_1e_2a^{\epsilon_2sn}a^{k_2}w_2\cdots
		e_\ell a^{\epsilon_\ell sn}a^{k_\ell}w_\ell \]
	where $\epsilon_i \in \{\pm 1\}$ according to the orientation of $e$
	represented by $e_i$. Reducing these words, and applying bounded
	cancellation in the same fashion we have
	\begin{align*}
		\ell_B(\tilde{\sigma}^n(w)) &\geq \sum_{i=1}^\ell
		(|\epsilon_i sn + k_i'|)\ell_B(a)- 2C(\Lambda, B)\ell_A(w) \\
		&\geq \left(|sn|\ell_A(w)-\sum
		|k_i'|\right)\ell_B(a)-2C(\Lambda, B)\ell_A(w) \\
		&\geq \left(
	|sn|-\frac{1+2C(\Lambda, B)}{\ell_B(a)}\right)\ell_A(w)\ell_B(a)-2C(\Lambda, B)\ell_A(w)
	\end{align*}
	with the last step following from Equation \ref{eqn:df-free}. Thus we
	have
	\[
		\frac{\ell_B(\tilde{\sigma}^n(w))}{\ell_A(\tilde{\sigma}^n(w))}
		=
		\frac{\ell_B(\tilde{\sigma}^n(w))}{\ell_A(w)}
		\geq \left(
		|sn|-\frac{1+2C(\Lambda,
	B)}{\ell_B(a)}\right)\ell_B(a)-2C(\Lambda, B).
		\]
	Therefore, to ensure $\sigma^n(w)\in P_\sigma$ we require
	\[\left(|sn|-\frac{1+2C(\Lambda, B)}{\ell_B(a)}\right)\ell_B(a)-2C(\Lambda, B)
		> 1 \]
	that is,
	\[ |n| > \frac{2+4C(\Lambda, B)}{|s|\ell_B(a)}. \]
	Since $|s|\ell_B(a) \geq 1$, having $|n| > 2+4C(\Lambda, B)$ suffices.
	By the Lemma~\ref{lem:good-basis} there is some basis $\Lambda$ such
	that $C(\Lambda,B)\leq 6r(2r-2)$, any choice of order on this basis
	will do. Let  $N = 48r^2-48r+3$.
	The preceding calculation implies that for all $|n|\geq N$,
	$\sigma^{\pm n}(P_\tau)\subseteq P_\sigma$. By a similar calculation,
	(using a good basis $\Lambda'$ so that $C(\Lambda', A) \leq 6r(2r-2)$), we find that for
	any $|n| \geq N$, $\tau^{\pm n}(P_\sigma)\subseteq P_\tau$.
	Therefore the group $\langle \sigma^N, \tau^N\rangle$ acting on $P = P_\sigma \sqcup P_\tau$ satisfies
	the hypotheses of the ping-pong lemma, and we conclude $\langle
	\sigma^N,\tau^N\rangle \cong F_2$ as required.
\end{proof}

\begin{remark}
	The reader familiar with Cohen and Lustig's skyscraper lemma and
	parabolic orbits theorem may wonder why these facts did not feature in
	the above proof. Both of these tools are not strong enough
	to give the uniform convergence necessary to carry out a ping-pong type
	argument on $\overline{CV}_r$; the skyscraper lemma has constants that depend on the
	particular skyscraper involved, and the parabolic orbits theorem gives
	pointwise convergence of length functions on conjugacy classes but does
	not control the rate of convergence. A priori, this rate could be very
	bad, as demonstrated by the examples of Bestvina, Feighn, and
	Handel~\cite{bfh-ii}*{Remark 4.24}.
\end{remark}

Together Lemmas \ref{lem:dt-com} and \ref{lem:dt-free} come very close to a
proof of Theorem \ref{thm:dt-out-mccarthy}.
Nature is not so kind, and there are incompatible graphs of groups that are not
hyperbolic-hyperbolic.

\begin{example}
	\label{ex:elip-elip}
	Let $A$ and $C$ be the Bass-Serre trees of the following graphs of groups
	decompositions of $F_3$.
	\begin{center}
		\begin{tikzpicture}[thick,font=\small,every text node
	part/.style={align=center}]
	\node [vertex,label=below:{$v$ \\ $G_{v} = \langle
	a^{-1}ba,b,c\rangle$}] (P) at (0,-1) {};
	\draw (0,0) circle (1cm)  [postaction={decorate},decoration={
    markings,
	mark=at position 0.25 with {\arrow{>}}}];
	\coordinate (T) at (0,1);
	\coordinate (A) at (-1,0);
	\node [left =2pt of A] {$\bar{A} =$};
	\node [above =2pt of T] {$G_a = \langle b\rangle$ \\ $a$};
\end{tikzpicture}
\qquad
\begin{tikzpicture}[thick,font=\small,every text node
	part/.style={align=center}]
	\node [vertex,label=below:{$v$ \\ $G_{v} = \langle
	a^{-1}ca,b,c\rangle$}] (P) at (0,-1) {};
	\draw (0,0) circle (1cm)  [postaction={decorate},decoration={
    markings,
	mark=at position 0.25 with {\arrow{>}}}];
	\coordinate (S) at (0,1);
	\coordinate (B) at (-1,0);
	\node [left =2pt of B] {$\bar{C} =$};
	\node [above =2pt of S] {$G_a = \langle c\rangle$ \\ $a$};
\end{tikzpicture}
 	\end{center}

	Let $\sigma$ and $\rho$ be the Nielsen transformations represented by
	Dehn twists about $\bar{A}$ and $\bar{C}$ by $b$ and $c$ respectively,
	so that 
	\begin{align*}
		\sigma(a) &= ba & \rho(a) &= ca \\
		\sigma(b) &= b & \rho(b) &= b \\
		\sigma(c) &= c & \rho(c) &= c.
	\end{align*}
	We claim that $\core(A,C)$ has a rectangle, so that $i(A,C) > 0$.
	Indeed, focus on the edges $e\subseteq A$ and $f\subseteq C$, each on
	the axis of $a$ with the induced orientation and the given edge
	stabilizers, illustrated below.
	\begin{center}
		\begin{tikzpicture}[thick,font=\small,every text node
	part/.style={align=center}]
	\node [vertex,label=below:{$\langle a^{-1}ba,b,c\rangle$}] (a) at
	(-2,0) {};
	\node [vertex,label=below:{$\langle b,aba^{-1},aca^{-1}\rangle$}] (b)
	at (2,0) {};
	\draw (a) -- (b) [postaction={decorate},decoration={
    markings,
	mark=at position 0.5 with {\arrow{>}}}] node [midway,above] {$\langle b \rangle$
	\\ $e$ };
\end{tikzpicture}
\qquad
\begin{tikzpicture}[thick,font=\small,every text node
	part/.style={align=center}]
	\node [vertex,label=below:{$\langle a^{-1}ca,b,c\rangle$}] (a) at
	(-2,0) {};
	\node [vertex,label=below:{$\langle c,aba^{-1},aca^{-1}\rangle$}] (b)
	at (2,0) {};
	\draw (a) -- (b) [postaction={decorate},decoration={
    markings,
	mark=at position 0.5 with {\arrow{>}}}] node [midway,above] {$\langle c \rangle$
	\\ $f$ };
\end{tikzpicture}
 	\end{center}
	Note that $a\in\ebox{e}\cap\ebox{f}$ and
	$a^{-1}\in\ebox{\bar{e}}\cap\ebox{\bar{f}}$. Further, investigation of
	the diagrams shows that $b^{-1}ab\in\ebox{e}\cap\ebox{\bar{f}}$ and
	$c^{-1}ac\in \ebox{\bar{e}}\cap\ebox{f}$, so by
	Lemma~\ref{lem:compat-tree}, $e\times f\subseteq\core(A,B)$.

	This example is not hyperbolic-hyperbolic; $\ell_A(c) = \ell_C(b) = 0$.
	Nevertheless $\langle \sigma,\rho\rangle\cong F_2$. Indeed,
	$\omega\mapsto \omega(a)a^{-1}$ describes an isomorphism
	$\langle\sigma,\rho\rangle\cong\langle b,c\rangle$.
\end{example}

\begin{example}
	\label{ex:elip-hyp}
	Let $A$ and $B$ be the Bass-Serre trees of the following graphs of group
	decompositions of $F_3$.
	\begin{center}
		\begin{tikzpicture}[thick,font=\small,every text node
	part/.style={align=center}]
	\node [vertex,label=below:{$v$ \\ $G_{v} = \langle
	a^{-1}ba,b,c\rangle$}] (P) at (0,-1) {};
	\draw (0,0) circle (1cm)  [postaction={decorate},decoration={
    markings,
	mark=at position 0.25 with {\arrow{>}}}];
	\coordinate (T) at (0,1);
	\coordinate (A) at (-1,0);
	\node [left =2pt of A] {$\bar{A} =$};
	\node [above =2pt of T] {$G_a = \langle b\rangle$ \\ $a$};
\end{tikzpicture}
\qquad
\begin{tikzpicture}[thick,font=\small,every text node
	part/.style={align=center}]
	\node [vertex,label=below:{$v$ \\ $G_{v} = \langle
	a,c,b^{-1}cb\rangle$}] (P) at (0,-1) {};
	\draw (0,0) circle (1cm)  [postaction={decorate},decoration={
    markings,
	mark=at position 0.25 with {\arrow{>}}}];
	\coordinate (S) at (0,1);
	\coordinate (B) at (-1,0);
	\node [left =2pt of B] {$\bar{B} =$};
	\node [above =2pt of S] {$G_b = \langle c\rangle$ \\ $b$};
\end{tikzpicture}
 	\end{center}

	Let $\sigma$ and $\tau$ be the Nielsen transformations represented by
	Dehn twists about $\bar{A}$ and $\bar{B}$ by $b$ and $c$ respectively,
	so that 
	\begin{align*}
		\sigma(a) &= ba & \tau(a) &= a \\
		\sigma(b) &= b & \tau(b) &= cb \\
		\sigma(c) &= c & \tau(c) &= c.
	\end{align*}
	Again we have a rectangle in $\core(A,B)$. Consider $g = a,
	h=bab^{-1}$. Calculating with length functions we have
	\[ \ell_A(g) = \ell_A(h) = 1 \qquad \ell_B(g) = \ell_B(h) = 0\]
	and also
	\begin{gather*}
		\ell_A(gh) = 2 \neq 0 = \ell_A(gh^{-1}) \\
		\ell_B(gh) = \ell_B(gh^{-1}) = 2 > 0 = \ell_B(g)+\ell_B(h).
	\end{gather*}
	Therefore $A$ and $B$ do not have compatible combinatorics, so by
	Lemma~\ref{lem:compat-tree} $\core(A,B)$ has a rectangle and $i(A,B) >
	0$.

	This example is also not hyperbolic-hyperbolic, $\ell_B(b) = 1$ but
	$\ell_A(c) = 0$. Again, however, $\langle\sigma^3,\tau^3\rangle \cong F_2$.
	For a ping-pong set we use $P = \{ wa \in F_3 | w\in \langle
	b,c\rangle\}$ reduced words ending in $a$, and ping-pong partition $P_\sigma
	= \{ wb^{\pm 2}a \}$ and $P_\tau = P\setminus P_\sigma$. For all $N\neq
	0$, we have $\sigma^{3N}(P_\tau) \subseteq P_\sigma$ and
	$\tau^{3N}(P_\sigma) \subseteq P_\tau$. Note that it is only out of an
	aesthetic desire to use the same power of $N$ on both generators that
	we use $\tau^3$, it is the case that $\tau^N(P_\sigma) \subseteq
	P_\tau$ for all $N\neq 0$.
\end{example}

Both of these examples are presented with respect to a particularly nice basis,
and by taking the associated homotopy equivalence of the wedge of three circles
marked by the given basis, we see that all automorphisms in the above example
are upper triangular with respect to a fixed filtration. Both ping-pong arguments rely on
the interaction between the suffixes in this particular upper triangular
setting. This suggests a dichotomy, either length function ping-pong is
possible, or every element of the group generated by a pair of Dehn twists is polynomially
growing. To analyze the growth of elements in a subgroup of $\Out(F_r)$ generated by a pair of
Dehn twists we will follow the cue of Bestvina, Feighn, and Handel, and
understand the growth in topological models associated to the Dehn twists.

\section{Simultaneous graphs of spaces and normal forms}
\label{sec:sgos}

Guirardel gives a topological interpretation of the intersection number of two simplicial
$F_r$-trees.

\begin{theorem}[\cite{guirardel-core}*{Theorem 7.1}]\label{thm:top-core}
	Given two non-trivial
	simplicial $F_r$ trees $A$ and $B$ there exists a cell complex $X$ with
	$\pi_1(X) \cong F_r$ and two 2-sided subcomplexes $Y_A, Y_B \subset X$
	intersecting transversely such that $i(A,B) = |\pi_0(Y_A\cap Y_B)|$.
\end{theorem}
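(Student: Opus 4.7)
The plan is to construct $X$ as a topological thickening of the augmented Guirardel core $\hatcore(A,B)$, resolving the cell stabilizers of the diagonal $F_r$-action by aspherical spaces. The square structure on $\hatcore \subset A\times B$ — every square has the form $e_A\times e_B$ for edges of $A$ and $B$ — will produce the transverse intersections of $Y_A$ and $Y_B$, and since the augmentation $\hatcore\setminus\core$ contains no squares, these intersections will be counted by $i(A,B)$.

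First I would extract the quotient graphs of groups $\bar A = A/F_r$ and $\bar B = B/F_r$ from the machinery of Section~\ref{sec:bs-theory}. The diagonal $F_r$-action on $\hatcore$ has cell stabilizer $\Stab_A(\pi_A(\sigma)) \cap \Stab_B(\pi_B(\sigma))$ for each cell $\sigma$ (the same computation that appears in the proof of Lemma~\ref{lem:core-bs}), and this equips $\hatcore/F_r$ with a complex-of-groups structure whose vertex, edge, and square groups are intersections of the corresponding $\bar A$- and $\bar B$-groups. I would then build $X$ by replacing each cell orbit by a $K(\mathrm{Stab},1)$ and gluing along the prescribed face inclusions; this is a two-dimensional generalization of the graph-of-groups/graph-of-spaces correspondence from Section~\ref{sec:bs-theory}. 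Since $\hatcore$ is connected with convex (hence contractible) fibers under $\pi_A$, it is contractible, and the standard complex-of-groups calculation then gives $\pi_1(X) \cong F_r$.

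With $X$ in hand, I define $Y_A\subset X$ as the union over edges $e\in E(\bar A)$ of the subcomplex of $X$ lying over the midpoint of $e$, and $Y_B$ analogously. Each is a disjoint union of $K(\bar A_e,1)$ (resp.\ $K(\bar B_f,1)$) spaces, one per unoriented edge. Both are two-sided because the midpoint of an edge locally separates the simplicial tree into two components, a property preserved under the thickening. Transversality of $Y_A$ and $Y_B$ is inherited from the local product structure $e_A\times e_B$ of each square of $\hatcore$. The components of $Y_A\cap Y_B$ are then precisely the aspherical thickenings built over the square-orbits of $\hatcore/F_r$; since the augmentation tines $\{v_A\}\times e_B$ and $e_A\times\{v_B\}$ in $\hatcore\setminus\core$ contain no squares, these coincide with the square-orbits of $\core/F_r$, whose cardinality is $\vol(\core(A,B)) = i(A,B)$ by Definition~\ref{def:isect}. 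Each aspherical thickening is connected, yielding $|\pi_0(Y_A\cap Y_B)| = i(A,B)$.

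The main obstacle is verifying $\pi_1(X)=F_r$ for the thickened complex. This amounts to exhibiting $\hatcore$ as a developing space for a simply connected complex of groups with the prescribed stabilizers; the Bass--Serre correspondence in Section~\ref{sec:bs-theory} handles the one-dimensional analogue directly, and the essential extra input for the two-dimensional extension is the contractibility of $\hatcore$, which rests on its connectedness together with convex fibers as built into Definition~\ref{def:core}. A secondary technical point is the possible disconnectedness of the non-augmented $\core$, sidestepped by working with $\hatcore$ throughout and observing that the augmentation contributes only edges, not squares.
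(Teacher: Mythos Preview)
Your complex-of-spaces approach is correct in spirit and closely related to the paper's, but the paper makes a more economical choice that sidesteps your ``main obstacle'': it sets $\tilde X = \hatcore(A,B)\times T$ where $T$ is the universal cover of a wedge of $r$ circles, and takes $X$ to be the quotient by the diagonal $F_r$-action. Since $F_r$ acts freely on $T$, the diagonal action on $\hatcore\times T$ is free; since $\hatcore\times T$ is a product of contractible spaces, $\pi_1(X)\cong F_r$ is immediate, with no complex-of-groups developing argument needed. Your aspherical thickening produces a homotopy-equivalent space, but by a longer route.

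There is, however, a genuine gap in your treatment of the augmentation. You describe the edges of $\hatcore\setminus\core$ as ``tines $\{v_A\}\times e_B$ and $e_A\times\{v_B\}$'', but this is not what they are: the augmentation arcs are \emph{diagonal}, lying inside products $e_A\times e_B$ (these are exactly the ``twice-light'' pieces of Definition~\ref{def:sgos}; compare also the diagonal edge type in the proof of Lemma~\ref{lem:core-bs}). The midpoint of such a diagonal edge projects to an edge-midpoint in \emph{both} $A$ and $B$, so its thickening lands in $Y_A\cap Y_B$ --- and there the intersection is not transverse, since that component of $Y_A$ coincides with a component of $Y_B$. Without correction your count $|\pi_0(Y_A\cap Y_B)|$ overcounts by the number of augmentation-edge orbits, and the transversality claim fails. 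The paper repairs this by perturbing: rather than using midpoints $M_B$ for $Y_B$, it uses an equivariant choice $M_B'$ of interior points of $B$-edges distinct from the midpoints, so that the diagonal augmentation edges no longer contribute to $Y_A\cap Y_B'$ and only the genuine squares of $\core$ are counted.
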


The space $X$ is constructed from the core. Let $\tilde{X} = \hatcore(A,B)\times
T$ where $T \cong \tilde{R}_r$ is the universal cover of a fixed wedge of $r$ circles.
Let $M_A$ be the set of midpoints of edges of $A$ and $M_B$ be the set of
midpoints of edges of $B$. The spaces $\tilde{Y}_A = \pi_A^{-1}(M_A)\times T$
and $\tilde{Y}_B = \pi_B^{-1}(M_B)\times T$ are a family of two-sided
subcomplexes of $\tilde{X}$. The connected components of $\tilde{Y}_A\cap
\tilde{Y}_B$ are of the form $x\times T$ where $x$ is a point in the interior
of a 2-cell of $\hatcore(A,B)$ or a midpoint of an edge in
$\hatcore\setminus\core$. The intersections of the form $x\times T$ when $x$ is
a midpoint of an edge in the augmented core are not transverse, indeed $x\times
T$ is a connected component of both $\tilde{Y}_A$ and $\tilde{Y}_B$ in this
case. A transverse intersection can be obtained by instead using $M_B'$ and
an equivariant choice of points in the interior of the edges of $B$ none of which
are the midpoints, denote this perturbation of $\tilde{Y}_B$ by $\tilde{Y}_B'$.  The
connected components of $\tilde{Y}_A \cap \tilde{Y}_B'$ are in one-to-one correspondence
with the 2-cells of $\hatcore(A,B)$. The quotients by the diagonal $F_r$
action, denoted $X, Y_A,$ and $Y_B'$ respectively, are the desired spaces.

These quotient spaces can be viewed through the lens of model spaces for graphs
of groups, discussed in Section~\ref{sec:bs-theory}. Let $\bar{A}$ and $\bar{B}$ be the graphs
of groups covered by $A$ and $B$ respectively. The compositions
$\pi_A\circ\pi_{\hatcore}$ and $\pi_B\circ\pi_{\hatcore}$ of projection maps
descend to the quotient and give maps $q_A:X\to\bar{A}$ and $q_B: X\to\bar{B}$.
These maps make $X$ a graph of spaces over $\bar{A}$ and $\bar{B}$ simultaneously, with
the connected components of $Y_A$ and $Y_B$ in the role of edge spaces. Denote
by $\mathcal{A}$ and $\mathcal{B}$ the graphs of spaces structures on $X$
induced by $q_A$ and $q_B$ respectively, with $\mathcal{A}_v = q^{-1}_A(v)$ the
vertex space over $v\in V(\bar{A})$, $\mathcal{A}_e = q^{-1}(e)$ the mapping
cylinder over the midpoint space $\mathcal{A}_e^m = q^{-1}(m_e)$ of an edge
$e\in E(\bar{A})$, and similar notation for $\mathcal{B}$. The goal of this
section is to establish a normal form for paths and circuits in a simultaneous
graph of spaces. This behavior of the core is captured in the
following definition.

\begin{definition}\label{def:sgos}
	Let $\bar{A}$ and $\bar{B}$ be two $F_r$ graphs of groups. A
	complex $X$ is a \emph{simultaneous graph of spaces resolving $\bar{A}$ and
	$\bar{B}$} if there are maps $q_A: X\to \bar{A}$ and $q_B:X\to\bar{B}$
	making $X$ a graph of spaces for $\bar{A}$ and $\bar{B}$ respectively
	(the induced structures denoted $\mathcal{A}$ and $\mathcal{B}$),
	and the following conditions on subspaces are satisfied:
	\begin{enumerate}
		\item The midpoint spaces $\mathcal{A}_e^m$ and
			$\mathcal{B}_f^m$ are either equal or intersect
			transversely for all edges $e\in E(\bar{A})$ and $f\in
			E(\bar{B})$.
		\item The intersection $\mathcal{A}_v\cap\mathcal{B}_e$ is the
			mapping cylinder for the maps of $\mathcal{A}_v\cap
			\mathcal{B}_e^m$ into
			$\mathcal{A}_v\cap\mathcal{B}_{o(e)}$ and
			$\mathcal{A}_v\cap\mathcal{B}_{t(e)}$ as a sub-mapping
			cylinder of $\mathcal{B}_e$.
	\end{enumerate}
	The \emph{core} of $X$ is the subcomplex
	\[ \bigcup_{\substack{e\in E(\bar{A})\\ f\in E(\bar{B})}}
	\mathcal{A}_e\cap\mathcal{B}_f \]
	A subcomplex $Y=\mathcal{A}_e\cap\mathcal{B}_f$ of the core is
	\emph{twice-light} if $\mathcal{A}_e^m = \mathcal{B}_f^m$.
\end{definition}

\begin{corollary}\label{cor:top-core}
	For any two $F_r$ graphs of groups $\bar{A}$ and $\bar{B}$ there is a
	simultaneous graph of spaces resolving them.
\end{corollary}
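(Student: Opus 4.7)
The plan is to construct $X$ by following the recipe used in Guirardel's Theorem~\ref{thm:top-core} and then to verify the additional structural requirements of Definition~\ref{def:sgos}. Starting from the Bass-Serre trees $A$ and $B$ of $\bar{A}$ and $\bar{B}$, form the augmented core $\hatcore(A,B)\subseteq A\times B$, fix a free simplicial $F_r$-tree $T$ (for instance the universal cover of a wedge of $r$ circles), set $\tilde{X} = \hatcore(A,B)\times T$ with the diagonal $F_r$-action, and define $X = \tilde{X}/F_r$. The projections $\hatcore(A,B)\to A\to\bar{A}$ and $\hatcore(A,B)\to B\to\bar{B}$ descend to maps $q_A:X\to\bar{A}$ and $q_B:X\to\bar{B}$, and I will show these make $X$ a simultaneous graph of spaces.

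Showing that $q_A$ and $q_B$ are individually graph-of-spaces structures is essentially diagrammatic. Because each fiber of $\pi_A$ is convex by the definition of $\hatcore(A,B)$, and because $T$ is free, preimages of vertices of $\bar{A}$ descend to vertex spaces, preimages of midpoints descend to midpoint spaces, and the $[0,1]$-coordinate on edges of $A$ lifts to give the mapping cylinder coordinate on the edge spaces; the edge groups are precisely the stabilizers of edge lifts. The same analysis applies verbatim with $B$ replacing $A$.

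The crux is the verification of the two conditions of Definition~\ref{def:sgos}. For a pair of edges $e\in E(\bar{A})$ and $f\in E(\bar{B})$, the intersection $\mathcal{A}_e^m\cap\mathcal{B}_f^m$ corresponds, lifted to $\tilde{X}$, to $\pi_A^{-1}(m_e)\cap\pi_B^{-1}(m_f)$ crossed with $T$. The convex-fiber and square-complex structure of $\hatcore(A,B)$ forces each pair of lifts of $e$ and $f$ to contribute in one of two mutually exclusive ways: either there is a 2-cell of $\hatcore(A,B)$ equal to the product of those lifts (giving a transverse intersection at the 2-cell center crossed with $T$), or there is a diagonal edge of $\hatcore(A,B)$ that projects linearly and surjectively onto both lifts (in which case $\pi_A^{-1}(m_e)$ and $\pi_B^{-1}(m_f)$ both meet this edge at its midpoint, so the midpoint spaces coincide along this component, i.e., a twice-light piece). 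This dichotomy gives condition (1). Condition (2) reduces to the observation that the $[0,1]$-coordinate on an edge of $B$ restricts to a mapping cylinder coordinate on the preimage of any vertex of $\bar{A}$, which is again immediate from convexity of the $\pi_A$-fibers.

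The main obstacle I anticipate is organizing the case analysis cleanly enough to see that no intersection is a mixture of the two dichotomous types. In Guirardel's proof of Theorem~\ref{thm:top-core} the twice-light case is removed by perturbing $Y_B$ to $Y_B'$; here I must retain the twice-light pieces and argue that they never occur alongside a transverse component for the same pair $(e,f)$. This is a combinatorial consequence of the fact that a 2-cell of $\hatcore(A,B)$ cannot collapse onto a diagonal edge, but the bookkeeping requires care.
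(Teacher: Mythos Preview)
Your approach is the same as the paper's: you build $X=\hatcore(A,B)\times_{F_r}T$ and use the two projections as $q_A$ and $q_B$. The paper's proof is a single sentence pointing back to the construction already described in the paragraphs between Theorem~\ref{thm:top-core} and Definition~\ref{def:sgos}; your proposal simply unpacks the verification of Definition~\ref{def:sgos} that the paper leaves implicit (the definition was tailored to this construction, so the paper treats the corollary as essentially ``by design''). Your closing worry about a single pair $(e,f)$ exhibiting both transverse and twice-light behavior is not addressed by the paper either, and is not a genuine obstruction to the argument as stated there.
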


\begin{proof} 
	The space $X$ constructed in the proof of Theorem
	\ref{thm:top-core} from the core of the Bass-Serre trees covering
	$\bar{A}$ and $\bar{B}$ is the desired space.
\end{proof}

\begin{remark}
	When $X = \hatcore \times_{F_r} T$, the core of $X$ is the closure of
	the preimages of the interiors of the 2-cells of $\hatcore$ and the
	edges of $\hatcore\setminus\core$. The latter are the twice-light
	subcomplexes.
\end{remark}

Edges $e\subseteq X^{(1)}$ in the 1-skeleton of a simultaneous graph of
spaces fall into a taxonomy given by the two decompositions. Recall that in a
single graph of spaces structure $\mathcal{X}$, an edge in $X^{(1)}$ is
\emph{$\mathcal{X}$-nodal} if it lies in a vertex space, and
\emph{$\mathcal{X}$-crossing} otherwise. We extend this terminology to a
simultaneous graph of spaces.

\begin{definition}
	Let $e\subseteq X^{(1)}$ be an edge in the 1-skeleton of a
	simultaneous graph of spaces resolving $\bar{A}$ and $\bar{B}$. We say
	$e$ is
	\begin{description}
		\item[nodal] if it is both $\mathcal{A}$- and $\mathcal{B}$-nodal,
		\item[$\mathcal{A}$-crossing] if it is
			$\mathcal{A}$-crossing but $\mathcal{B}$-nodal,
		\item[$\mathcal{B}$-crossing] if it is $\mathcal{B}$-crossing
			but $\mathcal{A}$-nodal,
		\item[double-crossing] if it is both $\mathcal{A}$-crossing and
			$\mathcal{B}$-crossing.
	\end{description}
\end{definition}

The possible ambiguity of terminology will be avoided by always making clear
whether we are considering a single graph of spaces structure or a simultaneous
graph of spaces structure.

For a single graph of spaces, based paths have a normal form that gives a
topological counterpart to the Bass-Serre normal form for the fundamental
groupoid. Recall Lemma~\ref{lem:gos-nf}, that every path based in the one
skeleton of a graph of spaces is homotopic relative to the endpoints to a path
	\[ v_0H_1v_1H_2\cdots H_nv_n \]
where each $v_i$ is a (possibly trivial) tight edge path of
$\mathcal{X}$-nodal edges, each $H_i$ is $\mathcal{X}$-crossing, and for
all $1\leq i \leq n-1,$ $H_iv_iH_{i+1}$ is not homotopic relative to the
endpoints to an $\mathcal{X}$-nodal edge path. A similar normal form is
possible in a simultaneous graph of spaces.

\begin{lemma}
	\label{lem:sgos-nf}
	Every path in $X$, a simultaneous graph of spaces resolving $\bar{A}$
	and $\bar{B}$, is homotopic relative to the endpoints to a path of the
	form (called \emph{simultaneous normal form})
	\[ W_{0,0} K_{0,1}W_{0,1}\cdots K_{0,n_0}W_{0,n_0}H_1W_{1,0}\cdots H_m
	W_{m,0} K_{m,1}\cdots K_{m,n_m} W_{m,n_m} \]
	where the $W_{i,j}$ are (possibly trivial) tight edge paths of nodal
	edges, the $K_{i,j}$ are $\mathcal{B}$-crossing edges, and the $H_i$
	are either $\mathcal{A}$-crossing or double-crossing edges. Further this
	path is in normal form for both $\mathcal{A}$ and $\mathcal{B}$, so
	that the number of $\mathcal{B}$-crossing edges plus double-crossing edges
	and the number of $\mathcal{A}$-crossing edges plus double-crossing edges
	are both invariants of the relative homotopy class of the path. A
	similar statement holds for free homotopy classes of loops.
\end{lemma}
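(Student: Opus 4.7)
The plan is to combine the two single-structure normal forms of Lemma~\ref{lem:gos-nf} using the compatibility encoded in condition 2 of Definition~\ref{def:sgos}. First I would apply Lemma~\ref{lem:gos-nf} to the $\mathcal{A}$-structure to rewrite the given path as $v_0 H_1 v_1 \cdots H_m v_m$ in $\mathcal{A}$-normal form, where each $H_i$ is either a pure $\mathcal{A}$-crossing or a double-crossing edge and each $\mathcal{A}$-nodal subpath $v_i$ lies in a single $\mathcal{A}$-vertex space $\mathcal{A}_{a_i}$. Condition 2 of Definition~\ref{def:sgos} implies that $\mathcal{A}_{a_i}$ itself carries a graph of spaces structure induced from $\mathcal{B}$, whose crossing edges are precisely the pure $\mathcal{B}$-crossing edges of $X$ meeting $\mathcal{A}_{a_i}$ and whose nodal edges are precisely the simultaneously nodal edges of $X$ meeting $\mathcal{A}_{a_i}$. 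I would then apply Lemma~\ref{lem:gos-nf} inside each induced structure to put each $v_i$ into the form $W_{i,0} K_{i,1} W_{i,1} \cdots K_{i,n_i} W_{i,n_i}$.

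The resulting path has the structural shape claimed, but I still need to rule out $\mathcal{B}$-reductions on the whole of $X$, in particular any whose subpath straddles some $H_i$. The hard part is controlling these cross-$H$ reductions, and the key observation is that a $\mathcal{B}$-reduction is a rel-endpoint homotopy, so by the invariance part of Lemma~\ref{lem:gos-nf} applied to the $\mathcal{A}$-structure it preserves the number of $\mathcal{A}$-crossing edges of the whole path. Since a representative is in $\mathcal{A}$-normal form exactly when it attains the minimum number of $\mathcal{A}$-crossings in its homotopy class (otherwise a further $\mathcal{A}$-reduction would be available, contradicting minimality), any $\mathcal{B}$-reduction automatically preserves $\mathcal{A}$-normal form. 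I therefore iterate: whenever a cross-$H$ $\mathcal{B}$-reduction is available, apply it and then re-apply the within-$v_i$ step to the possibly reshuffled $\mathcal{A}$-nodal subpaths. Each iteration strictly decreases the total number of $\mathcal{B}$-crossings, a quantity bounded below by the invariant from Lemma~\ref{lem:gos-nf} applied to $\mathcal{B}$, so the procedure terminates in a representative that is simultaneously in $\mathcal{A}$-normal form and in $\mathcal{B}$-normal form, that is, in simultaneous normal form.

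Once existence is in hand, invariance of the two counts follows at once from Lemma~\ref{lem:gos-nf}: the number of $\mathcal{A}$-crossing plus double-crossing edges equals the invariant $n$ from Lemma~\ref{lem:gos-nf} applied to the $\mathcal{A}$-structure, and symmetrically for $\mathcal{B}$. The statement for free homotopy classes of loops follows by the same combination of the corresponding loop versions of Lemma~\ref{lem:gos-nf}.
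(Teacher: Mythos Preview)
Your overall strategy matches the paper's: first put the path in $\mathcal{A}$-normal form, then $\mathcal{B}$-normalize each $\mathcal{A}$-nodal block $v_i$ inside its $\mathcal{A}$-vertex space using condition~2 of Definition~\ref{def:sgos}, then deal with residual $\mathcal{B}$-reductions that straddle an $H_i$. The first two steps are correct and are exactly what the paper does.

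The gap is in your treatment of the cross-$H$ reductions. You assert that ``by the invariance part of Lemma~\ref{lem:gos-nf} applied to the $\mathcal{A}$-structure [a $\mathcal{B}$-reduction] preserves the number of $\mathcal{A}$-crossing edges of the whole path.'' This misreads the invariance statement. Lemma~\ref{lem:gos-nf} says the \emph{minimum} number of $\mathcal{A}$-crossings (the count in any $\mathcal{A}$-normal representative) is a homotopy invariant; it does not say that an arbitrary homotopy between edge paths preserves the actual count. A $\mathcal{B}$-reduction replaces a subpath by some $\mathcal{B}$-nodal path, and nothing prevents that replacement from introducing extra $\mathcal{A}$-crossing edges, destroying $\mathcal{A}$-normal form. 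Once $\mathcal{A}$-normal form is lost, your ``re-apply the within-$v_i$ step'' no longer makes sense as stated, and your termination argument breaks down: naively re-normalizing for $\mathcal{A}$ could in turn increase the $\mathcal{B}$-count, and you have not ruled out oscillation.

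The paper repairs exactly this point. It analyzes the possible shapes of a cross-$H$ reducible subpath (either $K\,W\,H$ with $H$ double-crossing, or $K\,W\,H\,W\,K$ with $H$ pure $\mathcal{A}$-crossing), performs the $\mathcal{B}$-reduction, and then re-normalizes for $\mathcal{A}$ \emph{inside the relevant $\mathcal{B}$-vertex space}, using condition~2 of Definition~\ref{def:sgos} symmetrically. Because this $\mathcal{A}$-re-normalization is confined to a $\mathcal{B}$-vertex space it introduces no new $\mathcal{B}$-crossings, and applying Lemma~\ref{lem:gos-nf} to the subpath shows the $\mathcal{A}$-count is restored to its original value; hence the global path is again in $\mathcal{A}$-normal form with strictly fewer $\mathcal{B}$-crossings, and the iteration terminates. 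The missing idea in your argument is precisely this localization of the $\mathcal{A}$-re-normalization to a $\mathcal{B}$-vertex space.
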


\begin{proof}
	Throughout this proof all homotopies will be homotopies of paths
	relative to the endpoints. Suppose $\gamma$ is a path in $X$. First, by
	Lemma \ref{lem:gos-nf}, $\gamma$ is homotopic to a path in
	$\mathcal{A}$-normal form
		\[v_0H_1v_1H_2v_2\cdots H_mv_m \]
	with each $v_i$ an $\mathcal{A}$-nodal path and each $H_i$ either
	$\mathcal{A}$-crossing or double-crossing. With respect to $\mathcal{B}$,
	each $v_i$ is an edge path, not necessarily in normal form, of the form
		\[ W_{i,0}K_{i,1}W_{i,1}\cdots K_{i,n_i} W_{i,n_i} \]
	where each $W_{i,j}$ is $\mathcal{B}$-nodal (and so nodal in the
	simultaneous graph of spaces) and each $K_{i,j}$ is
	$\mathcal{B}$-crossing (in the simultaneous graph of spaces sense).  
	We can take this path to $\mathcal{B}$-normal form by erasing pairs of
	crossing edges, but we must do so without introducing
	$\mathcal{A}$-crossing edges. 
	
	Suppose for some $i$ the path
	$K_{i,j}W_{i,j}K_{i,j+1}$ is homotopic to a path $W_{ij}'$ that is
	$\mathcal{B}$-nodal. Suppress the common index $i$. Let $p$ be the
	vertex of $\bar{A}$ such that $K_jW_jK_{j+1}\subseteq\mathcal{A}_p$,
	$e$ the edge of $\bar{B}$ such that $K_jW_jK_{j+1}\subseteq
	\mathcal{B}_e$, so that $W_j\subseteq \mathcal{B}_{t(e)}$ and
	$W_j'\subseteq \mathcal{B}_{o(e)}$. Since $K_jW_jK_{j+1}\subseteq
	\mathcal{A}_p\cap\mathcal{B}_e$, this is a path in the mapping cylinder
	for the inclusions of $\mathcal{A}_p\cap \mathcal{B}_e^m$ into the
	endpoints, and $W_j$ is a fiber of this cylinder. Thus $K_jW_jK_{j+1}$
	is homotopic via a homotopy in $\mathcal{A}_p\cap\mathcal{B}_e$ to a path $W_j'' \subseteq
	\mathcal{A}_p\cap\mathcal{B}_{o(e)}$. Using $W_j''$ to erase the pair
	of crossing edges, we see that each $v_i$ can be expressed in
	$\mathcal{B}$ normal form and remain $\mathcal{A}$-nodal. Thus
	$\gamma$ is homotopic to a path of the form
	\[ W_{0,0} K_{0,1}W_{0,1}\cdots K_{0,n_0}W_{0,n_0}H_1W_{1,0}\cdots H_m
	W_{m,0} K_{m,1}\cdots K_{m,n_m} W_{m,n_m}. \]
	
	This path may not be in $\mathcal{B}$-normal form. There are two
	possible cases, and in both we will show that it is possible to erase a
	pair of $\mathcal{B}$-crossing edges without destroying
	$\mathcal{A}$-normal form. 
	
	First, suppose this path is not $\mathcal{B}$-normal because there is
	some $i$ such that $K_{i,n_i}W_{i,n_i}H_{i+1}$ (or symmetrically
	$H_iW_{i,0} K_{i,1}$) is homotopic to a path $W_i'$ that is
	$\mathcal{B}$-nodal. Let $f$ be the edge of $\bar{B}$ crossed by
	$K_{i,n_i}$. In this case, the endpoints map to $o(f)$ by $q_B$, and
	$q_B(W_{i,0}) = t(f)$ so by continuity $q_B(H_{i+1}) = \bar{f}$; thus $H_{i+1}$ is double-crossing. Note
	that this path is already in $\mathcal{A}$-normal form.  Again suppress
	the common index, and take $K_n W_n H$ to a path in the
	$\mathcal{B}$-vertex space $\mathcal{B}_{o(f)}$. This path will have some
	number of $\mathcal{A}$-crossing edges, but similar to the previous
	paragraph, this path is homotopic to one in $\mathcal{A}$-normal form
	via a homotopy inside
	$\mathcal{B}_{o(f)}$, so that by Lemma \ref{lem:gos-nf} $K_nW_n H$ is
	homotopic to a path of the form $W_n'H'W'$ with exactly one
	$\mathcal{A}$-crossing edge, and $W_n'$ and $W'$ are nodal.

	Second, suppose the resulting path is not $\mathcal{B}$-normal because there is
	some $i$ such that $K_{i,n_i}W_{i,n_i}H_{i+1}W_{i+1,0}K_{i,0}$ is
	homotopic to a path $W_i'$ that is $\mathcal{B}$-nodal, contained in
	the vertex space of $q\in V(\bar{B})$. In this case
	$H_{i+1}$ must be $\mathcal{A}$-crossing. As before, the path
	$W_i'\subseteq \mathcal{B}_q$ is homotopic to a path in $\mathcal{A}$-normal form
	contained in $\mathcal{B}_q$.

	In both cases, the number of $\mathcal{A}$-crossing edges is
	maintained, so the result is in $\mathcal{A}$-normal form.

	Therefore, a path $\gamma$ is homotopic to a path in simultaneous
	normal form, and can be taken to this normal form by composing the
	following homotopies:
	\begin{enumerate}
		\item take $\gamma$ to $\mathcal{A}$-normal form,
		\item take each $\mathcal{A}$-nodal sub-path to
			$\mathcal{B}$-normal form within the appropriate
			$\mathcal{A}$ vertex space,
		\item erase remaining pairs of $\mathcal{B}$-crossing edges,
			maintaining $\mathcal{A}$-normal form.
	\end{enumerate}

	The homotopy invariance of the number of crossing edge types follows
	immediately from Lemma~\ref{lem:gos-nf}.
\end{proof}

\section{Twisting in graphs of spaces}
\label{sec:tgos}

A Dehn twist on a graph of groups can be realized by an action on based
homotopy classes of paths in a graph of spaces. Let $\Gamma$ be a
graph of groups modeled by the graph of spaces $X$, and $D$ a Dehn twist based
on $\Gamma$. Each crossing edge $H \in X^{(1)}$ lies over some edge
$e\in E(\Gamma)$. For each crossing edge $H$ pick a loop $\gamma_H$ in
$\mathcal{X}_{t(e)}$, contained in the image of $\mathcal{X}_e\times\{1\}$ representing $z_e$ and
based at $t(H)$. The action of $D$ on a crossing edge is the concatenation
	\[ D(H) = H\gamma_H\]
The action is extended to an action on all paths in $X^{(1)}$ by concatenation and $D(v) =
v$ for every nodal path, and to based homotopy classes by taking
one-skeleton representatives. That this action is well-defined and represents the
Dehn twist $D$ faithfully follows from noting that the below diagram of
fundamental groupoids commutes.
\[\xymatrix{
	\pi_1(X,X^{(0)}) \ar[r]^{D}\ar[d] & \pi_1(X,X^{(0)})\ar[d] \\
	\pi_1(\Gamma)\ar[r]_{D} & \pi_1(\Gamma)
	}\]
Also from this diagram we see that if a path $\gamma$ is in normal form, then
so is $D(\gamma)$, with the same crossing edges. 

Extending this to the setting of a simultaneous graph of spaces resolving
$\bar{A}$ and $\bar{B}$, and twists $\tilde{\sigma}$ based on $\bar{A}$ and
$\tilde{\tau}$ based on $\bar{B}$, we see that $\tilde{\sigma}$ preserves
$\mathcal{A}$-normal form (though we can make no comment on the
$\mathcal{B}$-normal form) and a symmetric statement holds for $\tilde{\tau}$.
To understand the behavior of paths in simultaneous normal form we must track
the extent to which $\tilde{\sigma}$ alters the number of $\mathcal{B}$-crossing
edges and vise-versa. This interaction is contained entirely in the graphs of
groups, and applies to all twists based on the graphs.

\begin{definition}\label{def:etg}
	The \emph{edge twist digraph} $\mathcal{ET}(\bar{A},\bar{B})$ of two
	small graphs of groups is a directed graph with vertex set
		\[ V(\mathcal{ET}) = \{(e,\bar{e}), | e\in
		E(\bar{A})\}\cup\{ (f,\bar{f}) | f\in E(\bar{B})\}, \]
	directed edges $((e,\bar{e}),(f,\bar{f}))$
	$e\in E(\bar{A}), f\in E(\bar{B})$ when a generator $\bar{A}_e =
	\langle z_e\rangle$ or its inverse uses $f$ or $\bar{f}$ in cyclically
	reduced normal form with respect to $\bar{B}$, and directed edges
	$((f,\bar{f}),(e,\bar{e}))$ $f\in E(\bar{B}), e\in E(\bar{A})$ when a
	generator $\bar{B}_f = \langle z_f\rangle$ or its inverse uses $e$ or
	$\bar{e}$ in cyclically reduced normal form with respect to $\bar{A}$.
\end{definition}

\begin{example}
	Suppose $\bar{A}$ and $\bar{B}$ are hyperbolic-hyperbolic one-edge
	graphs of groups with edges $a$ and $b$ respectively. Then
	$\mathcal{ET}(\bar{A},\bar{B})$ is the following digraph.
	\begin{center}
		\begin{tikzpicture}[thick,font=\small,every text node
	part/.style={align=center}]
	\node [vertex] (A) at (-1,0) {};
	\node [vertex] (B) at (1,0) {};
	\draw [postaction={decorate},decoration={markings,mark=at
	position 0.5 with {\arrow{>}}}] (A) to[bend left] (B);
	\draw [bend left=30,postaction={decorate},decoration={markings,mark=at
	position 0.5 with {\arrow{>}}}] (B) to[bend left] (A);
	\node [left =2pt of A] {$(a,\bar{a})$};
	\node [right =2pt of B] {$(b,\bar{b})$};
\end{tikzpicture}
 	\end{center}
\end{example}

\begin{example}
	Let $\bar{A}$, $\bar{B}$, and $\bar{C}$ be the graphs of groups from
	Examples \ref{ex:elip-elip} and \ref{ex:elip-hyp}. Their edge twist
	digraphs are the following. Both $\mathcal{ET}(\bar{A},\bar{C})$ and
	$\mathcal{ET}(\bar{B},\bar{C})$ have two verticies and no edges. The
	digraph $\mathcal{ET}(\bar{A},\bar{B})$ has a single directed edge from
	$(a,\bar{a})$ to $(b,\bar{b})$.
\end{example}

This definition is made somewhat cumbersome by the presence of orientation. The
vertex set is the unoriented edges of the two graphs of groups, and the
property of crossing an unoriented edge in normal form is shared by the
generator and its inverse. We encapsulate the resulting awkwardness here, so
that subsequent arguments about paths in simultaneous normal form are clear.

The edge-twist structure controls the growth rate of elements in any group
generated by a twist $\tilde{\sigma}$ on $\bar{A}$ and $\tilde{\tau}$ on $\bar{B}$.

\begin{lemma}\label{lem:et-pg}
	Suppose $\bar{A}$ and $\bar{B}$ are minimal visible small graphs of
	groups with free fundamental group $F_r$ and $\mathcal{ET}(\bar{A},\bar{B})$
	is acyclic. Then for any pair of Dehn twists $\sigma, \tau\in
	\Out(F_r)$ represented by $\tilde{\sigma}$ based on $\bar{A}$ and
	$\tilde{\tau}$ based on $\bar{B}$, every element of $\langle
	\sigma,\tau\rangle \leq \Out(F_r)$ is polynomially growing. Moreover,
	the growth degree is at most the length of the longest directed path in
	$\mathcal{ET}(\bar{A},\bar{B})$.
\end{lemma}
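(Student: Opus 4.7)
The plan is to work in a simultaneous graph of spaces $X$ resolving $\bar A$ and $\bar B$ (Corollary~\ref{cor:top-core}), to track the combinatorial length of a loop under iterated Dehn twisting, and to exploit the acyclic structure of $\mathcal{ET}$ to bound this length polynomially. I would fix a free homotopy class of loop $\gamma \subset X$ in simultaneous normal form; for each vertex $v$ of $\mathcal{ET}$---an unoriented edge of $\bar A$ or $\bar B$---let $n(\gamma,v)$ count the edges of $\gamma$ crossing $v$, a quantity that is a homotopy invariant by Lemma~\ref{lem:sgos-nf}.

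The crucial observation is that each twister $z_e$ of $\tilde\sigma$ lies in the $\bar A$-vertex group $\bar A_{t(e)}$, so every application of $\tilde\sigma^{\pm 1}$ inserts, after each $\mathcal{A}$-crossing over $e$, a loop supported in a single $\mathcal{A}$-vertex space. This preserves $\mathcal{A}$-normal form, leaves $n(\cdot,v)$ unchanged for $v\in E(\bar A)$, and for $v\in E(\bar B)$ introduces at most a bounded number of new $v$-crossings per $\mathcal{A}$-crossing over $e$, nonzero only when $((e,\bar e),(v,\bar v))$ is a directed edge of $\mathcal{ET}$; a symmetric statement holds for $\tilde\tau$. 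Writing $\omega \in \langle\sigma,\tau\rangle$ as a word of length $|w|$ in $\sigma^{\pm 1},\tau^{\pm 1}$, realizing $\omega^n$ as $n|w|$ single generator applications, and noting that simultaneous normal form reduction only decreases crossing counts (Lemma~\ref{lem:sgos-nf} again) yields the recursion
\[
    n(\gamma_{k+1},v) \leq n(\gamma_k,v) + C\sum_{u\to v \text{ in } \mathcal{ET}} n(\gamma_k,u)
\]
with $C$ depending only on the twisters.

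For each $v$ let $\ell(v)$ denote the length of the longest directed path in $\mathcal{ET}$ ending at $v$; by acyclicity every predecessor satisfies $\ell(u)<\ell(v)$, so I would induct on $\ell(v)$ to prove $n(\omega^n\gamma,v) = O(n^{\ell(v)})$. The base case is a source, where no twister uses $v$ and the count equals the constant $n(\gamma,v)$; in the inductive step the recursion contributes an increment of order $k^{\ell(v)-1}$ per step via the inductive hypothesis on predecessors, summing $n|w|$ such increments to $O(n^{\ell(v)})$. Each generator application also lengthens the intervening nodal segments by a bounded amount per modified crossing, so total nodal length is absorbed into the polynomial bound coming from the summed crossing counts; the total simultaneous normal form length of $\omega^n(\gamma)$ is then $O(n^d)$, with $d$ the length of the longest directed path in $\mathcal{ET}$. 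A spanning tree collapse $X \to R_r$ to a wedge of $r$ circles is an equivariantly Lipschitz quotient, transferring the bound to $\ell_{R_r}([[\hat\omega^n(\gamma)]])$ and establishing polynomial growth of degree at most $d$ for every $\omega \in \langle\sigma,\tau\rangle$. The hard part will be justifying the one-sided character of the recursion carefully: after each twister insertion the path must be put back into simultaneous normal form, and this must only decrease---never increase---the crossing count at each particular $v$, which relies on the invariance in Lemma~\ref{lem:sgos-nf} together with the localization of each twister to a single vertex space of its supporting graph of groups.
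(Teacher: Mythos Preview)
Your proposal is correct and follows essentially the same strategy as the paper: work in the simultaneous graph of spaces built from the core, observe that each application of $\tilde\sigma$ or $\tilde\tau$ inserts twister loops whose crossings are governed by the directed edges of $\mathcal{ET}$, and exploit acyclicity via induction on a rank function before transferring the bound to a rose by a Lipschitz collapse. The paper organizes the induction dually---it fixes a single crossing edge $H$ over $e$ and bounds the length of its orbit by inducting on the longest directed path \emph{starting} at $(e,\bar e)$, decomposing the inserted twister into crossings over out-neighbours---whereas you track the vector of crossing counts step by step and induct on the longest path \emph{ending} at each vertex; the two bookkeeping schemes yield the same polynomial bound (and share the same off-by-one between the degree exhibited and the degree asserted in the lemma).
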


\begin{proof}
	Let $X = \hatcore(A,B)\times_{F_r} T$ be the simultaneous graph of spaces
	constructed from the augmented core of the Bass-Serre trees $A$ and $B$
	for $\bar{A}$ and $\bar{B}$, with $\bar{T}$ a wedge of circles,
	equipped with the $\ell_1$ metric. Note that $\tilde{X}$ has an
	equivariant Lipschitz surjection to $T$ given by projection and that
	this descends to a Lipschitz homotopy equivalence on the quotient,
	denoted $\rho: X\to \bar{T}$. Further, if $\gamma$ is a loop in
	$X^{(1)}$ representing a conjugacy class $[g]$ of $\pi_1(\bar{T})$, 
		\[ \ell_T(g) \leq |\rho(\gamma)|_{\bar{T}} \leq Lip(\rho)\cdot
		|\gamma|_X\]
	where $|\cdot|$ is the arclength. Further, for any $w\in \langle
	\sigma,\tau\rangle$, by expressing $w$ as a word in the generators we
	get an action on paths $\tilde{w}$, with the property that $w(g)$ is
	represented by $\tilde{w}(\gamma)$.

	Therefore, it suffices to give a polynomial bound on the growth of
	paths in $X$ under the topological representatives of $\sigma$ and
	$\tau$. Moreover, for any edge path $\gamma$ the growth under the
	action of $\tilde{\sigma}$ and $\tilde{\tau}$ is bounded by the number
	of $\mathcal{A}$-crossing edges of $\gamma$ times the growth of
	$\mathcal{A}$-crossing edges plus the similar quantity for
	$\mathcal{B}$-crossing edges. So it suffices to bound the growth of
	crossing edges. (Note, this is an upper bound, we make no attempt to
	understand cancellation that might happen, as a result these bounds
	could be quite bad.)

	First, as a technical convenience, replace $\bar{A}$ and $\bar{B}$ by
	the isomorphic graphs of groups constructed from $A$ and $B$ using a
	fundamental domain in each that is the image under projection of a fundamental domain
	for $\hatcore(A,B)$, so that the edge groups of
	edges in each of $\bar{A}$ and $\bar{B}$ whose orbits are covered by diagonals of the core are not just conjugate, but
	equal on the nose. This does not change the outer automorphism class of
	the Dehn twists under consideration, nor does it change the edge twist
	graph. 

	Suppose $D$ is a double-crossing edge of $X^{(1)}$ lying over $e\in
	E(A)$ and $f\in E(B)$, so that the edge group $\bar{A}_e = \bar{B}_f =
	\langle z\rangle$ with common generator $z$. The cyclically reduced
	normal form of $z$ with respect to $\bar{A}$ based at
	$t(e)$ is $\iota_e(z)$; and with respect to $\bar{B}$ based at $t(f)$ is $\iota_f(z)$.
	Since both of these normal forms for $z$ contain no edges, the vertices
	$(e,\bar{e})$ and $(f,\bar{f})$ of $\mathcal{ET}(\bar{A},\bar{B})$ have
	no outgoing edges. Moreover, we can choose a loop representing a
	generator $z$ that is nodal and based at $t(D)$, and alter the
	topological representatives of $\tilde{\sigma}$ and $\tilde{\tau}$ so
	that $\tilde{\sigma}(D) = \gamma^{a}$ and $\tilde{\tau}(D) =
	\gamma^{b}$, concatenations of either $\gamma$ or its reverse,
	according to the expression of the twisters of $\tilde{\sigma}$ about
	$e$ and $\tilde{\tau}$ about $f$ in terms of the generator $z$. Thus,
		\[ \tilde{\sigma}^{s_n}\tilde{\tau}^{t_n}\cdots\tilde{\sigma}^s_1\tilde{\tau}^{t_1}(D)
			= D\gamma^{a\sum s_i + b\sum t_i} \]
	which has edge length at most linear in $\sum |s_i|+\sum |t_i|$.

	Suppose $H$ is an $\mathcal{A}$- or $\mathcal{B}$-crossing edge of
	$X^{(1)}$ lying over $(e,\bar{e}) \in V(\mathcal{ET})$. Let $d_e$ be
	the length of the longest directed path in
	$\mathcal{ET}(\bar{A},\bar{B})$ starting at $e$. We will use the
	notation $\poly_d(x)$ to stand for some polynomial of degree $d$ in
	$x$, as we are looking for an upper bound and making no attempt to
	estimate coefficients.

	\begin{claim*}
		For any crossing edge $H$, the length of
		$\tilde{\sigma}^{s_n}\tilde{\tau}^{s_n}\cdots\tilde{\sigma}^{s_1}\tilde{\tau}^{t_1}(H)$
		is at most $\poly_{d_e+1}(\sum |s_i|+\sum |t_i|)$.
	\end{claim*}

	\begin{proof}
		For double-crossing edges, the argument in the previous
		discussion establishes this claim. It remains to establish the
		claim for edges that are crossing but not double-crossing.
		The proof is by induction on $d_e$. As the argument is
		symmetric, we will suppose $H$ is $\mathcal{A}$-crossing, so
		that $e\in E(A)$. 

		\emph{Base Case: $d_e = 0$}. Let $\gamma_H$ be a loop representing a
		generator $z_e$ of $\bar{A}_e$ based at $t(H)$ and in
		simultaneous normal form. Since $(e,\bar{e})$ has no outgoing
		edges in $\mathcal{ET}$, the loop $\gamma_H$ is
		$\mathcal{B}$-nodal. Let $a$ be the power so that
		$z_e^a$ is the $e$ twister of $\tilde{\sigma}$. Use
		$\gamma_e^a$ in the topological representative of
		$\tilde{\sigma}$. Then for any $s$, $\tilde{\sigma}^s(H) =
		H\gamma_e^{as}$ is a $\mathcal{B}$-nodal path, and we have
			\[ \tilde{\sigma}^{s_n}\tilde{\tau}^{t_n}\cdots\tilde{\sigma}^{s_1}\tilde{\tau}^{t_1}(H)
				= H\gamma_H^{a\sum s_i} \]
		which has edge length at most linear in $\sum |s_i| +\sum
		|t_i|$, as required.

		\emph{Inductive Step: $d_e > 0$}. Since $d_e >0$, $(e,\bar{e})$ has neighbors
		$(f_1,\bar{f_1}),\ldots,(f_k,\bar{f_k})$. As before, use a
		simultaneous normal form representative $\gamma_H$ for a
		generator $z_e$ of $\bar{A}_e$ based at $t(H)$, so that
		$\sigma(H) = \gamma_H^a$. Since
		$\gamma_H$ has an $\mathcal{A}$-nodal representative by
		definition, we have in simultaneous normal form
			\[ \gamma_H = W_0K_1\cdots K_mW_m \]
		where $K_i$ lies over either $f_{k_i}$ or $\bar{f}_{k_i}$ by
		the definition of the edge twist graph. Further, for each
		$f_i$, the longest path in $\mathcal{ET}$ based at $f_i$,
		has length at most $d_e-1$. Calculating, we have
		\begin{align*}
			\tilde{\sigma}^{s_n}\tilde{\tau}^{t_n}\cdots\tilde{\sigma}^{s_1}\tilde{\tau}^{t_1}(H)
			&=
			H\gamma_H^{as_n}\tilde{\sigma}^{s_n}\tilde{\tau}^{t_n}(\gamma_H^{as_{n-1}})\\
			&\cdot\tilde{\sigma}^{s_n}\tilde{\tau}^{t_n}\tilde{\sigma}^{s_{n-1}}\tilde{\tau}^{t_{n-1}}(\gamma_H^{as_{n-2}})\\
			&\vdots \\
			&\cdot
			\tilde{\sigma}^{s_n}\tilde{\tau}^{t_n}\cdots\tilde{\tau}^{t_2}(\gamma_H^{as_1}).
		\end{align*}
		By the induction hypothesis, the length of each $K_{v_i}$ under
		a composition of powers of $\tilde{\sigma}$ and $\tilde{\tau}$
		is bounded by a polynomial of degree at most $d_e$. Hence the
		path
		$\tilde{\sigma}^{s_n}\tilde{\tau}^{t_n}\cdots
		\tilde{\sigma}^{s_2}\tilde{\tau}^{t_2}(\gamma^{as_1})$ has length at most
			\[ |as_1|\cdot\poly_{d_e}\left(\sum_{i\geq 2}
			|s_i|+|t_i|\right). \]
		Similarly, we bound the lengths of the other components and
		estimate
		\begin{align*}
			|\tilde{\sigma}^{s_n}\tilde{\tau}^{t_n}\cdots\tilde{\sigma}^{s_1}\tilde{\tau}^{t_1}(H)|
			&\leq
			|H\gamma_H^{as_n}\tilde{\sigma}^{s_n}\tilde{\tau}^{t_n}(\gamma_H^{as_{n-1}})|\\
			&+|\tilde{\sigma}^{s_n}\tilde{\tau}^{t_n}\tilde{\sigma}^{s_{n-1}}\tilde{\tau}^{t_{n-1}}(\gamma_H^{as_{n-2}})|\\
			&\vdots \\
			&+|
			\tilde{\sigma}^{s_n}\tilde{\tau}^{t_n}\cdots\tilde{\tau}^{t_2}(\gamma_H^{as_1})|
			\\
			& \leq \sum_{i=1}^n
			|as_i|\cdot\poly_{d_e}\left(\sum_{j>i}
			|s_j|+|t_j|\right) \\
		\end{align*}
		and this quantity is in turn at most $\poly_{d_e+1}\left(\sum
		|s_i|+|t_i|\right)$. This completes the claim.
	\end{proof}

	Finally, suppose $w =
	\sigma^{s_n}\tau^{s_n}\cdots\sigma^{s_1}\tau^{t_1} \in
	\langle\sigma,\tau\rangle$. For any $g\in F$, let $\gamma$ be a loop in
	simultaneous normal form representing the conjugacy class of $g$ in
	$X^{(1)}$. The length $\ell_T(w^N(g))$ is bounded by the length in $X$
	of $\tilde{w}^N(\gamma)$, which by the claim is at most
	\[ \poly_{d+1}(N\cdot\left(\sum|s_i|+|t_i|\right)) \]
	where $d$ is the length of the longest directed path in $\mathcal{ET}$.
	This is a polynomial of degree $d+1$ in $N$, which completes the lemma.
\end{proof}

An interesting question, which we do not pursue here is whether or not Lemma
\ref{lem:et-pg} is sharp. That is, if $\mathcal{ET}(\bar{A},\bar{B})$ contains
a cycle, is there some pair of twists $\sigma, \tau$ with representatives based
on $\bar{A}$ and $\bar{B}$ respectively so that the group generated contains an
outer automorphism with an exponentially growing stratum? In the setting of
one-edge splittings, Clay and Pettet's result is in this direction: two
one-edge graphs of groups that fill have a directed cycle of length two in
their edge-twist graphs; the group generated contains a fully irreducible
element, which is exponentially growing.

\section{Dehn twists on incompatible graphs generate free groups}
\label{sec:main-theorem}

We are now in a position to give a full converse to Lemma~\ref{lem:dt-com}. The
proof is by two cases, decided by the structure of the edge-twist graph. When
the edge-twist graph contains a cycle, this cycle enables a length function
ping-pong argument that is almost identical to the proof of
Lemma~\ref{lem:dt-free}. When the edge-twist graph is acyclic, the group
generated by the pair of twists is polynomially growing and we analyze its
structure using the Kolchin theorem for $\Out(F_r)$ of Bestvina, Feighn, and
Handel. As the two arguments are significantly different, we present them as
two lemmas.

\begin{lemma}\label{lem:dt-cycle-free}
	Suppose $\sigma$ and $\tau$ are Dehn twists of $F_r$ with efficient
	representatives $\tilde{\sigma}$ and $\tilde{\tau}$ on graphs of groups
	$\bar{A}$ and $\bar{B}$ such that $\mathcal{ET}(\bar{A},\bar{B})$
	contains a cycle. Then, for all $n\geq N = 48r^2-48r+3$, the group
	$\langle\sigma^N,\tau^N\rangle \cong F_2$.
\end{lemma}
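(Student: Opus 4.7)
The plan is a length-function ping-pong argument mirroring the proof of Lemma~\ref{lem:dt-free}, with the cycle in $\mathcal{ET}(\bar{A},\bar{B})$ supplying the hyperbolic-hyperbolic data that was assumed globally in the one-edge setting. Since the directed edges of $\mathcal{ET}$ run only between vertices labeled by $\bar{A}$-edges and vertices labeled by $\bar{B}$-edges, the digraph is bipartite, and the cycle hypothesis yields a simple directed cycle of even length; let $C_A \subseteq E(\bar{A})$ and $C_B \subseteq E(\bar{B})$ be the $\bar{A}$- and $\bar{B}$-edges it visits. For each $e \in C_A$ the element $z_e$ satisfies $\ell_B(z_e) > 0$ and its $B$-axis crosses a translate of the successor of $e$ in the cycle, which lies in $C_B$; the symmetric statement holds for $C_B$.

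For the ping-pong, take
\[ P_\sigma = \{[w] : \ell_A(w) < \ell_B(w) \text{ and the $B$-axis of $w$ crosses a translate of some edge in $C_B$}\}, \]
\[ P_\tau = \{[w] : \ell_B(w) < \ell_A(w) \text{ and the $A$-axis of $w$ crosses a translate of some edge in $C_A$}\}. \]
For any $e \in C_A$ the generator $z_e$ lies in $P_\sigma$, and for any $f \in C_B$ the generator $z_f$ lies in $P_\tau$, so both sets are non-empty. By the ping-pong lemma, it then suffices to establish $\sigma^{\pm n}(P_\tau) \subseteq P_\sigma$ and $\tau^{\pm n}(P_\sigma) \subseteq P_\tau$ for all $|n| \geq N = 48r^2 - 48r + 3$.

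To verify the first inclusion (the second is symmetric), fix a good basis $\Lambda$ for $F_r$ from Lemma~\ref{lem:good-basis}, so that $C(\Lambda, B) \leq 6r(2r-2)$. Given $[w] \in P_\tau$, write $w$ in cyclically reduced transverse Bass-Serre normal form with respect to $\bar{A}$ and $\Lambda$. The hypothesis $\ell_B(w) < \ell_A(w)$ and Lemma~\ref{lem:bclf} together bound the weighted exponent sum $\sum_j |k_j|\,\ell_B(z_{e_{i_j}})$ in terms of $\ell_A(w)$. Since the $A$-axis of $w$ crosses at least one edge $e \in C_A$, applying $\tilde{\sigma}^n$ replaces the exponent at that crossing by $s_e\epsilon n + k_j$; because $\ell_B(z_e) > 0$, the same arithmetic as in Lemma~\ref{lem:dt-free} yields $\ell_B(\sigma^n(w)) > \ell_A(w)$ once $|n| \geq N$, and the inserted power's $B$-axis crosses a translate of the successor of $e$ in $C_B$ that survives bounded cancellation against the adjacent pieces, placing $\sigma^n(w)$ in $P_\sigma$. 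The principal obstacle is multi-edge bookkeeping: one must handle the edges of $\bar{A}$ outside $C_A$, whose generators may be elliptic in $B$ and so contribute nothing to the weighted sums in the bounded-cancellation inequality; the resolution is that the good-basis bound $C(\Lambda, B) \leq 6r(2r-2)$ depends only on the tree $B$, and the single guaranteed $C_A$-crossing supplies growth linear in $|n|$ against a fixed cancellation loss, matching the one-edge arithmetic and yielding the same threshold $N$.
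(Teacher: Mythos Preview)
Your overall plan is right, but the ping-pong as written does not give a uniform $N$. The trouble is the ``fixed cancellation loss'' claim. When you decompose $w$ in transverse Bass--Serre normal form with respect to $\bar{A}$, the bounded-cancellation estimate from Lemma~\ref{lem:bclf} loses a constant for \emph{each} of the $\ell_A(w)$ pieces, so the total loss is $2C(\Lambda,B)\,\ell_A(w)$, not a constant independent of $w$. On the other hand, your guaranteed gain after applying $\tilde{\sigma}^n$ comes only from the crossings of edges in $C_A$, and you only ensure there is at least one. If $[w]\in P_\tau$ has a single crossing of a $C_A$-edge and $\ell_A(w)$ other crossings of edges $e\notin C_A$ with $\ell_B(z_e)=0$, your lower bound for $\ell_B(\sigma^n(w))$ is of order $|n|\cdot\ell_B(z_e)$ for one edge, while you must beat $\ell_A(w)+2C(\Lambda,B)\,\ell_A(w)$; since $\ell_A(w)$ is unbounded over $P_\tau$, no $n$ independent of $w$ suffices. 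This is exactly the issue in the one-edge arithmetic you invoke: there the growth term is $|sn|\,\ell_A(w)$ because \emph{every} crossing contributes, so the ratio $\ell_B(\sigma^n(w))/\ell_A(w)$ is bounded below by a function of $n$ alone.

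The paper repairs this with one extra move you are missing: collapse all edges of $\bar{A}$ not on the cycle to obtain a quotient graph of groups $\bar{A}'$ (and similarly $\bar{B}'$), and run the entire ping-pong with the length functions $\ell_{A'},\ell_{B'}$ of the collapsed trees. In $\bar{A}'$ every edge is a cycle edge, so $\alpha=\min_i\ell_{B'}(a_{u_i})>0$ and every one of the $\ell_{A'}(w)$ crossings contributes at least $\alpha|s n|$ to the growth. Now the loss $2C(\Lambda,B')\,\ell_{A'}(w)$ and the gain $|sn|\,\alpha\,\ell_{A'}(w)$ both scale with $\ell_{A'}(w)$, and the one-edge arithmetic of Lemma~\ref{lem:dt-free} goes through verbatim to give the threshold $N=48r^2-48r+3$. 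Your extra axis-crossing condition in the definition of $P_\sigma,P_\tau$ then becomes unnecessary, and you also avoid having to argue that a specific $C_B$-crossing survives cancellation in the image, which was another soft spot in your sketch.
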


\begin{proof}
	Let $(u_1,\bar{u_1}),\ldots,(u_c,\bar{u}_c)$ and
	$(v_1,\bar{v}_1),\ldots,(v_c,\bar{v}_c)$ be the vertices of a primitive
	cycle in $\mathcal{ET}$, with $u_i\in E(\bar{A})$ and $v_i\in
	E(\bar{B})$. (It is psychologically unfortunate to use $u$ and $v$ for
	edges, but this usage is only for this proof.) The index $c$ is the same for both sets as $\mathcal{ET}$
	is bipartite, and no vertex $(u_i,\bar{u}_i)$ or $(v_i,\bar{v}_i)$ is
	repeated. For each edge $u_i$ fix a generator $\langle a_{u_i}\rangle =
	\bar{A}_{u_i}$ and $s_{u_i}\neq 0$ so that the twister of $\tilde{\sigma}$
	about $u_i$ is $z_{u_i} = a_{u_i}^{s_{u_i}}$, and similarly fix $\langle
	b_{v_i}\rangle = \bar{B}_{v_i}$ and $t_i\neq 0$. (The $s_u$ and $t_v$ are
	nonzero as both $\tilde{\sigma}$ and $\tilde{\tau}$ twist on every edge
	of their respective graphs.) Let $\bar{A}'$ and $\bar{B}'$ be the
	quotient graphs of groups obtained by collapsing
	$E(\bar{A})\setminus\{u_i,\bar{u_i}\}$ and
	$E(\bar{B})\setminus\{v_i,\bar{v}_i\}$ in $\bar{A}$ and $\bar{B}$
	respectively.

	We will again use conjugacy class ping-pong. Define a set $P$ by the
	partition $P= P_\sigma \sqcup P_\tau$ where
	\begin{align*}
		P_\sigma &= \{ [w]\in P | \ell_{A'}(w) < \ell_{B'}(w)
		\}\\
		P_\tau &= \{ [w]\in P | \ell_{B'}(w) < \ell_{A'}(w) \}.
	\end{align*}
	This partition is
	non-trivial, the edge-group generators $a_u \in P_\tau$ and $b_v\in P_\sigma$.

	Once more we will find an $N$ so that for all $n\geq N$, $\sigma^{\pm
	n}(P_\tau) \subseteq P_\sigma$ and $\tau^{\pm n}(P_\sigma) \subseteq
	P_\tau$, to conclude, by the ping-pong lemma, $\langle
	\sigma^n,\tau^n\rangle \cong F_2$. The argument will be symmetric, and
	almost identical to that of Lemma \ref{lem:dt-free}.

	Suppose $[w]\in P_\tau$, so that $0 < \ell_{A'}(w)$. Fix a cyclically
	reduced representative of $w$ in transverse Bass-Serre normal form with respect to
	a fixed basis $\Lambda$ and $\bar{A}'$,
		\[ w = e_1a_{e_1}^{k_1}w_1e_2a_{e_2}^{k_2}w_2\cdots
		e_\ell a_{e_\ell}^{k_\ell}w_\ell \]
	where we are suppressing the different edge morphisms, using $\ell =
	\ell_{A'}(w)$ for legibility,
	$e_i\in\{u_i,\bar{u}_i\}$, and each $w_i$ is in the right transversal
	of the image of $a_{e_i}$ in the vertex group involved.  Let $C$ be the
	bounded cancellation constant for the fixed basis of $F_r$ basis into
	$B'$. With respect to this basis, after an appropriate
	conjugation we have the cyclically reduced conjugacy class
	representative $w'$ satisfying
		\[ |w'| =
		|a_{e_1}^{k_1'}|+\cdots+|w_{\ell-1}'|+|a_{e_\ell}^{k_\ell'}|+|w_\ell'| \]
	where $w_i'$ is the reduced word for the group element
	obtained from the arrow $a_{e_i}^{\pm 1}w_ie_{i+1}a_{e_{i+1}}^{\pm 1}$
	after collapsing a maximal tree, and $k_i'$ differs from $k_i$ by a
	fixed amount depending only on the edges, as each
	$w_i'$ might might disturb a fixed number of adjacent copies of
	conjugates of $a_i$ depending on the particular spelling (This
	decomposition of $w'$ as a reduced word with respect to $\Lambda$
	follows from normal form as in the proof of Lemma \ref{lem:dt-free}.)
	Let $\alpha = \min_i \{\ell_{B'}(a_{u_i})\}$. Since each $(u_i,\bar{u}_i)$ is
	joined to some $(v_i,\bar{v}_i)$ by an edge in $\mathcal{ET}$ as they
	are all vertices of a cycle, $\alpha > 0$. We have, by bounded
	cancellation,
	\begin{align*} 
		\ell_{A'}(w) > \ell_{B'} &\geq \sum_{i=1}^\ell
		|k_i|\ell_{B'}(a_{e_i}) - 2C(\Lambda,B')\ell_{A'}(w) \\
		&\geq \left(\sum_{i=1}^p |k_i'|\right)\alpha - 2C(\Lambda,B')\ell_{A'}(w).
	\end{align*}
	We conclude
	\begin{equation}\label{eqn:dt-freer}
		\sum |k_i'| <
		\left(\frac{1+2C(\Lambda,B')}{\alpha}\right)\ell_{A'}(w).
		\tag{$\dag$}
	\end{equation}

	Calculating with the induced action of $\tilde{\sigma}$ on arrows in
	$\pi_1(\bar{A}')$ and
	abusing notation to also call this action $\tilde{\sigma}$, we have
		\[ \tilde{\sigma}^n(w) =
		e_1a_{e_1}^{s_{e_1}n}a_{e_1}^{k_1}\tilde{\sigma}^n(w_1)e_2a_{e_2}^{s_{e_2}n}a_{e_2}^{k_2}\cdots
		e_\ell
	a_{e_\ell}^{s_{e_\ell}n}a_{e_\ell}^{k_\ell}\tilde{\sigma}^n(w_\ell). \]
	The possibility that
	$\tilde{\sigma}^n(w_i)$ is of the form $a_{e_i}^{\epsilon
	n}x_ia_{e_{i+1}}^{\delta n}$ is ruled out by the no positive bonding condition of
	the efficient representative: $\epsilon n$ and $s_{e_i}n$ must have
	the same sign, and also $\delta n$ and $s_{e_{i+1}}n$. So, reducing and
	applying bounded cancellation in the same fashion, we have, with $s =
	\min_i \{ |s_i|\}$
	\begin{align*}
		\ell_{B'}(\sigma^n(w)) &\geq
		\sum_{i=1}^\ell|s_{e_i}n+k_i'|\ell_{B'}(a_{e_i}) -
		2C(\Lambda,B')\ell_{A'}(w)
		\\
		&\geq \left(|sn|\ell_{A'}(w)-\sum_{i=1}^p |k_i'|\right)\alpha -
		2C(\Lambda,B')\ell_{A'}(w)\\
		&\geq
		\left(|sn|-\frac{1+2C(\Lambda,B')}{\alpha}\right)\alpha\ell_{A'}(w)-2C(\Lambda,B')\ell_{A'}(w)
	\end{align*}
	with the last step following from Equation \ref{eqn:dt-freer}. Thus we have
	\[\frac{\ell_{B'}(\sigma^n(w))}{\ell_{A'}(\sigma^n(w))} =
		\frac{\ell_{B'}(\sigma^n(w))}{\ell_{A'}(w)} \geq
	\left(|sn|-\frac{1+2C(\Lambda,B')}{\alpha}\right)\alpha-2C(\Lambda,B').\]
	Therefore, to ensure $\sigma^n(w)\in P_\sigma$ we require
	\[ \left(|sn|-\frac{1+2C(\Lambda,B')}{\alpha}\right)\alpha -
		2C(\Lambda,B') >
		1 \]
	that is,
	\[ |n| > \frac{2+4C(\Lambda,B')}{s\alpha}. \]

	As before, after choosing bases using Lemma~\ref{lem:good-basis} for
	both this calculation and a similar calculation involving $\tau$, we
	conclude that for all $n\geq N= 48r^2-48r+3$ both $\sigma^{\pm
	n}(P_\tau)\subseteq P_\sigma$ and $\tau^{\pm n}(P_\sigma)\subseteq
	P_\tau$. Therefore the group $\langle
	\sigma^n,\tau^n\rangle \cong F_2$ by the ping-pong lemma.
\end{proof}

The presence of a cycle in $\mathcal{ET}(\bar{A},\bar{B})$ is essential in the
above proof; it guarantees there is some subset of twisters and edges where the
growth of one restricted length function is linear in the value of the other
restricted length function. Without a cycle, this kind of uniform control is
unavailable, as illustrated by Examples~\ref{ex:elip-elip} and
\ref{ex:elip-hyp}. Fortunately, this is the exact case where the generated group
is polynomially growing and the Kolchin theorem can be applied. Using the
simultaneous upper triangular representatives a different form of ping-pong can
be effected.

First we require a lemma relating the core of two efficient twists and the
structure of their simultaneous upper triangular representatives.
The contrapositive of this lemma will be used to find paths suitable for
ping-pong, after applying the Kolchin theorem.

\begin{lemma}\label{lem:upg-compat}
	Suppose $\sigma$ and $\tau$ are Dehn twist outer automorphisms with
	upper-triangular relative train-track representatives $\hat{\sigma}$
	and $\hat{\tau}$ with respect to a filtered graph $\emptyset =
	\Gamma_0\subsetneq\Gamma_1\subsetneq\cdots\subsetneq \Gamma_k =
	\Gamma$, and efficient representatives $\tilde{\sigma}$ and
	$\tilde{\tau}$ on graphs of groups $\bar{A}$ and $\bar{B}$ covered by
	$A$ and $B$ respectively. If 
	\begin{enumerate}
		\item Every suffix of $\hat{\sigma}$ is $\hat{\tau}$-Nielsen,
		\item Every suffix of $\hat{\tau}$ is $\hat{\sigma}$-Nielsen,
		\item For every edge $E_i \in \Gamma_i\setminus\Gamma_{i-1}$ if
			$E_i$ is a linear edge of both $\hat{\sigma}$ and $\hat{\tau}$ the associated primitive Nielsen
	paths are equal (up to orientation), 
	\end{enumerate}
	then $i(A,B) = 0$.
\end{lemma}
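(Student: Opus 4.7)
The plan is to construct a single graph of groups $\bar{T}$ on which both $\sigma$ and $\tau$ admit Dehn twist representatives; once this is in place, collapsing the edges of $\bar{T}$ on which the $\sigma$-twister is trivial (and removing invisible vertices) produces an efficient Dehn twist representative of $\sigma$, so by Corollary~\ref{cor:et-uniq} this graph is isomorphic to $\bar{A}$, exhibiting the Bass-Serre tree $T$ of $\bar{T}$ as an equivariant edge-collapse onto $A$. Symmetrically $T$ collapses onto $B$, so $T$ is a common refinement and Theorem~\ref{thm:core-refine} yields $i(A,B)=0$.

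To build $\bar{T}$ I would run the three-step procedure of Lemma~\ref{lem:efficient-tt} on the common filtered graph $\Gamma$ using $\hat{\sigma}$ and $\hat{\tau}$ in parallel: in Step~1 fold conjugates triggered by either a $\hat{\sigma}$-suffix or a $\hat{\tau}$-suffix of the form $\gamma_i[\eta_j^k]\bar{\gamma}_i$; in Step~2 fold linear families; in Step~3 collapse every edge whose $\hat{\sigma}$- and $\hat{\tau}$-suffixes are both trivial and, for each surviving edge, pull its primitive Nielsen data into the edge stabilizer. The three hypotheses are what make the parallel construction coherent. Hypothesis~3 intervenes in Step~2: if $E_i$ is a linear edge for both $\hat{\sigma}$ and $\hat{\tau}$, then the associated primitive Nielsen paths coincide, so there is a single common linear family to fold once. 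Hypotheses~1 and~2 intervene in Step~1 and throughout: a fold of the terminal half of $E_i$ along $\bar{\gamma}_i$ triggered by the $\hat{\sigma}$-data modifies the $\hat{\tau}$-suffix of the surviving edge $E_i'$ to the tightening of $\bar{\gamma}_i\cdot u_i^{\tau}\cdot\hat{\tau}(\gamma_i)$, which remains $\hat{\tau}$-Nielsen because $u_i^{\tau}$ is Nielsen by hypothesis and $\gamma_i$ lies strictly lower in the filtration. Step~3 then assigns each surviving edge two cyclic twister elements $z_{E_i''}^{\sigma}$ and $z_{E_i''}^{\tau}$, giving simultaneous Dehn twist representatives of $\sigma$ and $\tau$ on $\bar{T}$.

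The main obstacle is the parallel bookkeeping in Steps~1 and~2: one must show that a fold prescribed by the data of one automorphism does not destroy the upper-triangular Nielsen-suffix structure of the other, and that no positive bonding of twisters arises for either system after the Step~2 folds are combined. This reduces to a case analysis on the pair $(u_i^{\sigma},u_i^{\tau})$ of suffixes on each edge, and in every case the three hypotheses supply exactly the invariance needed to push the induction up the filtration and to guarantee that the collapses recovering $\bar{A}$ and $\bar{B}$ from $\bar{T}$ are pure edge collapses rather than nontrivial folds.
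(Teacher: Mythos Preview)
Your proposal is correct and follows essentially the same approach as the paper's proof: run the folding procedure of Lemma~\ref{lem:efficient-tt} jointly for $\hat{\sigma}$ and $\hat{\tau}$ (using joint linear families, which the hypotheses guarantee are well-defined), then build a common refining tree by collapsing only those edges fixed by both maps and pulling the shared primitive Nielsen path over each surviving edge; the collapse maps onto $A$ and $B$ together with Theorem~\ref{thm:core-refine} finish the argument. The paper is similarly brief about the parallel bookkeeping you flag as the main obstacle, simply asserting that ``the hypotheses ensure that this can be done in a compatible fashion.''
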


\begin{proof}
	The construction of efficient representatives in
	Lemma~\ref{lem:efficient-tt} from a relative train-track involves first
	folding conjugates, then
	a series of folding edges in linear families, and finally a series of
	graph of groups Stallings folds; it follows from Cohen and Lustig's
	parabolic orbits theorem that the simplicial structure of the resulting
	tree is unique (Theorem~\ref{thm:parabolic-orbits} and Corollary~\ref{cor:et-uniq}). We carry out the same construction, using both
	$\hat{\sigma}$-linear edges and $\hat{\tau}$-linear edges. A
	\emph{joint linear family} is a collection of single edges $\{E_i\}$
	which have either $\hat{\sigma}$ or $\hat{\tau}$ suffixes that are a power of a
	fixed primitive Nielsen path $\gamma$. By hypothesis, if two edges
	$E_i$ and $E_j$ are in the same linear family for one of the maps, then
	they are in the same joint linear family. As in the construction of
	efficient representatives, we first fold conjugates and then linear
	families; the hypotheses ensure that this can be done in a compatible
	fashion. The resulting folded graph and folded representatives, $\hat{\sigma}',\hat{\tau}':\Gamma'\to\Gamma'$ are
	still upper triangular, represent $\sigma$
	and $\tau$ respectively, and have the property that every linear family
	contains one edge.

	We now construct a tree $C$ that resolves the trees $A$ and $B$. First,
	recall that the efficient representative of $\hat{\sigma}'$ on a tree
	$A$ can be constructed from $\Gamma'$ as follows. Start
	with $A_0$ obtained from the universal cover of $\Gamma'$ by collapsing
	all $\hat{\sigma}'$ fixed edges of $\Gamma'$. We then work up the
	remaining orbits of edges of $A_0$ by the filtration of $\Gamma'$. If
	$\hat{\sigma}'(E_i) = E_i$ then set $A_i = A_{i-1}$, otherwise
	$\hat{\sigma}'(E_i) = E_iu_i$, and each lift of $u_i$ by construction represents an
	element in the vertex group based at a lift of $t(E_i)$; the tree $A_i$
	is obtained from $A_{i-1}$ by folding the associated
	primitive Nielsen path $\gamma_i$ over $E_i$ (the details are
	in Lemma~\ref{lem:efficient-tt}), and the result $A_k$ is $A$. To
	construct the resolving tree, we start with $C_0$, obtained from the
	universal cover of $\Gamma'$ by collapsing all edges that are fixed by
	both $\hat{\sigma}'$ and $\hat{\tau}'$. Then, working up the hierarchy
	of $\Gamma$, if $E_i$ is both $\hat{\sigma}'$ and $\hat{\tau}'$ fixed,
	set $C_i = C_{i-1}$, otherwise $\hat{\sigma}'(E_i) = E_i\gamma_i^s$ and
	$\hat{\tau}'(E_i) = E_i\gamma_i^t$ for a primitive Nielsen path
	$\gamma_i$ (allowing the possibility $s$ or $t$ is zero); in this case
	by construction lifts of $\gamma_i$ represent elements in the vertex
	stabilizers of lifts of $t(E_i)$, so we obtain $C_i$ from $C_{i-1}$ by
	pulling $\gamma_i$ over $E_i$. The desired resolving tree is $C = C_k$. It is readily apparent from this
	construction that $C$ maps to $A$ and $B$ by collapse maps: collapse
	any remaining $\sigma$ fixed edges of $C$ to obtain $A$, and any
	remaining $\tau$ fixed edges of $C$ to obtain $B$. 

	By Theorem~\ref{thm:core-refine}, since $A$ and $B$ have a common
	refinement, the core is a tree and therefore contains no rectangles,
	whence $i(A,B) = 0$.
\end{proof}

With the relationship between the core and upper triangular representatives
understood, we complete the remaining parts of the proof of the main theorem.

\begin{lemma}\label{lem:dt-pg-free}
	Suppose $\tilde{\sigma},\tilde{\tau}$ are efficient Dehn twists with
	trivial image in $GL(r, \mathbb{Z}/3\mathbb{Z})$ based on $\bar{A}$ and
	$\bar{B}$ respectively. If $\mathcal{ET}(\bar{A},\bar{B})$ is acyclic
	and $i(A,B) > 0$, then $\langle \sigma^3,\tau^3\rangle \cong F_2$.
\end{lemma}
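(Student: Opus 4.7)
The plan is to combine the unipotent structure theory for $\Out(F_r)$ with the negative structural information supplied by the contrapositive of Lemma~\ref{lem:upg-compat}. First I would observe that $\langle\sigma,\tau\rangle$ consists entirely of unipotent polynomially growing outer automorphisms: the triviality of the image in $GL(r,\mathbb{Z}/3\mathbb{Z})$ is preserved under products, so every element of the subgroup is unipotent by the cited result of Bestvina--Feighn--Handel; and the acyclicity of $\mathcal{ET}(\bar{A},\bar{B})$ forces every element of $\langle\sigma,\tau\rangle$ to grow polynomially via Lemma~\ref{lem:et-pg}. Since the subgroup is finitely generated and consists of unipotent polynomially growing elements, the Kolchin theorem (Theorem~\ref{thm:kolchin}) yields a single filtered graph $\Gamma$ together with simultaneous upper triangular representatives $\hat{\sigma},\hat{\tau}:\Gamma\to\Gamma$ of $\sigma$ and $\tau$.

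Next I would apply the contrapositive of Lemma~\ref{lem:upg-compat}. Since $i(A,B)>0$ by hypothesis, at least one of the following holds: (a) some $\hat{\sigma}$-suffix $u_i^\sigma$ is not $\hat{\tau}$-Nielsen, (b) some $\hat{\tau}$-suffix $u_i^\tau$ is not $\hat{\sigma}$-Nielsen, or (c) some edge $E_i$ is linear for both $\hat{\sigma}$ and $\hat{\tau}$ with distinct associated primitive Nielsen paths. By choosing $E_i$ of minimal height with one of these defects, I can localize the obstruction to a single filtration level.

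The heart of the argument is then a combinatorial ping-pong in the spirit of Examples~\ref{ex:elip-elip} and \ref{ex:elip-hyp}. The ping-pong set $P$ would be a suitable family of based paths in $\Gamma$ (or conjugacy classes of loops) whose $\hat{\sigma}$- and $\hat{\tau}$-canonical decompositions pass through $E_i$. The partition $P=P_\sigma\sqcup P_\tau$ is defined by reading the tail pattern adjacent to (lifts of) $E_i$: elements of $P_\sigma$ are those whose decomposition exhibits at least two consecutive copies of $u_i^\sigma$, while elements of $P_\tau$ exhibit the analogous $\hat\tau$-witness. In case (a), a high power $\hat{\sigma}^n$ applied to an element of $P_\tau$ concatenates $n$ copies of $u_i^\sigma$ onto $E_i$, placing it in $P_\sigma$; conversely, applying $\hat{\tau}^n$ to an element of $P_\sigma$ tightens away the $u_i^\sigma$ block (since $u_i^\sigma$ is not $\hat{\tau}$-Nielsen, its $\hat{\tau}$-iterates fail to preserve the block structure) and produces a word in $P_\tau$. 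Cases (b) and (c) are handled symmetrically, with case (c) distinguishing loops by which of the two primitive Nielsen paths appears in the canonical decomposition. The cube is enough because in each case, as in the motivating examples, the distinguishing suffix pattern needs to dominate at most one inherited copy of the competing pattern, and a possible factor of two from orientation/sign ambiguity absorbs the rest.

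The main obstacle will be step (c): producing a single clean ping-pong partition that treats all three failure modes in Lemma~\ref{lem:upg-compat} uniformly, and verifying that the invariant defining the partition is genuinely flipped by $\sigma^{\pm 3}$ and $\tau^{\pm 3}$. Because both $\sigma$ and $\tau$ are polynomially growing, one cannot invoke exponential length-function ping-pong as in Lemmas~\ref{lem:dt-free} and~\ref{lem:dt-cycle-free}; the separation must be read off the combinatorial pattern of suffixes in the $\hat\sigma$- and $\hat\tau$-canonical decompositions. In particular, one must check that iterating one representative cannot accidentally create spurious instances of the other's distinguishing suffix, which is precisely where the hypothesis that $\mathcal{ET}(\bar{A},\bar{B})$ is acyclic (forbidding the kind of positive reinforcement that occurs in the hyperbolic--hyperbolic case) is used.
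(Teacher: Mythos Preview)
Your outline is essentially the paper's proof: reduce to a common upper-triangular model via Lemma~\ref{lem:et-pg}, unipotence, and Theorem~\ref{thm:kolchin}; then invoke the contrapositive of Lemma~\ref{lem:upg-compat} and run a combinatorial ping-pong on paths of the form $E_iw$ at a minimal-height bad edge. The paper collapses your cases (a) and (b) into a single Case~1 by symmetry, and for that case the partition is exactly the ``tail pattern'' you describe, made precise via the overlap length $\theta([E_iu_i^{\pm 3}],p)$ against $\theta([E_iu_i^{\pm 3}],[E_iu_i^{\pm 2}])$; the cube is what guarantees that after tightening one still sees the initial segment $E_i\gamma_2$.

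The one place you diverge from the paper is your case (c). You flag it as the ``main obstacle'' and try to fold it into the ping-pong framework, but in fact it is the easy case and needs no ping-pong at all. When both suffixes $u,v$ of a common edge $E$ are Nielsen for \emph{both} maps and their primitive roots generate distinct cyclic subgroups of $\pi_1(\Gamma,t(E))$, then $\langle u_\ast,v_\ast\rangle$ is free of rank two, and the assignment $\omega\mapsto w_\ast$ where $[\omega(E)]=Ew$ is a homomorphism from $\langle\hat\sigma,\hat\tau\rangle$ onto $\langle u_\ast,v_\ast\rangle$ (indeed an isomorphism), exactly as in Example~\ref{ex:elip-elip}. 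So there is no need for a uniform partition covering all three failure modes; the genuine work is entirely in your cases (a)/(b), and the acyclicity of $\mathcal{ET}(\bar A,\bar B)$ is used only indirectly there, through Lemma~\ref{lem:et-pg} to get access to the Kolchin graph in the first place.
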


For the proof we require some notation. For two paths
$\gamma,\delta\subseteq \Gamma$ with the same initial point, the \emph{overlap
length} is defined by $\theta(\gamma,\delta) =
\frac{1}{2}(\len_\Gamma([\gamma])+\len_\Gamma([\delta]) -
\len([\bar{\gamma}\delta]))$, where we use the metric on $\Gamma$ induced by
assigning each edge length one. We will often understand the overlap length by
calculating the \emph{common initial segment} of two tight paths, this is the
connected component of the intersection of lifts of $\gamma$ and $\delta$ based
at a common point. The length of this
segment is equal to the overlap length.

\begin{proof}
	By Lemma~\ref{lem:et-pg}, the group $\langle \sigma,\tau\rangle$ is a
	polynomially growing subgroup of $\Out(F_r)$. By Bestvina, Feighn, and Handel's criterion for
	unipotence~\cite{bfh-ii}*{Proposition 3.5}, the group $\langle
	\sigma,\tau\rangle$ is a unipotent polynomially growing subgroup.
	Therefore, by the Kolchin theorem
	for $\Out(F_r)$ (see Theorem~\ref{thm:kolchin}) there is a filtered graph
	$\emptyset = \Gamma_0\subsetneq \Gamma_1\subsetneq \cdots\Gamma_k =
	\Gamma$ with each step in the filtration a single edge, so that
	$\langle \sigma,\tau\rangle$ is realized as a group of upper-triangular
	homotopy equivalences of $\Gamma$ with respect to the filtration.
	Let $\hat{\sigma}$ and $\hat{\tau}$ be the realizations of the
	generators. Since $\sigma$ and $\tau$ are UPG, every $\hat{\sigma}$-periodic
	Nielsen path is $\hat{\sigma}$-Nielsen and every $\hat{\tau}$-periodic
	Nielsen path is $\hat{\tau}$-Nielsen.

	Since $i(A,B) > 0$, $\mathcal{C}(A,B)$ contains a rectangle, the
	contrapositive of Lemma~\ref{lem:upg-compat} implies that either (up to
	relabeling) $\hat{\sigma}$ has a linear edge $E_i$ with suffix $u_i$
	that grows linearly under $\hat{\tau}$ (as in Example~\ref{ex:elip-hyp}
	where, using the upper triangular representatives on the rose with
	edges $\bar{a}, \bar{b}, \bar{c}$, $\hat{\sigma}(\bar{a}) =
	\bar{a}\bar{b}$, and $\bar{b}$ is $\hat{\tau}$-linear); or there is an
	edge $E_i$ so that the $\hat{\sigma}$ and $\hat{\tau}$ suffixes are
	powers of primitive Nielsen paths which generate non-equal cyclic
	subgroups and both suffixes are Nielsen for both automorphisms (as in
	Example~\ref{ex:elip-elip} where, again using the representatives on
	the rose with edges $\bar{a}, \bar{b}, \bar{c}$, the $\hat{\sigma}$ and
	$\hat{\rho}$ suffixes of
	$\bar{a}$ are respectively $\bar{b}$ and $\bar{c}$). This gives two cases. In each
	case the proof generalizes the analysis of the appropriate guiding
	example.

	\emph{Case 1.} Let $E_i$ be the lowest edge in the filtration such that
	its suffix under one automorphism grows linearly under the other, 
	and without loss of generality suppose that the $\hat{\sigma}$ suffix
	$u_i$
	grows linearly under $\hat{\tau}$. We will use as a ping-pong set
	\[ P = \{ [\omega(E_i)] | \omega\in
	\langle\hat{\sigma},\hat{\tau}\rangle \} \]
	the orbits of (the based homotopy class of) $E_i$ under tightening after applying elements of the
	group generated by $\hat{\sigma}$ and $\hat{\tau}$. Since a tight path
	is a unique representative of a based homotopy class the proof will
	focus on the tight representatives and the homotopy class will be suppressed.
	All of these classes have tight representatives
	of the form $E_iw$ with $w\subseteq \Gamma_{i-1}$ a tight path based at
	$t(E_i)$, since the group
	is upper triangular with respect to this filtration. Let
	\[ P_\sigma = \{ p\in P | \theta([E_iu_i^3],p) \geq
	\theta([E_iu_i^3],[E_iu_i^2])\mbox{ or } \theta([E_i\bar{u}_i^3],p) \geq
	\theta([E_i\bar{u}_i^3],[E_i\bar{u}_i^2]) \} \]
	and $P_\tau = P\setminus P_\sigma$ be a partition of $P$. It is clear
	that $P$ and $P_\sigma$ are non-empty, and we will show in the course
	of the proof that $P_\tau$ is non-empty. Let
	$\gamma_k$ be the common initial segment of $[u_i^k]$ and $[u_i^{k+1}]$, and
	$\gamma_{-k}$ the common initial segment of $[\bar{u}_i^k]$ and
	$[\bar{u}_i^{k+1}]$. Note that $[\bar{u}_i^k\gamma_1] = \gamma_{-(k-1)}$,
	the paths $\gamma_j$ are an increasing sequence of paths, and that
	$[\hat{\sigma}(\gamma_j)] = \gamma_ju'$ 
	where $u'$ is the $\hat{\sigma}$-Nielsen path associated to an
	exceptional $\hat{\sigma}-$Nielsen subpath of the primitive
	$\hat{\sigma}-$Nielsen path associated to $u_i$ if one exists.

	We claim $\hat{\sigma}^{\pm 3N}(P_\tau) \subseteq P_\sigma$ for
	$N\neq 0$. The argument will be symmetric for negative powers, so
	suppose $N > 0$. Consider $E_iw\in P_\tau$; we calculate
	\[ [\hat{\sigma}^{3N}(E_iw)] = E_i[[u_i^{3N}]\hat{\sigma}^{3N}(w)]. \]
	We must show that $E_i\gamma_2$
	is the initial part of the path $[\hat{\sigma}^{3N}(E_iw)]$. To
	establish this it suffices to show that $[\hat{\sigma}^{3N}(w)]$ does
	not start with $\gamma_{-k}$ for some $k > 3N-1$. For a contradiction
	suppose $[\hat{\sigma}^{3N}(w)] = \gamma_{-k}w'$ for some $k> 3N-1$.
	Consider the $\hat{\sigma}$-canonical decomposition of $\gamma_{-k}w'$.
	Either this agrees with the $\hat{\sigma}$-canonical decomposition of
	$\gamma_{-k}$, or the last edge of $\gamma_{-k}$ participates in a
	maximal exceptional subpath of $w'$, so that the decomposition of
	$\gamma_{-k}w'$ is obtained from $\gamma_{-(k-1)}$ and some $w''$. In
	either case, since every edge of $w$ is lower than the linear family
	associated to $u_i$, $[\hat{\sigma}^{-3N}(w'')]$ does not overlap
	$[u_i^k]$ in $\gamma_k$, and we have
	\[ w = [\hat{\sigma}^{-3N}(\gamma_{-(k-1)}w'')] =
	\gamma_{-(k-1)}[\hat{\sigma}^{-3N}(w'')].\]
	Since $k > 3N-1$, this implies $E_iw \in P_\sigma$, but we supposed
	$E_iw\notin P_\sigma$. Therefore, $E_i[u_i^{3N}\hat\sigma^{3N}(w)]$ has
	$E_i\gamma_2$ as an initial segment, so that
	$\hat{\sigma}^{3N}(E_iw)\in P_\sigma$. The argument for negative
	powers is symmetric.

	Next we claim $\hat{\tau}^{\pm N}(P_\sigma)\subseteq P_\tau$ for
	$N\neq 0$. Let $v_i$ be the
	$\hat{\tau}$ suffix of $E_i$ (possibly trivial). Since $u_i$ grows
	linearly under $\tau$, $\gamma_1$ must contain a $\hat{\tau}$-linear
	edge or $\hat{\tau}$-linear
	exceptional path in its $\hat{\tau}$ decomposition. Neither $v_i$ nor
	$\bar{v}_i$, which are $\hat{\tau}$-Nielsen, can contain a
	$\hat{\tau}$-linear component in their $\hat{\tau}$-canonical
	decomposition as
	$v_i$ is a $\hat{\tau}$ suffix. A similar statement holds for
	$\gamma_{-1}$. Thus $v_i$ and $\bar{v_i}$ do not have $\gamma_2$ or
	$\gamma_{-2}$ as an initial segment. Consider the highest $\hat{\tau}$-linear
	edge of $\gamma_1$; since $\hat{\tau}$ is upper-triangular 
	this edge cannot be canceled when tightening $\hat{\tau}^{N}(\gamma_1)$, 
	so $[\hat{\tau}^{\pm N}(\gamma_2)]$ has at most
	$\gamma_1$ in common with $\gamma_2$ (and similarly at most
	$\gamma_{-1}$ in common with $\gamma_{-2}$). Finally, suppose
	$E_i\gamma_2w \in P_\sigma$ is a tight representative. By the minimality in the choice of $E_i$,
	the highest $\hat{\tau}$-linear edge of $w$ is at most the same height as
	that in $\gamma_2$, so the highest $\hat{\tau}$-linear edges of
	$\gamma_2w$ do not cancel in the tightening of
	$\hat{\tau}^{\pm N}(\gamma_2)\hat{\tau}^{\pm N}(w)$. Putting this all
	together, the result $[\hat{\tau}^{\pm N}(E_i\gamma_2 w)]$ has at most
	$E_i\gamma_1$ in common with $E_i[u_i^3]$. Applying similar reasoning to
	$E_i\gamma_{-2}w'$, we conclude $\hat{\tau}^{\pm
	N}(P_\sigma)\subseteq P_\tau$ (this shows in particular that $P_\tau$
	is non-empty). So by the ping-pong lemma $\langle
	\sigma^3,\tau^3\rangle \leq \langle \sigma^3,\tau\rangle \cong
	F_2$ as required.

	\emph{Case 2.} Suppose no $\hat{\sigma}$-suffix is
	$\hat{\tau}$-growing and vise-versa, and
	that there is an edge $E$ such that $\hat{\sigma}(E) = Eu$ and
	$\hat{\tau}(E) = Ev$, and the associated primitive Nielsen paths $u'$
	and $v'$ do not generate isomorphic subgroups of $\pi_1(\Gamma,t(E))$.
	Since $v$ is not $\hat{\sigma}$-growing it is $\hat{\sigma}$-periodic,
	thus $[\hat{\sigma}(v)] = v$; similarly $[\hat{\tau}(u)] = u$. By
	hypothesis, $u_\ast,v_\ast \in \pi_1(\Gamma,t(E))$ generate a rank two
	free group $G$. Further, for $\omega\in\langle
	\hat{\sigma},\hat{\tau}\rangle$, $\omega(E) = Ew$ for some path $w$ so
	that $w_\ast \in \langle u_\ast,v_\ast\rangle$. It is immediate that
	$\omega \mapsto w_\ast$ is an isomorphism, hence $\langle\sigma,\tau\rangle
	\cong F_2$.

	In either case $\langle \sigma^3,\tau^3\rangle \cong F_2$ as required.
\end{proof}

\begin{corollary}
	Suppose $\tilde{\sigma},\tilde{\tau}$ are efficient Dehn twists
	satisfying every hypothesis of Lemma~\ref{lem:dt-pg-free} except the
	condition on their image in $GL(r,\mathbb{Z}/3\mathbb{Z})$. Then
	$\langle \sigma^9, \tau^9 \rangle \cong F_2$.
\end{corollary}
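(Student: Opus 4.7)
The plan is to pass to the cubes $\sigma^3$ and $\tau^3$ and apply Lemma~\ref{lem:dt-pg-free}: if one shows that for every Dehn twist $\sigma$ of $F_r$ the cube $\sigma^3$ lies in the kernel of $\Out(F_r)\to GL(r,\mathbb{Z}/3\mathbb{Z})$, then $\sigma^3$ and $\tau^3$ satisfy all hypotheses of Lemma~\ref{lem:dt-pg-free} and we conclude $\langle\sigma^9,\tau^9\rangle=\langle(\sigma^3)^3,(\tau^3)^3\rangle\cong F_2$.

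The main step is to show that the action of any Dehn twist $\sigma$ on $H_1(F_r,\mathbb{Z})$ has the form $I+N$ with $N^2=0$. I would use the graph of groups structure directly: let $\Gamma$ denote the underlying graph of $\bar{A}$, and let $V\subseteq H_1(F_r,\mathbb{Z})$ be the subgroup generated by the images of the vertex groups under inclusion $G_v\to F_r=\pi_1(\bar{A},v)$. The canonical surjection $F_r\to\pi_1(\Gamma,v)$ obtained by collapsing every vertex group induces a short exact sequence
\[
0\to V\to H_1(F_r,\mathbb{Z})\to H_1(\Gamma,\mathbb{Z})\to 0.
\]
By definition $\tilde{\sigma}$ fixes each vertex group element, so $\sigma_*$ is the identity on $V$. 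On the other hand $\tilde{\sigma}(e)=e\,\iota_e(z_e)$ and $\iota_e(z_e)$ lies in a vertex group; thus $\tilde{\sigma}$ descends to the identity on $\pi_1(\Gamma,v)$, so $\sigma_*$ is the identity on $H_1(\Gamma,\mathbb{Z})$. Combining these two facts, $N=\sigma_*-I$ sends $H_1(F_r,\mathbb{Z})$ into $V$ and vanishes on $V$, so $N^2=0$, and hence $\sigma_*^3=I+3N+3N^2+N^3=I+3N\equiv I\pmod 3$.

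With this in hand the remaining verifications are routine. The cube $\tilde{\sigma}^3$ is the efficient Dehn twist on $\bar{A}$ with twisters $\{z_e^3\}$: minimality, visibility, smallness, and the absence of trivial twisters are unchanged, and no positive bonding for $\{z_e^3\}$ follows from the corresponding condition for $\{z_e\}$ applied to exponents that are multiples of $3$. The Bass--Serre trees $A$ and $B$ are unchanged, and $\mathcal{ET}(\bar{A},\bar{B})$ depends only on the edge group generators of $\bar{A}$ and $\bar{B}$ rather than on the specific twist powers, so its acyclicity and the positivity $i(A,B)>0$ both persist. Thus Lemma~\ref{lem:dt-pg-free} applies to $\sigma^3$ and $\tau^3$, yielding $\langle\sigma^9,\tau^9\rangle\cong F_2$.

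The only substantive obstacle is the nilpotency claim $N^2=0$; once one identifies $V$ with the kernel of the collapse-vertex-groups map on $H_1$, the argument depends only on the two defining features of a Dehn twist (fixing vertex group elements and twisting edges by vertex group elements) and bypasses any finer bookkeeping about adapted bases or Tietze reductions.
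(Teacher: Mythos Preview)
Your proof is correct and follows the same strategy as the paper: pass to cubes and apply Lemma~\ref{lem:dt-pg-free}. The paper's argument is terser---it cites Cohen and Lustig for the unipotence of Dehn twists and then asserts that $\sigma^3,\tau^3$ have trivial image in $GL(r,\mathbb{Z}/3\mathbb{Z})$---whereas your direct computation that $(\sigma_*-I)^2=0$ via the two-step filtration $0\subset V\subset H_1(F_r,\mathbb{Z})$ makes this step self-contained, and in fact supplies a detail the paper leaves implicit: mere unipotence gives only $N^r=0$, so over $\mathbb{Z}/3\mathbb{Z}$ one gets $(I+N)^3=I+N^3$, which need not vanish for $r>3$; it is precisely the two-step structure of a Dehn twist on homology that forces $N^2=0$ and hence $\sigma_*^3\equiv I\pmod 3$.
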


\begin{proof}
	Since $\sigma$ and $\tau$ are Dehn twists, they are
	unipotent~\cite{cohen-lustig-ii}, and so $\sigma^3,\tau^3$ have trivial
	image in $GL(r,\mathbb{Z}/3\mathbb{Z})$.  Therefore, by the lemma
	$\langle (\sigma^3)^3, (\tau^3)^3\rangle \cong F_2$.
\end{proof}

The culmination of this effort is a proof of a uniform McCarthy-type theorem for
$\Out(F_n)$ in the linearly growing case.

\begin{theorem*}[\ref{thm:dt-out-mccarthy}]
	Suppose $\sigma$ and $\tau$ are linearly growing outer automorphisms of
	$F_r$.
	For $N=(48r^2-48r+3)|GL(r,\mathbb{Z}/3\mathbb{Z})|$ the subgroup
	$\langle \sigma^N,\tau^N\rangle$ is
	either abelian or free of rank two. Moreover, the latter case holds
	exactly when $i(A,B)>0$ for the Bass-Serre trees $A$ and $B$ of
	efficient representatives of Dehn-twist powers of $\sigma$ and $\tau$.
\end{theorem*}

\begin{proof}
	First, using train tracks Cohen and Lustig show that a unipotent
	linearly growing automorphism is a Dehn twist~\cite{cohen-lustig-ii}.
	Bestvina, Feighn, and Handel~\cite{bfh-ii}*{Proposition 3.5} show that
	every polynomially growing outer automorphism with trivial image in
	$GL(r,\mathbb{Z}/3\mathbb{Z})$ is unipotent. Let $U =
	|GL(r,\mathbb{Z}/3\mathbb{Z})|$, so that $\sigma^U$ and $\tau^U$ are
	Dehn twists with trivial image in $GL(r, \mathbb{Z}/3\mathbb{Z})$ and
	efficient representatives on graphs of groups $\bar{A}$ and $\bar{B}$.
	If $i(A,B) > 0$ then by either Lemma \ref{lem:dt-cycle-free} or
	\ref{lem:dt-pg-free}, since $N = 48r^2-48r+3$ is divisible by $3$, the
	group $\langle \sigma^{UN},\tau^{UN} \rangle \cong F_2$; otherwise by
	Lemma \ref{lem:dt-com}, $\sigma^{U},\tau^{U}$ commute.
\end{proof}

\begin{acknowledgements}
I thank Marc Culler for his guidance in the completion of my
thesis. I am also grateful to my committee, Daniel Groves, Lee Mosher, Peter
Shalen, and Kevin Whyte, for their careful reading of this work and helpful
remarks. Samuel Taylor and David Futer's encouragement was most welcome in
achieving the uniform power. The anonymous referee's thorough remarks improved
the exposition, I very much appreciate their enthusiasm for examples. Finally,
throughout writing my thesis I had great conversations with members of the
geometric group theory community too numerous to list here.
\end{acknowledgements}

\begin{bibdiv}
	\begin{biblist}
\bib{althoen}{article}{
  author={Althoen, Steven C.},
  title={A geometrical realization of a construction of Bass and Serre},
  journal={J. Pure Appl. Algebra},
  volume={5},
  date={1974},
  pages={233--237},
  issn={0022-4049},
  review={\MR {0358754}},
  doi={10.1016/0022-4049(74)90034-6},
}

\bib{bbc}{article}{
  author={Behrstock, Jason},
  author={Bestvina, Mladen},
  author={Clay, Matt},
  title={Growth of intersection numbers for free group automorphisms},
  journal={J. Topol.},
  volume={3},
  date={2010},
  number={2},
  pages={280--310},
  issn={1753-8416},
  review={\MR {2651361}},
  doi={10.1112/jtopol/jtq008},
}

\bib{compat-trees}{article}{
  author={Bering IV, Edgar A.},
  title={Length Function Compatibility for Group Actions on Real Trees},
  date={2017},
  eprint={http://arxiv.org/abs/1708.07078v2},
  status={preprint},
}

\bib{bfh-trees}{article}{
  author={Bestvina, M.},
  author={Feighn, M.},
  author={Handel, M.},
  title={Laminations, trees, and irreducible automorphisms of free groups},
  journal={Geom. Funct. Anal.},
  volume={7},
  date={1997},
  number={2},
  pages={215--244},
  issn={1016-443X},
  review={\MR {1445386}},
  doi={10.1007/PL00001618},
}

\bib{bfh-i}{article}{
  author={Bestvina, M.},
  author={Feighn, M.},
  author={Handel, M.},
  title={The Tits alternative for $\mathrm {Out}(F_n)$. I. Dynamics of exponentially-growing automorphisms},
  journal={Ann. of Math. (2)},
  volume={151},
  date={2000},
  number={2},
  pages={517--623},
  issn={0003-486X},
  review={\MR {1765705}},
  doi={10.2307/121043},
}

\bib{bfh-ii}{article}{
  author={Bestvina, M.},
  author={Feighn, M.},
  author={Handel, M.},
  title={The Tits alternative for $\mathrm {Out}(F_n)$. II. A Kolchin type theorem},
  journal={Ann. of Math. (2)},
  volume={161},
  date={2005},
  number={1},
  pages={1--59},
  issn={0003-486X},
  review={\MR {2150382}},
  doi={10.4007/annals.2005.161.1},
}

\bib{outer-limits}{article}{
  author={Bestvina, Mladen},
  author={Feighn, Mark},
  title={Outer Limits},
  status={preprint},
  date={1992},
}

\bib{chipman}{article}{
  author={Chipman, J. C.},
  title={van Kampen's theorem for $n$-stage covers},
  journal={Trans. Amer. Math. Soc.},
  volume={192},
  date={1974},
  pages={357--370},
  issn={0002-9947},
  review={\MR {0339122}},
  doi={10.2307/1996841},
}

\bib{clay-pettet}{article}{
  author={Clay, Matt},
  author={Pettet, Alexandra},
  title={Twisting out fully irreducible automorphisms},
  journal={Geom. Funct. Anal.},
  volume={20},
  date={2010},
  number={3},
  pages={657--689},
  issn={1016-443X},
  review={\MR {2720227}},
  doi={10.1007/s00039-010-0072-9},
}

\bib{cohen-lustig}{article}{
  author={Cohen, Marshall M.},
  author={Lustig, Martin},
  title={Very small group actions on $\mathbf {R}$-trees and Dehn twist automorphisms},
  journal={Topology},
  volume={34},
  date={1995},
  number={3},
  pages={575--617},
  issn={0040-9383},
  review={\MR {1341810}},
  doi={10.1016/0040-9383(94)00038-M},
}

\bib{cohen-lustig-ii}{article}{
  author={Cohen, Marshall M.},
  author={Lustig, Martin},
  title={The conjugacy problem for Dehn twist automorphisms of free groups},
  journal={Comment. Math. Helv.},
  volume={74},
  date={1999},
  number={2},
  pages={179--200},
  issn={0010-2571},
  review={\MR {1691946}},
  doi={10.1007/s000140050085},
}

\bib{cooper}{article}{
  author={Cooper, Daryl},
  title={Automorphisms of free groups have finitely generated fixed point sets},
  journal={J. Algebra},
  volume={111},
  date={1987},
  number={2},
  pages={453--456},
  issn={0021-8693},
  review={\MR {916179}},
  doi={10.1016/0021-8693(87)90229-8},
}

\bib{culler-morgan}{article}{
  author={Culler, Marc},
  author={Morgan, John W.},
  title={Group actions on $\mathbf {R}$-trees},
  journal={Proc. London Math. Soc. (3)},
  volume={55},
  date={1987},
  number={3},
  pages={571--604},
  issn={0024-6115},
  review={\MR {907233}},
  doi={10.1112/plms/s3-55.3.571},
}

\bib{fujiwara}{article}{
  author={Fujiwara, Koji},
  title={Subgroups generated by two pseudo-Anosov elements in a mapping class group. I. Uniform exponential growth},
  conference={ title={Groups of diffeomorphisms}, },
  book={ series={Adv. Stud. Pure Math.}, volume={52}, publisher={Math. Soc. Japan, Tokyo}, },
  date={2008},
  pages={283--296},
  review={\MR {2509713}},
}

\bib{ghosh}{article}{
  author={Ghosh, Pritam},
  title={Applications of weak attraction theory in $\mathrm {Out}(F_n)$},
  status={preprint},
  eprint={https://arxiv.org/abs/1306.6049},
  year={2013},
}

\bib{guirardel-core}{article}{
  author={Guirardel, Vincent},
  title={C\oe ur et nombre d'intersection pour les actions de groupes sur les arbres},
  language={French, with English and French summaries},
  journal={Ann. Sci. \'Ecole Norm. Sup. (4)},
  volume={38},
  date={2005},
  number={6},
  pages={847--888},
  issn={0012-9593},
  review={\MR {2216833}},
  doi={10.1016/j.ansens.2005.11.001},
}

\bib{gultepe}{article}{
  author={G\"ultepe, Funda},
  title={Fully irreducible automorphisms of the free group via Dehn twisting in $\sharp _k(S^2\times S^1)$},
  journal={Algebr. Geom. Topol.},
  volume={17},
  date={2017},
  number={3},
  pages={1375--1405},
  issn={1472-2747},
  review={\MR {3677931}},
  doi={10.2140/agt.2017.17.1375},
}

\bib{hamidi-tehrani}{thesis}{
  author={Hamidi-Tehrani, Hessam},
  title={Algorithms in the surface mapping class groups},
  note={Thesis (Ph.D.)},
  organization={Columbia University},
  date={1997},
  pages={115},
  review={\MR {2695693}},
}

\bib{higgins}{article}{
  author={Higgins, P. J.},
  title={The fundamental groupoid of a graph of groups},
  journal={J. London Math. Soc. (2)},
  volume={13},
  date={1976},
  number={1},
  pages={145--149},
  issn={0024-6107},
  review={\MR {0401927}},
  doi={10.1112/jlms/s2-13.1.145},
}

\bib{ivanov}{article}{
  author={Ivanov, N. V.},
  title={Algebraic properties of the Teichm\"uller modular group},
  language={Russian},
  journal={Dokl. Akad. Nauk SSSR},
  volume={275},
  date={1984},
  number={4},
  pages={786--789},
  issn={0002-3264},
  review={\MR {745513}},
}

\bib{kapovich-lustig-bc}{article}{
  author={Kapovich, Ilya},
  author={Lustig, Martin},
  title={Geometric intersection number and analogues of the curve complex for free groups},
  journal={Geom. Topol.},
  volume={13},
  date={2009},
  number={3},
  pages={1805--1833},
  issn={1465-3060},
  review={\MR {2496058}},
  doi={10.2140/gt.2009.13.1805},
}

\bib{kapovich-lustig}{article}{
  author={Kapovich, Ilya},
  author={Lustig, Martin},
  title={Ping-pong and outer space},
  journal={J. Topol. Anal.},
  volume={2},
  date={2010},
  number={2},
  pages={173--201},
  issn={1793-5253},
  review={\MR {2652906}},
  doi={10.1142/S1793525310000318},
}

\bib{kolchin}{article}{
  author={Kolchin, E. R.},
  title={Algebraic matric groups and the Picard-Vessiot theory of homogeneous linear ordinary differential equations},
  journal={Ann. of Math. (2)},
  volume={49},
  date={1948},
  pages={1--42},
  issn={0003-486X},
  review={\MR {0024884}},
  doi={10.2307/1969111},
}

\bib{mangahas}{article}{
  author={Mangahas, Johanna},
  title={Uniform uniform exponential growth of subgroups of the mapping class group},
  journal={Geom. Funct. Anal.},
  volume={19},
  date={2010},
  number={5},
  pages={1468--1480},
  issn={1016-443X},
  review={\MR {2585580}},
  doi={10.1007/s00039-009-0038-y},
}

\bib{margalit-schleimer}{article}{
  author={Margalit, Dan},
  author={Schleimer, Saul},
  title={Dehn twists have roots},
  journal={Geom. Topol.},
  volume={13},
  date={2009},
  number={3},
  pages={1495--1497},
  issn={1465-3060},
  review={\MR {2496051}},
  doi={10.2140/gt.2009.13.1495},
}

\bib{mccarthy}{article}{
  author={McCarthy, John},
  title={A ``Tits-alternative'' for subgroups of surface mapping class groups},
  journal={Trans. Amer. Math. Soc.},
  volume={291},
  date={1985},
  number={2},
  pages={583--612},
  issn={0002-9947},
  review={\MR {800253}},
  doi={10.2307/2000100},
}

\bib{paulin}{article}{
  author={Paulin, Fr\'ed\'eric},
  title={Sur les automorphismes de groupes libres et de groupes de surface},
  language={French, with French summary},
  note={S\'eminaire Bourbaki. Vol. 2009/2010. Expos\'es 1012--1026},
  journal={Ast\'erisque},
  number={339},
  date={2011},
  pages={Exp. No. 1023, ix, 323--356},
  issn={0303-1179},
  isbn={978-2-85629-326-3},
  review={\MR {2906359}},
}

\bib{scott-wall}{article}{
  author={Scott, Peter},
  author={Wall, Terry},
  title={Topological methods in group theory},
  conference={ title={Homological group theory}, address={Proc. Sympos., Durham}, date={1977}, },
  book={ series={London Math. Soc. Lecture Note Ser.}, volume={36}, publisher={Cambridge Univ. Press, Cambridge-New York}, },
  date={1979},
  pages={137--203},
  review={\MR {564422}},
}

\bib{trees}{book}{
  author={Serre, Jean-Pierre},
  title={Trees},
  series={Springer Monographs in Mathematics},
  note={Translated from the French original by John Stillwell; Corrected 2nd printing of the 1980 English translation},
  publisher={Springer-Verlag, Berlin},
  date={2003},
  pages={x+142},
  isbn={3-540-44237-5},
  review={\MR {1954121}},
}

\bib{stallings-folds}{article}{
  author={Stallings, John R.},
  title={Topology of finite graphs},
  journal={Invent. Math.},
  volume={71},
  date={1983},
  number={3},
  pages={551--565},
  issn={0020-9910},
  review={\MR {695906}},
  doi={10.1007/BF02095993},
}

\bib{taylor-out}{article}{
  author={Taylor, Samuel J.},
  title={Right-angled Artin groups and $\mathrm {Out}(\mathbb {F}_n)$ I. Quasi-isometric embeddings},
  journal={Groups Geom. Dyn.},
  volume={9},
  date={2015},
  number={1},
  pages={275--316},
  issn={1661-7207},
  review={\MR {3343354}},
  doi={10.4171/GGD/313},
}

\bib{tretkoff}{article}{
  author={Tretkoff, Marvin D.},
  title={A topological approach to the theory of groups acting on trees},
  journal={J. Pure Appl. Algebra},
  volume={16},
  date={1980},
  number={3},
  pages={323--333},
  issn={0022-4049},
  review={\MR {558496}},
  doi={10.1016/0022-4049(80)90037-7},
}
	\end{biblist}
\end{bibdiv}

\affiliationone{	Edgar A. Bering IV \\
	Department of Mathematics, Wachman Hall, Temple University, 1805 Broad
Street, Philadelphia, PA 19122 \\
U. S. A.
\email{edgar.bering@temple.edu}}

\end{document}